\numberwithin{equation}{section}
\tikzset{Equal/.style={-,double line with arrow={-,-}}} 
\tikzset{double line with arrow/.style args={#1,#2}{decorate,decoration={markings,
mark=at position 0 with {\coordinate (ta-base-1) at (0,1pt);
\coordinate (ta-base-2) at (0,-1pt);},
mark=at position 1 with {\draw[#1] (ta-base-1) -- (0,1pt);
\draw[#2] (ta-base-2) -- (0,-1pt);
}}}}
\newtheoremstyle{note}
{\topsep}
{\topsep}
{}
{0pt}
{\bfseries}
{.}
{0.5em }
{}
\theoremstyle{plain}
\newtheorem{Thm}{Theorem}[section]
\newtheorem*{Thm*}{Theorem}
\newaliascnt{prop}{Thm}
\newtheorem{Prop}[prop]{Proposition}
\newaliascnt{lemma}{Thm}
\newtheorem{Lemma}[lemma]{Lemma}
\newaliascnt{coro}{Thm}
\newtheorem{Coro}[coro]{Corollary}
\newaliascnt{conjecture}{Thm}
\newtheorem{Lemma*}{Lemma}
\newtheorem*{Prop*}{Proposition}
\newtheorem*{Coro*}{Corollary}
\theoremstyle{definition}
\newaliascnt{def}{Thm}
\newtheorem{Def}[def]{Definition}
\newtheorem*{Def*}{Definition}
\newaliascnt{Eg}{Thm}
\newtheorem{eg}[Eg]{Example}
\newtheorem*{eg*}{Example}
\theoremstyle{remark}
\newaliascnt{rmk}{Thm}
\newtheorem{RMK}[rmk]{Remark}
\newtheorem*{RMK*}{Remark}
\theoremstyle{plain}
\newtheorem{iThm}{Theorem}[section]
\newtheorem*{iThm*}{Theorem}
\newaliascnt{iprop}{iThm}
\newtheorem{iconjecture}[iThm]{Conjecture}
\newaliascnt{iconjecture}{iThm}
\newtheorem{iConjecture}[iconjecture]{Conjecture}
\theoremstyle{definition}
\newaliascnt{idef}{iThm}
\newtheorem*{iDef*}{Definition}
\newaliascnt{iEg}{iThm}
\newtheorem*{ieg*}{Example}
\theoremstyle{remark}
\newaliascnt{irmk}{iThm}
\newtheorem{iRMK}[irmk]{Remark}
\newtheorem*{iRMK*}{Remark}
\newcommand{\bC} { {\mathbb{C}}}
\newcommand{\bF} { {\mathbb{F}}}
\newcommand{\bZ} { {\mathbb{Z}}}
\newcommand{\bR} { {\mathbb{R}}}
\newcommand{\bK} { {\mathbb{K}}}
\newcommand{\bN} { {\mathbb{N}}}
\newcommand{\bO} { {\mathbb{N}_{\geq 2}}}
\newcommand{\cD} { {\mathcal{D}}}
\newcommand{\cK} { {\mathcal{K}}}
\newcommand{\sC} { {\mathscr{C}}}
\newcommand{\sP} { {\mathscr{P}}}
\newcommand{\sQ} { {\mathscr{Q}}}
\newcommand{\supp}{{\textnormal{supp}}}
\newcommand{\RHOM}{{ \textnormal{RHom}  }}
\newcommand{\HOM}{{ \textnormal{Hom}  }}
\newcommand{\RHHOM}{{ \textnormal{R}\mathcal{H}om  }}
\newcommand{\dotimes}{{\overset{L}\otimes }}
\newcommand{\dboxtimes}{{\overset{L}\boxtimes }}
\newcommand{\pt}{{\textnormal{pt}}}
\newcommand{\bq} { {\textbf{q}}}
\newcommand{\bp} { {\textbf{p}}}
\newcommand{\bx} { {\textbf{x}}}
\newcommand{\tnT}{{\textnormal{T}}}
\newcommand{\tnR}{{\textnormal{R}}}
\newcommand{\id}{{\textnormal{Id}}}
\newcommand{\Mod}{{\textnormal{Mod}}}
\newcommand\boxstar{\mathrel{\boxcls@{\star}}}
\newcommand{\boxcls@}[1]{%
  \vphantom{\Box}
  \mathpalette\boxcls@@{#1}%
}
\newcommand{\boxcls@@}[2]{%
  \ooalign{$\m@th#1\Box$\cr
  \hidewidth\boxcls@fix{#1}\hbox{$\m@th#1#2\mkern 2.4mu$}\cr}}
\newcommand\boxcls@fix[1]{%
  \ifx#1\displaystyle
    \raise.225ex
  \else
    \ifx#1\textstyle
      \raise.225ex
    \else
      \ifx#1\scriptstyle
        \raise.180ex
      \else
        \raise.150ex
      \fi
    \fi
  \fi
}
\title{Capacities from the Chiu-Tamarkin complex}
\author{Bingyu Zhang}
\date{\today}
\begin{document}
\dedicatory{To Claude Viterbo on the Occasion of his Sixtieth Birthday}
\maketitle

\begin{abstract}
    In this paper, we construct a sequence $(c_k)_{k\in\bN}$ of symplectic capacities based on the Chiu-Tamarkin complex $C^{\bZ/\ell}_T$, a $\bZ/\ell$-equivariant invariant coming from the microlocal theory of sheaves. We compute $(c_k)_{k\in\bN}$ for convex toric domains, which are the same as the Gutt-Hutchings capacities. Our method also works for the prequantized contact manifold $T^*X\times S^1$. We define a sequence of ``contact capacities" $([c]_k)_{k\in\bN}$ on the prequantized contact manifold $T^*X\times S^1$, and we compute them for prequantized convex toric domains.
\end{abstract}

\section{Introduction}

\subsection{Symplectic Embedding}

A symplectic manifold $(X,\omega)$ is a manifold with a non-degenerate closed $2$-form $\omega$. Classically it appears naturally as phase spaces in Hamiltonian Mechanics. An embedding $\varphi:(X,\omega) \hookrightarrow (X',\omega')$ is called symplectic if $\varphi^*\omega'=\omega$. A basic question in symplectic geometry is to decide when a symplectic embedding between two symplectic manifolds exists. The first result, the origin of the question, is the Gromov non-squeezing theorem:
\begin{iThm}[\cite{gromov1985}]Equip $\bR^{2d}$ with the linear symplectic form. Let $B_{\pi r^2}=\{(x,p)\in \bR^{2d} : \|x \|^2+\|p\|^2 < r^2\}$, and $Z_{\pi R^2}=\{(x,p)\in \bR^{2d} : x_1^2+p_1^2< R^2 \}$. 

If there is a symplectic embedding $\varphi: B_{\pi r^2}\rightarrow \bR^{2d}$ such that $\varphi(B_{\pi r^2})\subset Z_{\pi R^2}$, then $r\leq R$.
\end{iThm}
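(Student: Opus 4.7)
The plan is to construct a monotone symplectic capacity $c$ that separates the ball from the cylinder, and then to evaluate $c$ on both. Recall that a \emph{symplectic capacity} assigns to every open subset $U\subset\bR^{2d}$ a value $c(U)\in[0,\infty]$ satisfying the monotonicity axiom: if there is a symplectic embedding $\varphi\colon U\hookrightarrow V$, then $c(U)\leq c(V)$. Granted such a $c$ together with the two evaluations $c(B_{\pi r^2})=\pi r^2$ and $c(Z_{\pi R^2})=\pi R^2$, a hypothetical embedding $B_{\pi r^2}\hookrightarrow Z_{\pi R^2}$ forces $\pi r^2\leq\pi R^2$, giving $r\leq R$.

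In the language of the present paper I would take $c=c_1$, the first capacity built from the Chiu-Tamarkin complex $C_{T,\ell}$. The ball $B_{\pi r^2}$ is a convex toric domain, so the agreement of $(c_k)_{k\geq 1}$ with the Gutt-Hutchings capacities announced in the abstract yields $c_1(B_{\pi r^2})=\pi r^2$. For the cylinder, the lower bound $c_1(Z_{\pi R^2})\geq \pi r^2$ for every $r<R$ follows by exhausting $Z_{\pi R^2}$ from inside by bounded convex toric domains, for instance polydisks of the shape $P(\pi r^2,\pi N^2,\ldots,\pi N^2)$ with $r<R$ and $N$ large, each of which is a convex toric domain with $c_1=\pi r^2$. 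The matching upper bound $c_1(Z_{\pi R^2})\leq\pi R^2$ is the substantive part and is essentially the analytic content of non-squeezing; I would derive it directly from the sheaf-theoretic definition by computing $C_{T,\ell}$ on the cylinder and observing that the first spectral jump occurs at $T=\pi R^2$, corresponding to the shortest closed characteristic on the boundary circle.

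The main obstacle is establishing the monotonicity of $c_1$ under arbitrary symplectic embeddings $U\hookrightarrow V$. The Chiu-Tamarkin complex is microlocal in nature, so its functoriality is transparent under Hamiltonian isotopies of a fixed ambient space, but a general symplectic embedding between different open subsets requires more. Concretely I would expect to sheaf-quantize $\varphi$ as a kernel whose singular support carries the graph of $\varphi$, and to convolve with this kernel to produce a morphism relating $C_{T,\ell}(V)$ to $C_{T,\ell}(U)$ that respects the filtration by the action parameter $T$. Controlling the microsupport of this kernel, and verifying the spectral inequality it induces on $c_1$, is where the technical weight of the argument will sit and is precisely what the body of the paper is built to handle; the non-squeezing theorem then drops out as the first sanity check of the machinery.
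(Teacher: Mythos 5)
This theorem is quoted in the paper as a classical result of Gromov; the paper offers no proof of it, so your proposal can only be judged on its own terms. The overall strategy --- produce a monotone capacity, evaluate it on the ball and the cylinder, and compare --- is the standard one, and the evaluations are indeed available from the paper's machinery: both $B_{\pi r^2}$ and $Z_{\pi R^2}$ are convex toric domains, and the formula $c_k(X_\Omega)=\inf\{T\geq 0:\exists z\in\Omega^\circ_T,\ I(z)\geq k\}$ gives $c_1(B_{\pi r^2})=\pi r^2$ and $c_1(Z_{\pi R^2})=\pi R^2$ directly; your exhaustion by polydisks and your separate microlocal computation of the ``first spectral jump'' on the cylinder are both unnecessary detours.

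The genuine gap is the step you yourself flag as the main obstacle, and your proposed resolution does not close it. The paper establishes only two forms of monotonicity: $c_k(U_1)\leq c_k(U_2)$ for an \emph{inclusion} of admissible open sets, and $c_k(U)=c_k(\varphi_z(U))$ for a \emph{compactly supported Hamiltonian isotopy of the ambient} $\bR^{2d}$. A general symplectic embedding $\varphi\colon B_{\pi r^2}\hookrightarrow Z_{\pi R^2}$ is neither, and sheaf-quantizing $\varphi$ itself as a kernel is not something the paper's framework provides (the Guillermou--Kashiwara--Schapira quantization used throughout requires an ambient, compactly supported Hamiltonian isotopy, not an embedding between different domains); developing such a quantization would be a substantial new construction, not a routine verification. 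The standard and much cheaper bridge is the extension lemma: for any $r'<r$, the restriction of $\varphi$ to $\overline{B_{\pi r'^2}}$ extends to a compactly supported Hamiltonian diffeomorphism $\psi$ of $\bR^{2d}$. Then $c_1(B_{\pi r'^2})=c_1(\psi(B_{\pi r'^2}))\leq c_1(Z_{\pi R^2})$ by the two monotonicity properties the paper does prove, and letting $r'\to r$ gives $\pi r^2\leq\pi R^2$. Without either this extension argument or a worked-out quantization of embeddings, your proof is incomplete at precisely the step where all the content lies.
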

A structure related to the embedding question is the so-called symplectic capacity. One example is the Gromov width, which is hard to compute. There are other capacities defined by generating functions \cite{viterbo1992symplectic}, Hamiltonian dynamics \cite{EkelandHoferII}, and $J$-holomorphic curves \cite{ECHhutchings2011,gutthutchings2018capacities, SiegelHigherCapacities}. A great survey about symplectic capacities is \cite{quantitivesymplectic.2010}. When the dimension is 4, the ECH capacity is a very effective tool. When the dimension is greater than 4, we know fewer results.

The Ekeland-Hofer capacity $(c^{\text{EH}}_k)_{k\in\bN}$ is a sequence of symplectic capacities defined for compact star-shaped domains in $T^*\bR^{d}$ for all $d$, which is defined using Hamiltonian dynamics. The computation of $c^{\text{EH}}_k$ is known for ellipsoids and poly-disks, say:
\begin{align*}
    &c_k^{\text{EH}}(E(a_1,\dots,a_d))=\min\left\{T: \sum_{i=1}^d \big\lfloor {\frac{T}{a_i}} \big\rfloor \geq k\right\}\qquad
    c_k^{\text{EH}}(D(a_1,\dots,a_d))=ka_1,
\end{align*}
where 
\begin{align*}
    &E(a_1,\dots,a_d)=\left\lbrace u\in \bC^d:\sum_{i=1}^d \frac{\pi|u_i|^2}{a_i} < 1 \right\rbrace,\\
    &D(a_1,\dots,a_d)=\left\lbrace u\in \bC^d:{\pi|u_i|^2} < a_i, \forall i=1,\dots,d \right\rbrace,\\
    &\text{with }0<a_1\leq \cdots \leq a_d.
\end{align*}
On the other hand, Gutt and Hutchings constructed a sequence of capacities $(c_k^{
\text{GH}})_{k\in\bN}$ in \cite{gutthutchings2018capacities} using the positive $S^1$-equivariant symplectic homology. For an open set $\Omega \subset \bR^d$, the open set \[X_{\Omega}=\left\{u\in \bC^d: (\pi|u_1|^2,\dots,\pi|u_d|^2)\in \Omega\right\}\]
is called a toric domain. We say $X_{\Omega}$ is convex if $\widehat{\Omega}=\left\{(x_1,\dots,x_d): (|x_1|,\dots,|x_d|)\in \Omega\right\}$ is convex, and is concave if $\bR^d_{\geq 0}\setminus \Omega$ is convex. The toric domain $X_\Omega$ is determined by $\Omega\cap\bR^d_{ \geq 0}$. So it is free to choose a suitable $\Omega$. In particular, we always assume
$ \bR^d_{ \leq 0} \subset\Omega.$
If $X_{\Omega}$ is a convex or a concave toric domain, one can indeed take $\Omega$ to be convex or concave (in the usual sense) and satisfying the condition $ \bR^d_{ \leq 0} \subset\Omega$. For example, ellipsoids $E(a)=X_{\Omega_{E(a)}}$ and poly-disks $D(a)=X_{\Omega_{D(a)}}$ are convex toric domains, where
\[\Omega_{E(a)}=\left\{(x_1,\dots,x_d):\sum_{i=1}^d \frac{x_i}{a_i} < 1\right\},\quad \Omega_{D(a)}=\left\{(x_1,\dots,x_d):x_i < a_i,\,\forall i=1,\dots,d\right\}.\]
Gutt and Hutchings computed $c_k^{
\text{GH}}$ for both convex and concave toric domains. For example, when $X_{\Omega}$ is convex, they showed that 
\begin{equation}
  c_k^{\text{GH}}(X_{\Omega})=\min\left\{\|v\|_\Omega^*: v\in \mathbb{N}^d, \sum_{i=1}^dv_i=k\right\}=\inf\left\{T\geq 0:\exists  {z}\in \Omega_T^\circ, I( {z})\geq k\right\} \label{capacities of convex toric domains},
\end{equation}
where
\begin{align}\label{convex toric domain notation}
\begin{aligned}
&\|v\|_\Omega^*=\max\{\langle v,w\rangle: w\in \Omega\},\\
&\Omega_T^\circ=\{z\in \bR^d: T+\langle z, \zeta \rangle \geq 0, \forall \zeta \in \Omega\},\quad I(z)=\sum_{i=1}^d \big\lfloor {-z_i} \big\rfloor. 
\end{aligned}
\end{align}
So one can observe that for ellipsoids and poly-disks, $c_k^{
\text{GH}}=c_k^{\text{EH}}$.

Unfortunately, even for ellipsoids, we know that the obstructions, given by Ekeland-Hofer capacities, and then the Gutt-Hutchings capacities, are not sharp. One new progress on higher dimensional embeddings is given by Siegel in \cite{SiegelHigherCapacities, CompSiegelHigherCapacities}. Siegel gave sharp obstructions for embeddings between some stabilized ellipsoids. 

In this paper, we construct a sequence of symplectic capacities $(c_k)_{k\in\bN}$ for open sets in a cotangent bundle $T^*X$ with an orientable base $X$. We denote $ \text{Open}(T^*X)$ the set of open sets in $T^*X$. Our main ingredient is the complex $C^{\bZ/\ell}_T(U,\bK)$ defined by Tamarkin and Chiu in \cite{mc-Tamarkin,chiu2017}, where $U$ is admissible (for example bounded open sets), $T\geq 0$, and $\ell\in \bO$. There exists a structure of $\bK[u]$-module on $H^*C^{\bZ/\ell}_T(U,\bK)$, and a fundamental class $\eta^{\bZ/\ell}_T(U,\bK) \in H^0C^{\bZ/\ell}_T(U,\bK) $. For admissible open sets $U$, we define (see \autoref{definition of capacities})
\begin{equation*} \textnormal{Spec}(U,k) \coloneqq
\left\lbrace
  T \geq 0:\begin{aligned} &\exists p\text{ prime such that }\forall\ell \in \bO, \, p_\ell \geq p,\, \\&\eta^{\bZ/\ell}_T(U,\bF_{p_\ell}) \in u^kH^{*}C^{\bZ/\ell}_T(U,\bF_{p_\ell})
  \end{aligned}
\right\rbrace,
\end{equation*}
and
\[c_k(U)\coloneqq \inf \textnormal{Spec}(U,k) \in [0,+\infty].\]For a general open set $U$, we define
$c_k(U)=\sup\{c_k(O): O\subset U,\,O\text{ is admissible}\}$.
Then we show
\begin{iThm}[\autoref{capacity property symplectic}] The functions $c_k:\text{Open}(T^*X)\rightarrow [0,\infty]$ satisfy the following:
\begin{enumerate}[fullwidth]
    \item $c_k \leq c_{k+1}$ for all $k\in \bN$.
    \item For two open sets $U_1 \subset U_2$, we have $c_k(U_1) \leq c_k(U_2)$.
    \item For a compactly supported Hamiltonian isotopy $\varphi:I\times T^*X \rightarrow T^*X$, we have 
    $c_k(U)=c_k(\varphi_z(U)).$ 
       \item If $X=\bR^d$, then $c_k(rU)=r^2c_k(U)$ for all $k\in \bN$ and $r>0$.
    \item Suppose $U=\{H<1\}$ is admissible such that $\partial U=\{H=1\}$ is a non-degenerated hypersurface of restricted contact type defined by a Hamiltonian function $H$. If $c_k(U) < \infty $, then $c_k(U)$ is represented by the action of a closed characteristic in the boundary  $\partial U$.
    
    \item $c_k(U)>0$ for all open sets $U$.
\end{enumerate}
\end{iThm}
Moreover, based on the structural theorem (\autoref{structure toric domain module}) of $H^*C^{\bZ/\ell}_T(X_{\Omega},\bF_{p_\ell})$, where $X_{\Omega}$ is a convex toric domain, we can compute $c_k(X_{\Omega})$ as follows: \begin{equation}c_k(X_{\Omega})=\inf\left\{T\geq 0:\exists  {z}\in \Omega_T^\circ, I( {z})\geq k\right\}.\label{capacities of convex toric domains2}\end{equation}
Therefore, $c_k(X_{\Omega})=c_k^{\text{GH}}(X_{\Omega})$ by \eqref{capacities of convex toric domains} and \eqref{capacities of convex toric domains2}. 
 
On the other hand, one may ask the concave case. It is explained in \autoref{concave} that some technical issues exist. So we cannot derive a clear structure theorem as \autoref{structure toric domain module}, and then the computation of capacities is not completely clear. However manual computation of some examples shows the coincidence with Gutt-Hutchings capacities is still true.

Based on the computation on the convex toric domains and concave toric domains, Gutt and Hutchings conjectured (\cite[Conjecture 1.9]{gutthutchings2018capacities}) that, for a bounded star-shaped domain $U$ and for all $k\in \bN$,\[c_k^{\text{EH}}(U)=c_k^{\text{GH}}(U).\] 
 In fact, the result $c_1^{\text{EH}}(U)=c_1^{\text{GH}}(U)=\text{Minimal action}$ has been proven by Irie\cite{irie2019symplectic} for convex body $U$. Comparing to our result, we hope the consistency could be extended to $c_k$ as well.
\begin{iConjecture}\label{conjecture}For a bounded star-shaped domain $U\subset \bR^{2d}$ and for all $k\in \bN$, there is\[c_k^{\textnormal{EH}}(U)=c_k^{\textnormal{GH}}(U)=c_k(U).\] 
\end{iConjecture}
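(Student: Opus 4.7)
The plan is to prove the full conjecture $c_k^{\text{EH}}(U)=c_k^{\text{GH}}(U)=c_k(U)$ by a two-step strategy: first establish $c_k=c_k^{\text{GH}}$ for all bounded star-shaped domains via a sheaf-theoretic-to-Floer-theoretic comparison that extends the convex toric identification \eqref{capacities of convex toric domains2}, and then attack the Gutt-Hutchings equality $c_k^{\text{EH}}=c_k^{\text{GH}}$ by upgrading Irie's $k=1$ argument \cite{irie2019symplectic} and combining it with the common action-spectrum characterization of all three capacities. By point (4) of the preceding theorem, $c_k(U)$ on an admissible domain with non-degenerate boundary is the action of a closed characteristic on $\partial U$, and analogous facts hold for $c_k^{\text{GH}}$ and $c_k^{\text{EH}}$; the task therefore reduces to matching Reeb orbits together with the cohomological level at which they appear on each side.

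For $c_k(U)=c_k^{\text{GH}}(U)$, I would construct a quasi-isomorphism between a suitable homotopy colimit over $\ell\in\bO$ of the Chiu-Tamarkin complexes $C_{T,\ell}(U,\bK)$ and the filtered positive $S^1$-equivariant symplectic cohomology of $U$, in the spirit of the Guillermou-Kashiwara-Schapira sheaf quantization that identifies sheaves with prescribed microsupport with Lagrangian branes. Three intermediate steps are essential: show that the $\bZ/\ell\bZ$-actions assemble in the limit into the Floer-theoretic $u$-action; identify the distinguished class $\eta_{\ell,T}$ with the image of the symplectic-cohomology unit under the $T$-continuation map; and check that the two $u$-actions intertwine through the comparison. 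Granting this, the spectral definitions of $c_k$ and $c_k^{\text{GH}}$ automatically coincide, extending \eqref{capacities of convex toric domains2} from convex toric domains to all bounded star-shaped admissible domains.

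For $c_k^{\text{EH}}(U)=c_k^{\text{GH}}(U)$, I would first handle convex bodies by upgrading Irie's method from $k=1$ to all $k$. The goal is to realize both capacities as the $k$-th min-max critical value of the Clarke dual action functional on the loop space, which requires a filtered chain-level comparison between Ekeland-Hofer's Galerkin-type generating function model and the $S^1$-equivariant Floer complex computing $c_k^{\text{GH}}$, together with a precise tracking of the index at which a new Reeb orbit enters on each side. For general bounded star-shaped $U$ (not necessarily convex), I would approximate from inside and outside by smooth domains with non-degenerate boundary, invoke the action-spectrum representation for all three capacities, and exploit $C^0$-continuity (known on the Ekeland-Hofer and Gutt-Hutchings sides, and for $c_k$ a consequence of part (4) combined with the monotonicity of the Chiu-Tamarkin complex under inclusions of admissible sets) to pass to the limit orbit-by-orbit.

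The main obstacle is unquestionably the Gutt-Hutchings half $c_k^{\text{EH}}=c_k^{\text{GH}}$ for $k\geq 2$: Irie's argument exploits features specific to the ground-state action, and no chain-level identification between Ekeland-Hofer's Galerkin min-max and the filtered positive $S^1$-equivariant symplectic homology currently exists in the literature; constructing such an identification is essentially the Hamiltonian-side content of the conjecture itself. On the sheaf-theoretic side, the hardest point in $c_k=c_k^{\text{GH}}$ is verifying that the $u$-action coming from the $\bZ/\ell\bZ$-equivariant Chiu-Tamarkin model truly matches the BV-type $u$-action on $S^1$-equivariant symplectic homology, which I expect to demand a genuinely integrable sheaf-theoretic model of the circle action beyond the $p$-cyclic approximations used here; this is the central analytical commitment of the plan, and any argument that stops short of it yields only a partial result rather than the full three-way equality claimed in the conjecture.
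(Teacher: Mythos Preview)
The statement you are attempting to prove is explicitly labeled a \emph{Conjecture} in the paper, and the paper does \emph{not} prove it. Immediately after stating it, the authors write ``Finally, let us describe some evidence about the conjecture'' and then discuss heuristics (the uniqueness of the projector $P_U$, the GKS quantization being dynamically natural, and Viterbo's Floer-theoretic sheaf quantization) as circumstantial support. There is therefore no ``paper's own proof'' to compare against; your proposal is a research program for an open problem, not a proof that can be checked against an existing argument.

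On the substance of your outline: you are candid that the two central steps are not currently available. You write that ``no chain-level identification between Ekeland-Hofer's Galerkin min-max and the filtered positive $S^1$-equivariant symplectic homology currently exists in the literature; constructing such an identification is essentially the Hamiltonian-side content of the conjecture itself,'' and similarly that matching the $\bZ/\ell\bZ$-equivariant $u$-action with the Floer-theoretic $u$-action ``demands a genuinely integrable sheaf-theoretic model of the circle action beyond the $p$-cyclic approximations used here.'' These admissions are accurate, and they are precisely why the statement remains a conjecture: your Steps~1 and~2 are not proofs but wish-lists of theorems that would, if established, resolve the problem. In particular, the comparison $c_k = c_k^{\text{GH}}$ via a quasi-isomorphism between a colimit of $C_{T,\ell}$ and filtered $S^1$-equivariant symplectic cohomology is exactly the kind of bridge the paper's closing paragraph gestures at but does not construct, and the $c_k^{\text{EH}} = c_k^{\text{GH}}$ direction for $k\geq 2$ is the open Gutt--Hutchings conjecture \cite[Conjecture~1.9]{gutthutchings2018capacities}. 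What you have written is a reasonable sketch of how one might \emph{approach} the conjecture, aligned with the paper's own heuristic discussion, but it should not be presented as a proof.
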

\begin{iRMK}Recently, Jean Gutt and Vinicius G. B. Ramos claim that they prove $c_k^{\text{EH}}(U)=c_k^{\text{GH}}(U)$ for star-shaped domain $U\subset \bR^{2d}$.
\end{iRMK}

\subsection{Contact Embedding}
Contact geometry is the odd-dimensional cousin of symplectic geometry. A (co-oriented) contact manifold $(X,\alpha)$ consists of a manifold $X$ of dimension $2n+1$ and a $1$-form $\alpha$ such that $\alpha\wedge d\alpha^n \neq 0$. An embedding $\varphi: (X,\alpha) \hookrightarrow (X',\alpha')$ between two contact manifolds is called contact if $\varphi^*\alpha'=e^{f}\alpha$ for some function $f\in C^\infty(X)$. We also study the embedding question in contact geometry. The pioneering work of Eliashberg, Kim, and Polterovich\cite{eliashberg2006geometry} promote our understanding of the contact embedding question a lot. Let us explain here.

A naive attempt is to study the non-squeezing problem in the $1$-jet bundle $J^1\bR^d=T^*\bR^d\times \bR$ equipped with the contact form $\alpha=dt+\bq d\bp$. However the re-scaling map $(\bq,\bp,t)\mapsto (r\bq,r\bp,r^2t)$, which is a contactomorphism, squeezes any compact set into an arbitrary small neighborhood of the origin when $r$ is big enough. This conformal naturality of $1$-jet space illustrates us that we can study the prequantized space $T^*\bR^d\times S^1_\sigma$, where $S^1_\sigma$ is a circle. Here we equip $T^*\bR^d\times S^1_\sigma$ with a contact form $\alpha=d\sigma+\frac{1}{2}(\bq d\bp-\bp d\bq)$. However there is a global contactomorphism $F_N: T^*\bR^d\times S^1_\sigma\rightarrow T^*\bR^d\times S^1_\sigma$ defined as follows: We use complex coordinates $T^*\bR^d \cong \bC^d$, and then $F_N(z,\sigma)\coloneqq (\nu(\sigma)e^{2\pi N \sigma}z, \sigma)$, where $\nu(\sigma)=(1+N\pi|z|^2)^{-1/2}$.
One can compute directly that $F_N$ is still embedding any ball into arbitrary small neighborhood of $\{0\}\times S^1$ for $N$ big enough. However we notice that $F_N$ is not compactly supported. So loc. cit. proposed the following definition.
\begin{iDef*}{\cite[p1636]{eliashberg2006geometry}} Let $(W,\alpha)$ be a contact manifold. If $U_1,U_2 \subset W$ are two open subsets, we say that $U_1$ is squeezed into $U_2$ if there exists a compactly support contact isotopy $\varphi:[0,1]_s \times\overline{U_1}  \rightarrow W$ such that $\varphi_0=\textnormal{Id}$, and $\varphi_1(\overline{U_1}) \subset U_2$.
\end{iDef*}

An interesting phenomenon, which does not appear in the symplectic situation, is the scale of the ball will affect the validity of squeezing. About the large scale phenomenon, Eliashberg, Kim, and Polterovich give a very nice physical explanation using the quantization process. Two results about both squeezing and non-squeezing of prequantized balls $B_{\pi R^2}\times S^1 $ are:
\begin{iThm}
\begin{enumerate}[fullwidth]
\item{{\cite[Theorem 1.3]{eliashberg2006geometry}}} Suppose $d \geq 2$. Then for all $0 <\pi r^2, \pi R^2<1$, one can squeeze the prequantized ball $B_{\pi R^2}\times S^1 $ into $B_{\pi r^2}\times S^1 $ whatever the relation between $r$ and $R$ is.
\item{{\cite[Theorem 1.2]{eliashberg2006geometry}}} If there exists an integer $m\in[\pi r^2,\pi R^2]$, then $B_{\pi R^2}\times S^1$ cannot be squeezed into $B_{\pi r^2}\times S^1$.\end{enumerate}
\end{iThm}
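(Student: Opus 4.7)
The two parts require very different techniques: (1) is an existence statement (one must construct a contact isotopy), whereas (2) is a non-existence statement of precisely the type the contact capacities $([c]_k)_{k \geq 1}$ developed in this paper are designed to obstruct. My plan treats them separately.

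For part (1), the squeezing, I would give a direct geometric construction, following the strategy of Eliashberg-Kim-Polterovich, since capacity-style tools yield obstructions only. The decisive hypothesis is $\pi R^2 < 1$: it guarantees that the closed ball $\overline{B_{\pi R^2}}$ lifts injectively into a fundamental slab of the prequantization $\bR^{2d} \times \bR \to \bR^{2d} \times S^1$, with $z$-variation strictly less than one. Inside this slab one builds a contact Hamiltonian of the template form $h(\bq, \bp, \theta) = \rho(|\bq|^2 + |\bp|^2) g(\theta)$, with $g$ supported in a proper arc of $S^1$ and $\rho$ a radial cut-off profile, whose time-one map contracts $\overline{B_{\pi R^2}}$ inside $B_{\pi r^2}$; the bound $\pi R^2 < 1$ is exactly what ensures that the lifted trajectories do not collide with adjacent $\bZ$-translates, so the contact phase picked up by the flow can be absorbed. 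Since $h$ has compact support in the $\bR^{2d}$-direction, the resulting contact isotopy of $\bR^{2d} \times S^1$ is compactly supported.

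For part (2), the non-squeezing, I plan to invoke the monotonicity of the contact capacities $[c]_k$ under compactly supported contact isotopies. The main computational step is to evaluate $[c]_k(B_a \times S^1)$ via the contact analogue of \autoref{structure toric domian module}, since a prequantized ball is the simplest prequantized convex toric domain. I expect the distinguished class $\eta_{\ell, T}$ to become divisible by $u^k$ precisely when $T$ crosses a critical value of the form ``$k$-th integer multiple of $a$'', reflecting iterated Reeb orbits winding around the $S^1$-fiber. Granting such a formula, the hypothesis that an integer $m$ lies in $[\pi r^2, \pi R^2]$ produces an index $k$ with $[c]_k(B_{\pi R^2} \times S^1) > [c]_k(B_{\pi r^2} \times S^1)$, contradicting the monotonicity that a squeezing would impose.

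The main obstacle is thus the contact structure theorem for $C_{T, \ell}(B_a \times S^1, \bF_{p_\ell})$: one must identify its $\bK[u]$-module structure and track the $u^k$-divisibility of $\eta_{\ell, T}$. In the symplectic case the decomposition follows the Reeb spectrum of $\partial X_\Omega$; in the contact case the prequantization contributes an additional family of ``fiber'' Reeb orbits of integer period, and it is exactly these orbits that encode the integrality condition appearing in part (2). Arranging the $\bZ/\ell\bZ$-equivariant structure to mesh cleanly with the extra $S^1$-factor, and extracting the integer critical values of $T$ from the resulting module, is where the bulk of the technical work lies.
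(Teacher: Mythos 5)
This statement is not proved in the paper at all: it is quoted verbatim from Eliashberg--Kim--Polterovich as background in the introduction, so there is no internal proof to compare against. Judged on its own merits, your proposal has genuine gaps in both parts. For part (1), your sketch never uses the hypothesis $d\geq 2$, which is essential: the squeezing of \cite[Theorem 1.3]{eliashberg2006geometry} is driven by the existence of a positive contractible loop of contactomorphisms of the standard contact sphere $S^{2d-1}$ (non-orderability), available only for $d\geq 2$, and the construction is a delicate ``contact folding'' argument. A radially symmetric template Hamiltonian $\rho(|\bq|^2+|\bp|^2)g(\theta)$ cannot do the job; if it could, the same formula would work for $d=1$ and, worse, nothing in your argument actually distinguishes $\pi R^2<1$ from $\pi R^2\geq 1$ beyond the vague claim that lifted trajectories ``do not collide'', whereas the scale-$1$ threshold is exactly the hard point.

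For part (2), the capacities $([c]_k)$ of this paper cannot recover the full statement. They are defined and monotone only on the class $\mathcal{C}_{\text{big admissible}}$, i.e.\ domains containing $B_a\times S^1$ with $a>1$; as the paper explains in Section 4, for $a\leq 1$ the class $\eta_\ell(B_a\times S^1,\bF_{p_\ell})$ need not be non-zero and the spectrum set degenerates, so $[c]_k$ carries no information. But the critical content of \cite[Theorem 1.2]{eliashberg2006geometry} includes precisely the regime $\pi r^2\leq m\leq \pi R^2$ with $\pi r^2\leq 1$ (e.g.\ $m=1$), where the target ball is \emph{small} and the monotonicity argument you invoke is vacuous. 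The sub-case your argument does reach, $1<\pi r^2<\pi R^2$, is Chiu's later theorem rather than the EKP integer criterion, and even there one must check that the integer-valued, $\bO$-constrained formula $[c]_k(B_a\times S^1)=\min\{\ell\in\bO:\ell/p_\ell<a,\ \ell\geq\lceil k/d\rceil a\}$ actually separates the two balls for some $k$, which you assert but do not verify.
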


Then the only case left about the contact non-squeezing is: what will happen if there is an integer $m$ such that $m<\pi r^2 < \pi R^2 <m+1$? It is solved by Chiu using the microlocal theory of sheaves\cite{chiu2017}, and by Fraser using technique of $J$-holomorphic curves\cite{fraser2016} in the spirit of \cite{eliashberg2006geometry}. They proved the following:
\begin{iThm}[{\cite{chiu2017,fraser2016}}]If $1\leq \pi r^2<\pi R^2$, then $B_{\pi R^2}\times S^1$ cannot be squeezed into $B_{\pi r^2}\times S^1$.
\end{iThm}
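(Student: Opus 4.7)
The plan is to use the sequence of contact capacities $[c]_k$ on $\bR^{2d}\times S^1$ that the abstract announces. Suppose for contradiction that a compactly supported contact isotopy $\varphi_s$ squeezes $B_{\pi R^2}\times S^1$ into $B_{\pi r^2}\times S^1$. By the contact analogue of \autoref{capacity property symplectic} that we need to establish (namely, monotonicity of $[c]_k$ under contact squeezings), this would force
\[
[c]_k(B_{\pi R^2}\times S^1)\ \leq\ [c]_k(B_{\pi r^2}\times S^1)
\]
for every $k\geq 1$. The strategy is then to exhibit a $k$ for which the reverse strict inequality holds.

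To construct $[c]_k$, I would mirror the symplectic definition of $c_k$: to each admissible open set $U\subset \bR^{2d}\times S^1$ attach a Chiu-Tamarkin complex $C_{T,\ell}^{\textnormal{cont}}(U,\bK)$ computed $\bZ$-equivariantly inside $\bR^{2d}\times \bR\subset T^*(\bR^d\times \bR)$, together with a distinguished class $\eta_{\ell,T}^{\textnormal{cont}}(U)$, and declare $[c]_k(U)$ to be the infimum of those $T$ for which $\eta_{\ell,T}^{\textnormal{cont}}(U)$ becomes $k$-fold divisible by $u$ in cohomology. The monotonicity property would then follow from verifying that any compactly supported contact isotopy of $\bR^{2d}\times S^1$ lifts to a compactly supported $\bZ$-equivariant Hamiltonian isotopy of $\bR^{2d}\times \bR\subset T^*(\bR^d\times\bR)$, and then invoking the Guillermou-Kashiwara-Schapira sheaf quantization to produce canonical morphisms of Chiu-Tamarkin complexes compatible with inclusions and preserving the distinguished classes $\eta$.

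Next I would compute $[c]_k(B_{\pi a}\times S^1)$ for $\pi a\geq 1$ by adapting the convex toric calculation of \autoref{structure toric domian module}. The new feature, compared with the symplectic setting, is that $S^1=\bR/\bZ$ imposes an integer periodicity on the action parameter $T$, so the relevant count is of integer lattice points in the truncation $\Omega_T^\circ\cap[0,1]^d$ with $I(z)\geq k$. Under the hypothesis $1\leq \pi r^2<\pi R^2$, the simplex governing $B_{\pi a}$ grows enough from $a=r^2$ to $a=R^2$ that one can identify an index $k$ for which the corresponding threshold $T$ strictly increases; this produces $[c]_k(B_{\pi R^2}\times S^1)>[c]_k(B_{\pi r^2}\times S^1)$ and contradicts the squeezing.

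The principal obstacle is setting up the contact monotonicity rigorously. The Chiu-Tamarkin complex is fundamentally a symplectic invariant, and extending its functoriality to compactly supported contact isotopies on the prequantization demands an equivariant refinement of the Guillermou-Kashiwara-Schapira quantization that is compatible with the $\bZ$-periodicity encoding $S^1$, as well as care that a contact isotopy, which is not globally Hamiltonian on $\bR^{2d}\times \bR$, nevertheless lifts to a well-behaved $\bZ$-equivariant Hamiltonian flow on the prequantization. Once this functoriality is in hand, the integer structure detected by $[c]_k$ closes the argument exactly in the delicate range $1\leq \pi r^2<\pi R^2$ left open by the Eliashberg-Kim-Polterovich integer-interval criterion.
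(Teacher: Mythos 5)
Your overall strategy --- assume a squeezing, invoke invariance and monotonicity of contact capacities, and contradict this with a computation for balls --- is exactly how the machinery of Section 4 is meant to be deployed (the theorem itself is quoted from Chiu and Fraser; the paper's own route to it is through $[c]_k$, \autoref{contact capacities} and \autoref{computation of capacities of contact convex toric domian}). But two of your steps contain genuine gaps. First, you define $[c]_k(U)$ as an infimum over a continuous action parameter $T$. This cannot work: $C_{T,\ell}$ is a contact invariant \emph{only when} $T=\ell$ (\autoref{contactinvariance1}), because a compactly supported contact isotopy of $\bR^{2d}\times S^1$ lifts only to a $\bZ$-equivariant --- hence \emph{not} compactly supported --- isotopy of $J^1(\bR^d)$, and the conjugation argument proving invariance in the symplectic case survives only the $\bZ$-equivariance, not the full $\bR$-family of translations in $t$. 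Your phrase ``compactly supported $\bZ$-equivariant Hamiltonian isotopy of $\bR^{2d}\times\bR$'' is self-contradictory. This forced discretization is why the paper's $[c]_k$ is a minimum over odd $\ell$ with $T=\ell$, valued in $\bO$ rather than $[0,\infty]$.

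Second, you misplace where the hypothesis $1\le \pi r^2$ enters. It is not that ``the simplex grows enough'' from $r$ to $R$ --- that happens for \emph{any} $r<R$ and would wrongly forbid squeezing of small balls, contradicting Eliashberg--Kim--Polterovich. The actual mechanism is the size constraint: $\eta_{\ell}(B_a\times S^1,\bF_{p_\ell})$ is nonzero (torsion-detecting) only when $\ell=T<a\,p_\ell$, i.e.\ $a>\ell/p_\ell\ge 1$; for $a\le 1$ no odd $\ell$ qualifies, $[\textnormal{Spec}]$ is empty, and the invariant is silent --- exactly consistent with the squeezability of small balls. (Your ``integer lattice points in $\Omega_T^\circ\cap[0,1]^d$'' does not correspond to anything in the computation; the relevant formula is $[c]_k(B_a\times S^1)=\min\{\ell\in\bO:\ \ell/p_\ell<a,\ \ell\ge \lceil k/d\rceil a\}$.) Finally, the concluding arithmetic is not carried out: one must exhibit $k$ with $[c]_k(B_{\pi R^2}\times S^1)>[c]_k(B_{\pi r^2}\times S^1)$, e.g.\ take $k=dj$ with $j$ large and an odd prime $p\in[\,j\pi r^2,\ j\pi R^2)$, giving $[c]_{dj}(B_{\pi r^2}\times S^1)\le p< j\pi R^2\le [c]_{dj}(B_{\pi R^2}\times S^1)$ --- and this uses $\pi r^2>1$ \emph{strictly}, since $p\in[\textnormal{Spec}]$ requires $p/p=1<\pi r^2$. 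The boundary case $\pi r^2=1$ of the stated theorem is not covered by the paper's ``big'' framework and needs Chiu's original degree argument.
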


The second purpose of the paper is to explain how Chiu's work could be used to define ``contact capacities" on prequantization $T^*X\times S^1$ for orientable $X$. The notion of admissible open sets still makes sense. The presence of the scale feature makes us consider the so-called big admissible open sets. Concretely, we say a contact admissible open set ${\mathscr{U}} \subset T^*\bR^d\times S^1$ is big if there is a ball $B_a\times S^1$ for $a>1$ that can be embedded in ${\mathscr{U}}$ by a compactly supported contact isotopy on $T^*\bR^d\times S^1$. For a prequantized convex toric domain $X_{\Omega}\times S^1$, where $X_\Omega$ is a toric domain, it is big if $\|\Omega^\circ_1\|_{\infty}=\max_{z\in \Omega^\circ_1}\|z\|_\infty<1$. Besides, to obtain the contact invariance, we need to restrict to $T/\ell\in \bN$ situation in the contact case. Here, we denote $ \mathbb{P}$ the set of all prime numbers. Then we can define
\begin{equation*} [\textnormal{Spec}]({\mathscr{U}},k) \coloneqq
\{p \in \mathbb{P}:\eta^{\bZ/p,c}_{p}({\mathscr{U}},\bF_{p})\in u^kH^{*}\sC^{\bZ/p}_{p}({\mathscr{U}},\bF_{p})\}
\end{equation*}
and \[[c]_k({\mathscr{U}})\coloneqq \min [\textnormal{Spec}]({\mathscr{U}},k) \in  \mathbb{P}.\]
For a general open set ${\mathscr{U}}$, we define $[c]_k({\mathscr{U}})=\sup\{[c]_k(O): O\subset {\mathscr{U}},\,O\text{ is admissible}\}$.

Then we have, \autoref{contact capacities}, \autoref{computation of capacities of contact convex toric domian}:
\begin{iThm} The functions $[c]_k:\text{Open}(T^*X\times S^1)\rightarrow  \mathbb{P}$ satisfy the following:
\begin{enumerate}[fullwidth]
    \item $[c]_k \leq [c]_{k+1}$ for all $k\in \bN$.
    \item For two open sets ${\mathscr{U}}_1 \subset {\mathscr{U}}_2$, we have $[c]_k({\mathscr{U}}_1) \leq [c]_k({\mathscr{U}}_2)$.
    \item For a compactly supported contact isotopy $\varphi:I\times T^*X\times S^1   \rightarrow T^*X\times S^1$, we have $[c]_k({\mathscr{U}})=[c]_k(\varphi_z({\mathscr{U}}))$.
    \item For a big prequantized convex toric domain $X_\Omega \times S^1 \subset T^*\bR^d \times S^1$, we have
\begin{align*}
c_k(X_\Omega\times S^1)=\min\left\{p\in  \mathbb{P}: \exists  {z}\in \Omega_{p}^\circ, I( {z})\geq k\right\}
    =\min\left\{p\in \mathbb{P}: p\geq c_k(X_\Omega) \right\}.
\end{align*}
\end{enumerate}
\end{iThm}
A more concrete example is as follows. Suppose $X_\Omega\times S^1 = E(3,4)\times S^1$, we have.
\begin{center}
\begin{tabular}{c|cccccccccccc}
\hline
  $k$   & 1 & 2 & 3& 4& 5& 6& 7& 8& 9 &10&11\\
\hline 
 $c_k$  &3&4& 6 &  8& 9& 12& 12& 15&16&18&20\\
\hline 
$[c]_k$ & 3 & 5 & 7&  11& 11& 13& 13& 17 &17&19 &23\\
\hline 
\end{tabular}    
\end{center}

\subsection{Microlocal Theory of Sheaves and the Chiu-Tamarkin complex}
The main ingredient of our work is the microlocal theory of sheaves, introduced by Kashiwara and Schapira with motivation from algebraic analysis. We refer to \cite{KS90}. 

The main idea we use in the paper is the notion of microsupport or singular support, which is defined as follows: For a ground commutative ring $\bK$, let $D(X)$ be the derived category of complexes of sheaves of $\bK$-modules over $X$. For an object $F\in D(X)$, we can associate a set $SS(F)\subset T^*X$, which is called the microsupport of $F$. It is proved in \cite{KS90} that $SS(F)$ is always a closed conic and coisotropic subset of $T^*X$. Moreover, when $X$ is real analytic, $SS(F)$ is Lagrangian if and only if $F$ is (weakly) constructible. This result inspires us that the sheaf theory plays its role in symplectic geometry and contact geometry. 

For instance, Tamarkin develops a new method to study displacibility of Lagrangians in \cite{tamarkin2013}. Guillermou gives sheaf theoretical proofs of Gromov-Eliashberg $C^0$-rigidity and of the result by Abouzaid and Kragh that closed exact Lagrangians in cotangent bundles are homotopically equivalent to the zero section. See \cite{conicexactlagrangianGuillermou,C0rigidityGuillermou,3cuspsconjectureGuillermou} and the survey \cite{guillermou2019sheaves} about these topics. Asano-Ike develop a lower bound for the symplectic displacement energy and a lower bound for the number of intersection point of some rational Lagrangian immersions using numerical information from the Tamarkin Category (\autoref{tamarkincategory}) in \cite{asano2017persistence,asano2020sheaf}. On the other hand, there are many works studying the category of sheaves from the point of view of the Fukaya category, see the work of Nadler and Zaslow on the compact Fukaya category \cite{nadler2009constructible,nadler2009microlocal}; and the work of Nadler\cite{Nadlerwarppedfukaya2016}, and of Ganatra, Pardon, and Shende on the wrapped Fukaya category\cite{GPS3}. 

Now, let us review ideas of Tamarkin in \cite{tamarkin2013}. Tamarkin suggested studying the category of sheaves localized with respect to sheaves microsupported in non-positive direction, that is,
 the localization of $D(X\times \bR)$ with respect to the full thick subcategory $\{F:SS(F)\subset \{\tau\leq 0\} \}$.
This localization 
is equivalent to the essential image of the functor $ \bK_{[0,\infty)}\star:D(X\times \bR)\rightarrow D(X\times \bR)$,
where $\star: D(X\times \bR)\times D(X\times \bR)\rightarrow D(X\times \bR)$ is the convolution.
We denote these two equivalent categories by $\mathcal{D}(X)$ and call them the Tamarkin category of $X$.
The category $\mathcal{D}(X)$ is triangulated.

Since the microsupport is conic, Tamarkin considers the following conification
procedure: for a given closed set $A$ in the cotangent bundle $T^*X$ we set $\widehat{A}=\{(x,p,t,\tau)\in T^*(X\times \bR) :(x,p/\tau)\in A, \tau >0\}$. We are interested in the category of sheaves on $X\times \bR$ microsupported in $\widehat{A}$, that is, $F\in \mathcal{D}(X)$ such that $SS(F) \cap \{\tau >0\} \subset \widehat{A}$, and we denote the category they form by $\mathcal{D}_A(X)$. Categorically, this category and its semi-orthogonal complement $^\perp \mathcal{D}_A(X)$ are completely determined by the projectors from $\mathcal{D}(X)$ onto them. Hopefully, we could understand the geometry of $A$ from these projectors. One way to study these projectors is to represent them as integral functors defined by kernels, for example $ \bK_{[0,\infty)}\star$ introduced by Tamarkin, or the {\em cut-off} functors of Kashiwara and Schapira \cite{KS90}. 

Here, we start with the symplectic case. An open set $U\subset T^*X$ whose projector $\mathcal{D}(X) \rightarrow \mathcal{D}_{U}(X)=^\perp \mathcal{D}_{T^*X\setminus U}(X)$ is represented by a convolution functor $\star P_U: \mathcal{D}(X)\rightarrow \mathcal{D}(X)$ is called admissible. We will see later that bounded open sets and toric domains are all admissible. One particularly interesting example is the open ball $U=B_{\pi R^2}$. Chiu constructed a kernel for $B_{\pi R^2}$ using the idea of generating functions in \cite{chiu2017}, which is the main ingredient of his proof of contact non-squeezing.

Another ingredient of Chiu's proof is (a contact version of) an object $C^{\bZ/\ell}_T(U,\bK) \in D_{\bZ/\ell}(\pt)$ defined using $P_U$, where $\bZ/\ell$ is the cyclic group, $\bK$ is a field, and $D_{\bZ/\ell}(\pt)$ denotes the equivariant derived category over point (see \cite{BernsteinLunts}). Remark that Chiu denotes our $\ell$ as $N$.

Chiu did not define the symplectic version $C^{\bZ/\ell}_T(U,\bK)$ explicitly, while his idea applies directly to defining the symplectic version we presented in \autoref{def CT complex}. His discussions on contact invariance and computation work perfectly to the symplectic case as we will present in the following. We have that the object $C^{\bZ/\ell}_T(U,\bK)$ is a Hamiltonian invariant of an admissible open set $U$ for $\ell \in \bN$ and $T\geq 0$. We will define a fundamental class $ \eta^{\bZ/\ell}_T(U,\bK)\in H^0C^{\bZ/\ell}_T(U,\bK)$, and also see that $H^*C^{\bZ/\ell}_T(U,\bK)$ is a left graded module over the Yoneda algebra $A=\textnormal{Ext}^*_{\bZ/\ell}(\bK,\bK)$. If $\text{char}(\bK)|\ell$, we have $A\cong \bK[u,\theta]$ where $|u|=2$, $|\theta|=1$, and $u^2=k\theta$ ($k$ depends on the parity of $\ell$). To achieve the condition $\text{char}(\bK)|\ell$, we can take $\bK=\bF_{p_\ell}$ to be the finite field of order $p_\ell$ where $p_\ell$ is the minimal prime factor of $\ell$.

We will also discuss, in \autoref{section: contact inv}, the contact version $\sC^{\bZ/\ell}_{n\ell}({\mathscr{U}},\bK)$ that Chiu originally defines in \cite{chiu2017}, for a contact admissible open set ${\mathscr{U}}\subset T^*X\times S^1$ and $(n,\ell)\in \bN_0\times \bN$. The differences are that $\sC^{\bZ/\ell}_{n\ell}({\mathscr{U}},\bK)$ is defined for $(n,\ell)\in \bN_0\times \bN$ while $C^{\bZ/\ell}_T(U,\bK)$ is defined for $(T,\ell)\in \bR_{\geq 0}\times \bN$ and that $\sC^{\bZ/\ell}_{n\ell}({\mathscr{U}},\bK)$ is invariant under contact isotopies. The fundamental class $ \eta^{\bZ/\ell,c}_{n\ell}({\mathscr{U}},\bK)\in H^0\sC^{\bZ/\ell}_{n\ell}({\mathscr{U}},\bK)$ and the $A$-module structure can also be defined. We will see that if the open set $U\subset T^*X$ is symplectic admissible, the prequantized open set ${\mathscr{U}}=U\times S^1\subset T^*X\times S^1$ is contact admissible, and we have an isomorphism $\sC^{\bZ/\ell}_{n\ell}(U\times S^1,\bK)\cong C^{\bZ/\ell}_{n\ell}(U,\bK)$, which preserves the fundamental class. The isomorphism is helpful since even though the symplectic version is a priori not a contact invariant, it computes the contact version. In this sense, Chiu computed $C^{\bZ/\ell}_{\ell}(B_{\pi R^2},\bK)$ using $\sC^{\bZ/\ell}_{\ell}(B_{\pi R^2}\times S^1,\bK)$.

Chiu's proof for the contact non-squeezing theorem can be organized by our language in the following steps:
\begin{itemize}[fullwidth]
\item When $\ell$ is an odd prime number and $\pi r^2>1$, Chiu constructs an isomorphism of $A$-modules:
\[ H^*\sC^{\bZ/\ell}_{\ell}(B_{\pi r^2}\times S^1,\bF_\ell)\cong {u^{-d\lfloor \ell/\pi r^2\rfloor}}\bF_\ell[u,\theta], \]
and an element $\Lambda_r=k u^{-d\lfloor \ell/\pi r^2\rfloor}$ such that $\eta^{\bZ/\ell,c}_{\ell}(B_{\pi r^2}\times S^1,\bF_\ell)=u^{d\lfloor \ell/\pi r^2\rfloor}\Lambda_r \neq 0$.

\item The fundamental class is preserved under the contact invariance. Specifically, for a compactly supported contact isotopy $\varphi:I\times T^*\bR^{d}\times S^1 \rightarrow T^*\bR^{d}\times S^1$ and $z\in I$, we have an isomorphism of $A$-modules 
\[\Phi_{z\ell}^{\bZ/\ell,c}:H^*\sC^{\bZ/\ell}_{\ell}(\varphi_z(B_{\pi r^2}\times S^1),\bF_\ell)\cong H^*\sC^{\bZ/\ell}_{\ell}(B_{\pi r^2}\times S^1,\bF_\ell)\]
such that $\eta^{\bZ/\ell,c}_{\ell}(\varphi_z(B_{\pi r^2}\times S^1),\bF_\ell)$ is mapped to $\eta^{\bZ/\ell,c}_{\ell}(B_{\pi r^2}\times S^1,\bF_\ell)$.

\item If there exists an inclusion $B_{\pi R^2}\times S^1\subset B_{\pi r^2}\times S^1$, for $R>r$, we have a degree $0$ morphism of $A$-modules
\[i:H^*\sC^{\bZ/\ell}_{\ell}(B_{\pi r^2}\times S^1,\bF_\ell)\rightarrow H^*\sC^{\bZ/\ell}_{\ell}(B_{\pi R^2}\times S^1,\bF_\ell),\]
which preserves the fundamental class.
In particular, we have $\eta^{\bZ/\ell,c}_{\ell}(B_{\pi R^2}\times S^1,\bF_\ell)=u^{d\lfloor \ell/\pi r^2\rfloor}i(\Lambda_r)$ in $H^0\sC^{\bZ/\ell}_{\ell}(B_{\pi R^2}\times S^1,\bF_\ell)$.
\item However, the degree comparison makes $i(\Lambda_r)=0$ for large enough $\ell$. This is a contradiction because we know that $\eta^{\bZ/\ell,c}_{\ell}(B_{\pi R^2}\times S^1,\bF_\ell)\neq 0$.
\end{itemize}
We use the fundamental class, the module structure, and the invariance to define the capacities as we presented in the last two subsections. In this sense, our capacities form a numerical package of Chiu's arguments. Meanwhile, our third main result generalizes Chiu's computation of the Chiu-Tamarkin complex for balls to convex toric domains $X_{\Omega}\subset \bC^d=T^*\bR^{d}$.

As we already know that if ${\mathscr{U}}=U\times S^1$, the computation for both symplectic case and contact case is essentially the same. So, it is enough to compute the symplectic version. We will construct a good kernel for $X_\Omega$ based on Chiu's construction and then compute the symplectic Chiu-Tamarkin complex $C^{\bZ/\ell}_T(X_\Omega,\bF_{p_\ell})$. We will show the following structural theorem.
\begin{iThm}[\autoref{structure toric domain module}]For a convex toric domain $X_\Omega \subset T^*\bR^{d}$, and $\ell\in \bN_{\geq 2}$. If $0\leq T <p_\ell /\|\Omega^\circ_1\|_{\infty}$, we have

\begin{itemize}[fullwidth]
    
\item For each $Z\in \Omega^\circ_T$ (see \eqref{convex toric domain notation}), the inclusion of the segment $\overline{OZ} \subset \Omega^\circ_T$ induces a decomposition of the fundamental class $\eta^{\bZ/\ell}_T(X_{\Omega},\bF_{p_\ell})=u^{I(Z)}\Lambda_{Z,\ell}$ for a non-torsion element $\Lambda_{Z,\ell}\in H^{-2I(Z)}C^{\bZ/\ell}_T(X_\Omega,\bF_{p_\ell})$. In particular, $\eta^{\bZ/\ell}_T(X_{\Omega},\bF_{p_\ell})$ is non-zero.

\item The minimal cohomology degree of $H^*C^{\bZ/\ell}_T(X_\Omega,\bF_{p_\ell})$ is exactly $-2I(\Omega^\circ_T)$, i.e.,
    \[ H^*C^{\bZ/\ell}_T(X_\Omega,\bF_{p_\ell})\cong  H^{\geq -2I(\Omega^\circ_T)}C^{\bZ/\ell}_T(X_\Omega,\bF_{p_\ell}), \] 
    and
\[ H^{-2I(\Omega^\circ_T)}C^{\bZ/\ell}_T(X_\Omega,\bF_{p_\ell})\neq 0 . \] 
\item $H^*C^{\bZ/\ell}_T(X_\Omega,\bF_{p_\ell})$ is a finitely generated $\bF_{p_\ell}[u]$-module. The free part is isomorphic to $A=\bF_{p_\ell}[u,\theta]$, so $H^*C^{\bZ/\ell}_T(X_\Omega,\bF_{p_\ell})$ is of rank $2$ over $\bF_{p_\ell}[u]$. 

The torsion part is located in cohomology degree $[-2I(\Omega^\circ_T),-1]$.  $H^*C^{\bZ/\ell}_T(X_\Omega,\bF_{p_\ell})$ is torsion free when $X_\Omega$ is an open ellipsoid.\end{itemize}
\end{iThm}

\subsection{Related works}  
In \cite{large_non_equeezing_GF}, we construct a $\bZ/\ell$-equivariant generating function homology theory for $\ell\geq 2$ using a similar idea, and give a new proof of the contact non-squeezing theorem; we also define a geometric notion {\em translated chains}, which explains the geometry intuition behind of $\bZ/\ell$-equivariant generating function homology. The notion of translated chains explains the same geometric intuition in the Chiu-Tamarkin complex.

Algebraically, in \cite{CyclicZHANG}, we construct an $S^1$-equivariant Chiu-Tamarkin complex which is similar to the Tsygan-Loday-Quillen definition of the cyclic cohomology. In particular, we construct an algebraic $S^1$-action that encodes $C^{\bZ/\ell}_T(U,\bK)$ for all $\ell$ and $T\in \bR_{\geq 0}$. However, if we want to define a contact invariant in this way, we need again $T/\ell\in \bN_0$ for all $\ell$, which is possible only for $n=0$. It explains algebraically why we cannot define an $S^1$-theory for the contact case using the same idea. However, the $\bZ/\ell$-theory here works perfectly for the contact case. The contact capacities we defined above encode sufficient numerical information that is enough for the contact non-squeezing theorem. In this way, we can think of the contact capacities as a numerical approximation of the $S^1$-action.

\subsection{Organization and Conventions of the Paper}We will review preliminary notions of sheaf theory in \autoref{section: reminder}. In \autoref{section: CT complex}, we will present the main constructions, including microlocal kernels, the Chiu-Tamarkin complex, fundamental class and capacities. We will focus on the toric domains in \autoref{section: toric domain}. We would like to exhibit all constructions and computations for the toric domains therein. Subsequently, we will state how our construction works for prequantized contact manifold $T^*X\times S^1_\sigma$ in \autoref{section: contact inv}.

At the end of the introduction, let us introduce some notation.

The natural number set $\bN$ starts from $1$, and $\bN_0$ denotes $\bN\cup\{0\}$. For $n\in \bN$, we denote $[n]=\{1,\dots,n\}$. For any $\ell\in \bN_{\geq 2}$, we denote the minimal prime factor of $\ell$ by $p_\ell$.

We use subscripts to represent elements in sets. For example, to emphasize $a\in A$, we use the notation $A_a$. For the Cartesian product $A^n$, we define $\delta_{A^n}:A\rightarrow A^n$ to be the diagonal map and its image is denoted by $\Delta_{A^n}$ as well.

Projection maps are always denoted by $\pi$, with a subscript that encodes the fiber of the projection. For example, if there are two sets $X_x$ and $Y_y$, two projections are
$$\pi_Y=\pi_y: X_x\times Y_y \rightarrow X_x,\quad \pi_X=\pi_x: X_x\times Y_y \rightarrow Y_y.$$

If we have a trivial vector bundle $X\times V_v$, its summation map is \[\text{id}_X\times s_V^n =\text{id}_X\times s_v^n : X \times V^n \rightarrow X \times V,\, (x,v_1,\dots,v_n) \mapsto (x,v_1+\cdots +v_n ).\] 
In all cases, we will ignore $\text{id}_X$ and only use $s_V^n=s_v^n$ for simplicity.

For a manifold $X$, we always use $\bq\in X$ to represent both the points and the local coordinates of $X$. Correspondingly, the canonical Darboux coordinate of $T^*X$ will be denoted by $(\bq,\bp)$. Vector spaces that are considered as parameter spaces are an exception. For example, $\bR_t$, its dual coordinate is denoted by $\tau \in (\bR_t)^*=\bR_\tau$.

For a manifold $X$, the $1$-jet space is $J^1X=T^*X\times \bR_t$, which is a contact manifold equipped with the contact form $\alpha=dt+\bp d\bq$. The symplectization of $J^1X$ is identified with $T^*X\times T^*_{\tau>0}\bR_t=T^*X\times \bR_t\times \bR_{\tau>0}$, equipped with the symplectic form $\omega=d\bp\wedge d\bq+d\tau\wedge dt$. The symplectic reduction of $T^*X\times T^*_{\tau>0}\bR_t$ with respect to the hypersurface $\{\tau=1\}$ is denoted by $\rho$, which is identified with 
\begin{equation}
  \rho:T^*X\times T^*_{\tau>0}\bR_t \rightarrow T^*X,\, (\bq,\bp,t,\tau) \mapsto (\bq,\bp/\tau).\label{Tamarkin conemap}  
\end{equation}
We call it the Tamarkin's cone map. The map $\rho$ factors through the symplectization map $q$ tautologically:
\begin{center}
    \begin{tikzcd}
T^*X\times T^*_{\tau>0}\bR_t \arrow[r,"q"] \arrow[rr, "\rho", bend right] & J^1X \arrow[r] & T^*X.
\end{tikzcd}
\end{center}

In this paper, we equip the prequantized manifold $T^*X\times S^1_\sigma$ with the contact form $\alpha=d\sigma+\bp d\bq$, which is different with the contact form $d\sigma+\frac{1}{2}(\bq d\bp-\bp d\bq)$ we mentioned before. Then the canonical covering map $J^1X \rightarrow T^*X\times S^1_\sigma$ preserves its contact form.

\subsection*{Acknowledgements}This work is part of my PhD thesis at Université Grenoble Alpes. I would like to thank St\'ephane Guillermou for many very helpful discussions, lots of important observations, and advice on the text; Claude Viterbo for pointing out the existence of capacities, and helpful discussions; Vivek Shende, Nicolas Vichery and Jun Zhang for many helpful comments and discussions; Shengfu Chiu, Octav Cornea for the encouragement. Thanks for Joseph Helfer for pointing out some mistakes in early versions of the article. Thanks for the anonymous referees and their helpful comments. I also want to thank the encouragement of Shaoyun Bai, Honghao Gao, Xuelun Hou, Yichen Qin, Jian Wang, Xiaojun Wu, and suggestions for the improvement of the current text. This work is supported by the ANR projects MICROLOCAL (ANR-15CE40-0007-01), COSY (ANR-21-CE40-0002), and the Novo Nordisk Foundation grant NNF20OC0066298.

\section{Reminder on Sheaves and Equivariant Sheaves}\label{section: reminder}
In this section, we review the notions and tools of sheaves we will use. Let $\bK$ be a commutative ring with finite global dimension. In practice, we only interest the case that $\bK$ is a field or $\bK=\bZ$. For a manifold $X$, let us denote $D(X)$ the derived category of complexes of sheaves of $\bK$-modules over $X$. We note that we do not specify the boundedness of complexes we used in general. In most of our applications, the complexes are locally bounded in the sense that their restrictions on relatively compact open sets are bounded. We refer to \cite{KS90} as the main reference of the section. 

\subsection{Microsupport of Sheaves and Functorial Estimate}For a locally closed inclusion $i:Z\subset X$ and $F\in D(X)$ we set
\[F_Z=i_!i^{-1}F, \quad {\textnormal{R}}\Gamma_ZF=i_*i^!F.\]
\begin{Def}[{\cite[Definition 5.1.2]{KS90}}]For $F\in D(X)$ the  microsupport of $F$ is 
\begin{equation*}
SS(F)= \overline{\left\lbrace(\bq,\bp)\in T^*X: \begin{aligned}
    &\text{There is a }C^1\text{-function }f\text{ near }\bq\text{ such that}\\
    &f(\bq)=0\text{, }df(\bq)=\bp\text{ and }\left
({\textnormal{R}}\Gamma_{\{f\geq 0\}}F\right)_\bq\neq 0.
\end{aligned} \right\rbrace}.    
\end{equation*}
 \end{Def}
By definition, $SS(F)$ is a closed subset of $T^*X$, conic with respect to the $\bR_{>0}$-action along fibres. There is a triangulated inequality for the microsupport: for a distinguished triangle $A\rightarrow B\rightarrow C\xrightarrow{+1}$, we have $SS(A)\subset SS(B)\cup SS(C)$. 

\begin{Thm}[{\cite[Theorem 5.4.5(ii)(c)]{KS90}}]\label{microsupported in zero section and local system}For $F\in D(X)$, we have the equivalence:
\[SS(F)\subset 0_X\text{ if and only if } \forall k\in\bZ, \mathcal{H}^k(F)\text{ are local systems.} \]
\end{Thm}
We set $\dot{T}^*X=T^*X\setminus 0_X$, and $\dot{SS}(F)=SS(F)\cap \dot{T}^*X$.

The conicity is an issue since we want to consider general subsets of $T^*X$.
We will use the Tamarkin's cone map $\rho$ of \eqref{Tamarkin conemap}
to resolve the conicity. This is important because most of symplectic geometric problems are non-conic. However, the cone map is only defined when $\tau>0$ and it is helpful to introduce the Legendre microsupport and the sectional microsupport as follows: For sheaves $F\in D(X\times \bR_t)$, we set 
\begin{align}\label{mu supp}
\begin{aligned}
        {\mu}s_L(F)=q\left(SS(F)\cap \{\tau> 0\}\right)\subset J^1X,\\
    {\mu}s(F)=\rho\left(SS(F)\cap \{\tau> 0\}\right)\subset T^*X.
\end{aligned}
\end{align}
A direct consequence is that ${\mu}s_L(F)$ and ${\mu}s(F)$ are not necessarily conic. However, ${\mu}s_L(F)$ and ${\mu}s(F)$ will lose $\tau \leq 0$ information. Usually, we will consider sheaves that satisfy $ SS(F)\subset \{ \tau \geq 0 \}$ and it will often be the case, in practice, that $SS(F)\cap \{ \tau \leq 0 \} \subset 0_{X\times\bR}$. So, \autoref{microsupported in zero section and local system} shows that we will not lose much information.

Let $f: X\rightarrow Y$ be a $C^1$ map of manifolds. Then there is a diagram of cotangent map:
\begin{center}
\begin{tikzcd}
T^*X  & X\times _Y T^*Y \arrow[l, "df^*"'] \arrow[r, "f_\pi"]  & T^*Y \end{tikzcd}
\end{center}
\begin{Def}Let $f: X\rightarrow Y$ be a $C^1$ map of manifolds, and $\Lambda\subset T^*Y$ be a conic subset. One says that $f$ is non-characteristic for $\Lambda$ if for all $(\bq,\bp)\in \Lambda$ and $df^*_\bq(\bp)=0 $, we have $\bp=0.$\end{Def}
Then we list some functorial estimates we need.
\begin{Thm}[{\cite[Theorem 5.4]{KS90}}]\label{functional estimate}Let $f: X\rightarrow Y$ be a $C^1$ map of manifolds, $F\in D(X), G\in D(Y)$. Let $\omega_{X/Y}=f^!\bK_Y$ be the relative dualizing complex. 
\begin{enumerate}[fullwidth]
\item One has
\begin{align*}
&SS(F   \dboxtimes G) \subset SS(F)\times SS(G),\\
&SS(\RHHOM(\pi_X^{-1}F, \pi_Y^{-1} G)) \subset (-SS(F)) \times SS(G).
\end{align*}

\item Assume $f$ is proper on $\supp(F)$, then $SS({\textnormal{R}}f_!F)\subset f_\pi (df^*)^{-1}\left(SS(F) \right)$.

\item Assume $f$ is non-characteristic for $SS(G)$. Then the natural morphism $f^{-1}G \dotimes \omega_{X/Y}  \rightarrow f^!G$ is an isomorphism, and $SS(f^{-1} G) \cup SS(f^{!}G)\subset df^*f^{-1}_\pi \left( SS(G)\right) $.

\item Assume $f$ is a submersion. Then $SS(F) \subset X \times _Y T^*Y$ if and only if $\forall j\in \bZ$, the sheaves $\mathcal{H}^j(F)$ are locally constant on the fibres of $f$.
\end{enumerate} 
\end{Thm}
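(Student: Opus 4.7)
The plan is to follow the standard microlocal toolkit: reduce each statement to the pointwise vanishing criterion for ${\textnormal{R}}\Gamma_{\{f\geq 0\}}(\cdot)$ and then combine with base change and Morse-theoretic perturbations of the test functions. Throughout, I will use the reformulation that $(\bq_0,\bp_0)\notin SS(F)$ means: for every $C^1$ function $f$ with $f(\bq_0)=0$ and $df(\bq_0)=\bp_0$, one has $({\textnormal{R}}\Gamma_{\{f\geq 0\}}F)_{\bq_0}=0$ in a neighborhood of $\bq_0$. It is worth noting the soft step first: conicity and closedness of $SS$ are automatic, so one only needs to check membership on an open dense subset, and one may freely perturb test functions by $C^2$-small amounts while preserving the relevant $1$-jet.

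For (1), the external product statement, I would fix $(\bq_0,\bp_0,\bq_0',\bp_0')$ with, say, $(\bq_0,\bp_0)\notin SS(F)$, and a test function $\phi$ on $X\times Y$ with $d\phi(\bq_0,\bq_0')=(\bp_0,\bp_0')$. Near a critical situation the Morse lemma with parameters allows one to replace $\phi$ by its partial Legendre transform in the $Y$-direction, effectively reducing to $\phi(\bq,\bq')=f(\bq)+g(\bq')$. Then the Künneth formula for ${\textnormal{R}}\Gamma_{\{\phi\geq 0\}}$ gives the vanishing from the vanishing on the $X$-factor. The $\RHHOM$ estimate follows from the $\boxtimes$ estimate together with the standard rule $SS(\RHHOM(F,G))\subset SS(F)^a+SS(G)$ on a single manifold, applied after $\pi_X^{-1},\pi_Y^{-1}$.

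For (2), the proper direct image, the key is base change: for any test function $g$ on $Y$,
\[
{\textnormal{R}}\Gamma_{\{g\geq 0\}}({\textnormal{R}}f_! F)\cong {\textnormal{R}}f_!\bigl({\textnormal{R}}\Gamma_{\{g\circ f\geq 0\}}F\bigr),
\]
using properness on $\supp F$. Taking the stalk at $y_0$ and applying proper base change, the right-hand side becomes ${\textnormal{R}}\Gamma(f^{-1}(y_0);{\textnormal{R}}\Gamma_{\{g\circ f\geq 0\}}F)$. If $(y_0,dg(y_0))\notin f_\pi(df^*)^{-1}SS(F)$, then for every $x\in f^{-1}(y_0)$ the covector $d(g\circ f)(x)=df^*_x(dg(y_0))$ lies outside $SS(F)$, so the integrand vanishes at every such $x$, and the pushforward vanishes. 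For (3), the non-characteristicness of $f$ for $SS(G)$ means that $df^*$ is injective on the relevant covectors, so any test function $g$ on $Y$ pulls back to a test function $g\circ f$ on $X$ whose $1$-jet data remains well-controlled, and one argues symmetrically using base change for $f^{-1}$ and $f^!$; the isomorphism $f^{-1}G\otimes \omega_{X/Y}\simeq f^! G$ is obtained by factoring $f$ through its graph $X\hookrightarrow X\times Y$ (a closed embedding, which is non-characteristic by hypothesis) followed by the submersion $\pi_Y$, for which $\pi_Y^{!}=\pi_Y^{-1}[\dim X]$ up to orientation. Finally, (4) follows for a submersion locally modeled as $X=Y\times Z\to Y$: the microsupport condition $SS(F)\subset X\times_Y T^*Y$ means $SS(F)$ has no $T^*Z$-component, and the Whitney-type result that a sheaf whose microsupport lies in the zero section along one direction is locally constant along that direction concludes (this last statement follows from the propagation theorem \cite[Prop.~5.4.5]{KS90}).

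The main obstacle is step (1): the naive separation of variables for $\phi(\bq,\bq')$ does not work, and one genuinely needs a parametric Morse-type lemma (or equivalently the refined ``non-characteristic deformation" technique of Kashiwara--Schapira) to reduce to separated test functions, together with a careful treatment of the case when $\bp_0$ or $\bp_0'$ is zero. Once (1) is established, (2) and (3) are bookkeeping with base change, and (4) is a local triviality statement. The duality isomorphism in (3) also requires the observation that $\omega_{X/Y}$ is locally constant of rank one when $f$ is a submersion, while for a closed embedding it is the relative orientation sheaf shifted by the codimension — the non-characteristic condition is precisely what makes the combination behave well.
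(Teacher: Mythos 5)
The paper does not prove this statement: it is quoted as background directly from Kashiwara--Schapira \cite{KS90} (it collects Prop.~5.4.1, 5.4.4, 5.4.5 and 5.4.13 of that book), so there is no in-paper argument to compare against; I can only measure your sketch against the standard proofs. Your part (2) is essentially the standard argument: adjunction gives ${\textnormal{R}}\Gamma_{\{g\geq 0\}}{\textnormal{R}}f_{*}F\cong {\textnormal{R}}f_{*}{\textnormal{R}}\Gamma_{\{g\circ f\geq 0\}}F$, properness on $\supp F$ identifies ${\textnormal{R}}f_!$ with ${\textnormal{R}}f_*$ and the stalk at $y_0$ with sections over the fibre, and the openness of the complement of $f_\pi (df^*)^{-1}(SS(F))$ (again by properness) takes care of the closure in the definition of $SS$. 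That part stands.

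Parts (1) and (3), however, have genuine gaps. In (1), even granting a reduction of the test function to $\phi(\bq,\bq')=f(\bq)+g(\bq')$, the set $\{f(\bq)+g(\bq')\geq 0\}$ is not a product, so there is no K\"unneth formula for ${\textnormal{R}}\Gamma_{\{\phi\geq 0\}}(F\boxtimes G)$; and the ``partial Legendre transform'' acts on functions, not on the sheaf side, so the reduction itself is not available. The actual proof characterizes the microsupport through the microlocal cut-off functors attached to convex cones ($\gamma$-topology, \cite[Prop.~5.2.1]{KS90}); the point is that the cut-off for a product cone factors as an external product of cut-offs. This is the missing engine, and it is also what you need in (3): after factoring $f$ through its graph, the submersion factor is trivial, but the entire content is the non-characteristic \emph{closed embedding}, where one must prove ${\textnormal{R}}\Gamma_X(G)|_X\cong i^{-1}G\otimes\omega_{X/Y}$ --- this rests on the cut-off lemma (equivalently, a Fourier--Sato argument in the normal directions), not on base change. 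Finally, your (4) cites \cite[Prop.~5.4.5]{KS90}, which \emph{is} the statement being proven; the correct reduction is to one-dimensional fibres plus the microlocal Morse lemma (\cite[Prop.~2.7.2]{KS90}), a repair that is easy but should be made explicit.
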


\begin{Coro} Let $F_1,F_2 \in D(X)$.
\begin{enumerate}[fullwidth]
\item Assume $SS(F_1)\cap (-SS(F_2))\subset 0_X$, then $SS(F_1 \dotimes   F_2) \subset SS(F_1)+SS(F_2).$
\item Assume $SS(F_1)\cap SS(F_2)\subset 0_X$, then $SS(\RHHOM(F_2, F_1) ) \subset (-SS(F_2))+SS(F_1).$
\end{enumerate}
\end{Coro}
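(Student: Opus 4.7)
Both statements will follow by applying the non-characteristic pullback estimate from \autoref{functional estimate}(3) along the diagonal $\delta = \delta_{X^2} : X \hookrightarrow X \times X$, starting from the external estimates in \autoref{functional estimate}(1). The reason is the standard identifications $F_1 \otimes F_2 \cong \delta^{-1}(F_1 \boxtimes F_2)$ and $\RHHOM(F_2,F_1) \cong \delta^{-1}\RHHOM(\pi_X^{-1}F_2, \pi_Y^{-1} F_1)$. The codifferential $d\delta^* : X \times_{X^2} T^*(X^2) \to T^*X$ is the fiberwise sum $(\bp_1, \bp_2) \mapsto \bp_1 + \bp_2$, so I will only have to check that $\delta$ is non-characteristic for the ambient set on the right, and then read off the conclusion as the image of that set under the sum map.

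\textbf{Part (1).} By \autoref{functional estimate}(1), $SS(F_1 \boxtimes F_2) \subset SS(F_1) \times SS(F_2)$. The non-characteristic condition for $\delta$ with respect to this product reads: if $(\bp_1, \bp_2) \in SS(F_1) \times SS(F_2)$ satisfies $\bp_1 + \bp_2 = 0$, then $\bp_1 = \bp_2 = 0$. Equivalently, $\bp_1 \in SS(F_1) \cap (-SS(F_2))$ forces $\bp_1 = 0$, which is exactly the hypothesis. Hence \autoref{functional estimate}(3) applies and yields
\[
SS(F_1 \otimes F_2) = SS(\delta^{-1}(F_1 \boxtimes F_2)) \subset d\delta^* \, \delta^{-1}(SS(F_1) \times SS(F_2)) = SS(F_1) + SS(F_2),
\]
where the last equality is the description of $d\delta^*$ as the sum map.

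\textbf{Part (2).} Here I start from the second bound in \autoref{functional estimate}(1), namely $SS(\RHHOM(\pi_X^{-1}F_2, \pi_Y^{-1} F_1)) \subset SS(F_2)^a \times SS(F_1)$. The non-characteristic condition for $\delta$ against this set now reads: if $(-\bp_2, \bp_1) \in SS(F_2)^a \times SS(F_1)$ and $-\bp_2 + \bp_1 = 0$, then $\bp_1 = \bp_2 = 0$. This translates to $\bp_1 = \bp_2 \in SS(F_1) \cap SS(F_2)$, which the hypothesis forces to lie in $0_X$. Applying \autoref{functional estimate}(3) once more gives
\[
SS(\RHHOM(F_2,F_1)) \subset d\delta^* (SS(F_2)^a \times SS(F_1)) = -SS(F_2) + SS(F_1).
\]

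\textbf{Where the work sits.} Both parts are essentially a bookkeeping exercise once the external estimates and the non-characteristic pullback theorem are taken as granted; the only point requiring care is the antipodal twist appearing in the $\RHHOM$ estimate, which is exactly what aligns the non-characteristic condition with the hypothesis $SS(F_1)\cap SS(F_2)\subset 0_X$ (as opposed to $SS(F_1)\cap(-SS(F_2))\subset 0_X$ in part (1)). I expect no serious obstacle beyond keeping track of the signs and of the identification of $d\delta^*$ with the sum map.
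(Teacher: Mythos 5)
Your proof is correct and is exactly the standard diagonal-pullback argument that the paper implicitly relies on when it states this as a corollary of \autoref{functional estimate} (the paper gives no written proof, and this is the argument of \cite[Proposition 5.4.14]{KS90}). The only cosmetic point is that the canonical identity for $\RHHOM$ uses $\delta^!$ rather than $\delta^{-1}$ (with a $\pi^!$ on one factor), but under your non-characteristic hypothesis \autoref{functional estimate}(3) identifies the two up to the dualizing complex, so the microsupport bound is unaffected.
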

The following \autoref{microlocal morse} is called the microlocal Morse lemma. 
\begin{Coro}\label{microlocal morse}For $F\in D(X)$, let $\phi:X\rightarrow \bR$ be a $C^1$-function that is proper on $\supp(F)$. Let $a<b$ in $\bR$ and assume $d\phi(x)\notin SS(F)$ for $a\leq \phi(x) <b$. Then the natural morphisms $\textnormal{R}\Gamma(\{\phi(x)<b\},F)\rightarrow \textnormal{R}\Gamma(\{\phi(x)<a\},F)$ and $\textnormal{R}\Gamma_{\{\phi(x)\geq b\}}(X,F)\rightarrow \textnormal{R}\Gamma_{\{\phi(x)\geq a\}}(X,F)$ are isomorphisms.
\end{Coro}
For the non-proper pushforward, we have
\begin{Thm}[{\cite[[Corollary 3.4]{tamarkin2013}}]{\label{non proper pushforward estimate}}Let $V$ be an $\bR$-vector space, $\pi_V:X\times V\rightarrow X$, and $\pi_V^\#: T^*X\times V\times V^*\rightarrow T^*X\times V^*$ be the corresponding projections, and $i:T^*X\rightarrow T^*X\times V^*$ be the inclusion. Then for $F\in D(X\times V)$, we have
\[SS(\pi_{V!}F),SS(\pi_{V*}F) \subset i^{-1}\overline{\pi_V^\#(SS(F))}.\]

\end{Thm}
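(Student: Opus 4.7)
The plan is to compactify the vector-space fibre $V$ in order to reduce the non-proper pushforward to the proper-pushforward estimate of \autoref{functional estimate}(2); the closure appearing in the statement will then arise as the boundary-at-infinity contribution to the microsupport of the compactified sheaf.

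First, I would fix a smooth compactification $j\colon V\hookrightarrow \bar V$ with $\bar V$ a compact manifold with boundary and $j$ an open embedding onto the interior — the radial compactification $\bar V = V \sqcup S(V)$ is the natural choice. Set $J = \mathrm{id}_X \times j$ and $\bar\pi_V\colon X\times \bar V \to X$. Since $\bar\pi_V$ is proper and a submersion, we have
\[
\pi_{V!}F \simeq \bar\pi_{V*}(J_! F), \qquad \pi_{V*}F \simeq \bar\pi_{V*}(J_* F),
\]
and \autoref{functional estimate}(2) applied to $\bar\pi_V$ gives
\[
SS(\pi_{V!}F)\ \subset\ \bigl\{(x,\xi)\in T^*X : \exists\, v\in \bar V,\ (x,v,\xi,0)\in SS(J_! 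F)\bigr\},
\]
because the cotangent inclusion $d\bar\pi_V^*$ sends $(x,v,\xi)$ to $(x,v,\xi,0)$ while $\bar\pi_{V,\pi}$ forgets the $v$-coordinate. An identical inclusion holds for $J_* F$.

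The main step is the analysis of $SS(J_! F)$ and $SS(J_* F)$. Above the open stratum $X\times V$, both sheaves restrict to $F$, so they contribute exactly $\pi_V^\#(SS(F))\cap\{\eta = 0\} = i^{-1}\pi_V^\#(SS(F))$ to the right-hand side. Above a boundary point $(x_0,v_\infty)\in X\times\partial\bar V$ I would work in local coordinates $(r,\theta)=(1/|v|,\,v/|v|)$, where a bounded $\bar V$-cotangent direction corresponds, via $d\theta = |v|^{-1}(dv - v\,d|v|/|v|)$, to a $V^*$-component of order $1/|v|$. Testing a covector $(x_0,v_\infty,\xi_0,0)\in SS(J_! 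F)$ against $C^1$-functions depending only on $x$, perturbed by $O(r)$ terms in $\bar V$, then forces the existence of a sequence $(x_n,v_n,\xi_n,\eta_n)\in SS(F)$ with $|v_n|\to\infty$, $v_n/|v_n|\to v_\infty$, $(x_n,\xi_n)\to(x_0,\xi_0)$, and $\eta_n\to 0$ in $V^*$. These are exactly the limits required to pass from $\pi_V^\#(SS(F))$ to its closure in $T^*X\times V^*$ intersected with the zero section $\{\eta = 0\}$. The case of $J_*F$ is symmetric (or can be deduced by duality using $\omega_{V/\bar V}$).

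Combining the two steps yields $SS(\pi_{V!}F),\,SS(\pi_{V*}F)\subset i^{-1}\overline{\pi_V^\#(SS(F))}$, as claimed. The main obstacle is the boundary computation in the third paragraph: one has to check that no ``extraneous'' covectors appear in $SS(J_! F)$ or $SS(J_* F)$ over $X\times\partial\bar V$ beyond those produced by escape-to-infinity of points of $SS(F)$, and that the rescaling between bounded $\bar V$-covectors and vanishing $V^*$-covectors correctly identifies the boundary contribution with $i^{-1}(\overline{\pi_V^\#(SS(F))}\setminus\pi_V^\#(SS(F)))$. Once this local microsupport bookkeeping is in place, the rest of the proof is a formal combination of \autoref{functional estimate}(2) with the decomposition of $\pi_V$ through the compactification.
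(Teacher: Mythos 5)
The paper does not prove this statement at all --- it is quoted directly from Tamarkin's \cite{tamarkin2013}, Corollary 3.4 --- so there is no internal proof to compare against and I can only judge your argument on its own terms.

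Your first two paragraphs are sound: compactifying the fibre, writing $\pi_{V!}F\simeq\bar\pi_{V*}(J_!F)$ and $\pi_{V*}F\simeq\bar\pi_{V*}(J_*F)$, and invoking the proper direct image estimate is a legitimate reduction (modulo the routine step of embedding $\bar V$ into an open manifold so that microsupports are defined). But the third paragraph, which you yourself flag as ``the main obstacle,'' is the entire content of the theorem, and as written it is an assertion rather than a proof. To exclude a covector from $SS(J_!F)$ one must show that $(\mathrm{R}\Gamma_{\{f\ge 0\}}J_!F)$ vanishes at the point for \emph{every} $C^1$ test function $f$ with the prescribed differential; testing only against functions ``depending only on $x$, perturbed by $O(r)$ terms'' does not exhaust these, so the claimed production of a sequence $(x_n,v_n,\xi_n,\eta_n)\in SS(F)$ with $\eta_n\to 0$ does not follow from what you wrote. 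What is actually needed is an a priori bound on $SS(J_!F)$ over $X\times\partial\bar V$, for instance one of the form $SS(J_!F)\subset SS(J_*F)\mathbin{\widehat{+}}N^*(X\times\partial\bar V)$ in the sense of \cite[Section 6.3]{KS90} (or, alternatively, the non-characteristic deformation lemma applied to an exhaustion of $V$ by balls, which is closer to how such non-proper estimates are usually established). Granted such a bound, the rescaling computation must then be carried out honestly: in coordinates $r=1/|v|$ a covector $\eta\,dv$ at $|v|=1/r$ has $\bar V$-components of size $|\eta|/r$ and $|\eta|/r^2$, and one checks that a limit covector over the boundary whose $dr$- and $d\theta$-components vanish --- after allowing for cancellation of the $dr$-part against the conormal term in the $\widehat{+}$ --- forces $\eta_n\to 0$, which is what places the limit in $i^{-1}\overline{\pi_V^\#(SS(F))}$. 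Finally, dismissing the $J_*$ case as ``symmetric'' or ``by duality'' is not justified: $J_!$ and $J_*$ have genuinely different boundary behaviour, and a duality argument would require both $SS(\mathrm{D}F)\subset SS(F)^a$ (which is an equality only under constructibility-type hypotheses) and a commutation of duality with $\bar\pi_{V*}$ that must itself be checked. Until these points are supplied, the proposal is a plausible strategy with its central step missing.
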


\subsection{Convolution and Tamarkin Category}
Let $X_1,X_2,X_3$ be three manifolds. Recall, $\pi_X: X\times Y \rightarrow Y$ is a projection whose fiber is $X$ for arbitrary $Y$.

\begin{Def}For $F\in D(X_1\times X_2\times \bR_{t_1})$, $G\in D(X_2\times X_3\times \bR_{t_2})$. The convolution is defined as 
\[F\underset{X_2}{\star} G\coloneqq {\textnormal{R}}s_{t!}^2{\textnormal{R}}\pi_{X_2!}(\pi_{(X_3,t_2)}^{-1}F\dotimes \pi_{(X_1,t_1)}^{-1}G)\in D(X_1\times X_3\times \bR_t) .\]
In particular, when $X_2$ is a point, we use the notation $F_1\boxstar F_2$ to empathise. If there is no confusion, we can drop the subscript $X_2$ and write $F\star G=F\underset{X_2}{\star}G$ directly.

Similarly, for $F\in D(X_1\times X_2)$, $G\in D(X_2\times X_3)$, the composition is defined as
\[F\underset{X_2}{\circ}  G= F\circ G\coloneqq {\textnormal{R}}\pi_{X_2!}(\pi_{X_3}^{-1}F  \dotimes \pi_{X_1}^{-1}G  )  \in D(X_1\times X_3 ) .\]
\end{Def}
For $0\in \bR$, $F\in D(X\times \bR)$, we have $ \bK_{0}\star F \cong F$. So, the functor $ \bK_{0}\star$ plays the role of the identity functor. Besides, $\star$ and $\circ$ satisfy the following monoidal identities:
\begin{align}\label{monoidal identities}
\begin{aligned}
&(F_1\star F_2)\star F_3 \cong F_1\star (F_2\star F_3),\quad (F_1\circ F_2)\circ F_3 \cong F_1\circ (F_2\circ F_3),\\
& F_1 \star F_2 \cong F_2 \star F_1, \quad F_1 \circ F_2 \cong F_2 \circ F_1,\\
&(F_1\star F_2)\circ F_3 \cong F_1\star (F_2\circ F_3).
\end{aligned}
\end{align}
Here, the commutative identities are induced by the identification $X_1\times X_3 \cong X_3\times X_1$.

\begin{RMK}\label{remark convolution and composition comparision}
In specific cases, convolution could be presented by composition on $X\times \bR_t$. For example, if $F\in D(X^2\times \bR_{t_1})$ and $G\in D(X\times \bR_{t_2})$. Let $m(t,t')=t-t'=t_1$, then we have
\[F\star G\cong (m^{-1}F)\circ G.\]
In fact, taking $t'=t_2$, we can prove the isomorphism by the proper base change and the projection formula since $s_t^2(t_1,t_2)=t$. 

However, convolution involves spaces of lower dimension. Therefore, we prefer to use convolution in this paper. More important, in geometric applications, the factor $\bR_t$ will play the role of action. Then, convolutions are more helpful for us to look at action information.
\end{RMK}

Before going into further discussion, let us review the notion of semi-orthogonal decomposition of a triangulated category. 

Let $\mathcal{T}$ be a triangulated category and $\mathcal{C}$ a thick full triangulated subcategory of $\mathcal{T}$. The left semi-orthogonal of $\mathcal{C}$ is defined by 
\begin{equation}
    ^\perp \mathcal{C}  \coloneqq \{ X\in \mathcal{T} : \HOM_{\mathcal{T}}(X,Y)=0, \forall Y\in \mathcal{C}   \}.\label{definitionofsemiorthcomplement}
\end{equation}

One can show that the following proposition holds, see \cite[Chapter 4 and Exercise 10.15.]{KS2006}.
\begin{Prop}\label{semiorthcomplement} Using the above notation, we have the following three equivalent properties:
\begin{enumerate}[fullwidth]
    \item The inclusion $ \mathcal{C} \rightarrow \mathcal{T}$ admits a left adjoint functor $L:\mathcal{T}\rightarrow \mathcal{C}$.
    \item There is an equivalence $\mathcal{T}/\mathcal{C} \xrightarrow{\cong}      {^\perp}\mathcal{C} $, where $\mathcal{T}/\mathcal{N}$ is the Verdier localization.
    \item There are two functors $P,Q:\mathcal{T} \rightarrow \mathcal{T}$ such that $\forall X\in\mathcal{T}$, we have the distinguished triangle:
    \[P(X)\rightarrow X \rightarrow Q(X) \xrightarrow{+1}\]
    such that $P(X)\in {^\perp} \mathcal{C}$, and $Q(X)\in \mathcal{C}$.
\end{enumerate}
In this situation, we say one of these data gives a left semi-orthogonal decomposition of $\mathcal{T}$. One can verify, if one of the conditions is satisfied, that $P^2\cong P$, and $Q^2\cong Q$. $P,\,Q$ are called a pair of projectors associated to $\mathcal{C}$.
\end{Prop}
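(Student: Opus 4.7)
This is a classical result on left semi-orthogonal decompositions, so the plan is to prove the cyclic chain of implications $(1) \Rightarrow (3) \Rightarrow (2) \Rightarrow (1)$. Throughout I will use repeatedly that $\mathcal{C}$ is closed under shifts, which implies $\HOM_\mathcal{T}({}^\perp \mathcal{C}, \mathcal{C}[k]) = 0$ for every $k \in \bZ$.

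For $(1) \Rightarrow (3)$, let $\iota : \mathcal{C} \hookrightarrow \mathcal{T}$ be the inclusion with left adjoint $L$ and unit $\eta_X : X \to \iota L X$. Set $Q := \iota L$ and define $P(X)$ as the fiber of $\eta_X$, fitting into a functorial distinguished triangle $P(X) \to X \to Q(X) \xrightarrow{+1}$. Applying $\HOM_\mathcal{T}(-, Y)$ for $Y \in \mathcal{C}$ and using the adjunction isomorphism $\HOM_\mathcal{T}(Q(X), Y) \cong \HOM_\mathcal{T}(X, Y)$ together with its shifted version shows $\HOM_\mathcal{T}(P(X), Y) = 0$, so $P(X) \in {}^\perp \mathcal{C}$.

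For $(3) \Rightarrow (2)$, first note that $P|_{\mathcal{C}} = 0$: for $Y \in \mathcal{C}$ both $P(Y) \to Y$ and $Q(Y)[-1] \to P(Y)$ lie in $\HOM_\mathcal{T}({}^\perp \mathcal{C}, \mathcal{C}) = 0$, so $P(Y) = 0$. Consequently $P$ carries any morphism whose cone is in $\mathcal{C}$ to an isomorphism, and the universal property of the Verdier quotient yields a descent $\bar P : \mathcal{T}/\mathcal{C} \to {}^\perp \mathcal{C}$. The restriction $\pi|_{{}^\perp \mathcal{C}}$ of the quotient functor is a quasi-inverse: on ${}^\perp \mathcal{C}$ the same vanishing forces $P(X) \cong X$, while for arbitrary $X$ the triangle gives $\pi(P(X)) \cong \pi(X)$ in $\mathcal{T}/\mathcal{C}$.

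For $(2) \Rightarrow (1)$, let $F : \mathcal{T}/\mathcal{C} \xrightarrow{\sim} {}^\perp \mathcal{C}$ be the inverse equivalence, let $\iota' : {}^\perp \mathcal{C} \hookrightarrow \mathcal{T}$, and set $P := \iota' F \pi$. A calculus-of-fractions argument combined with the $\HOM$-vanishing yields $\HOM_{\mathcal{T}/\mathcal{C}}(A, B) = \HOM_\mathcal{T}(A, B)$ whenever $A \in {}^\perp \mathcal{C}$, so the canonical isomorphism $\pi(P(X)) \cong \pi(X)$ lifts uniquely to a morphism $P(X) \to X$ in $\mathcal{T}$; its cone $Q(X)$ becomes $0$ in $\mathcal{T}/\mathcal{C}$ and hence (by thickness of $\mathcal{C}$) lies in $\mathcal{C}$. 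Applying $\HOM_\mathcal{T}(-, Y)$ for $Y \in \mathcal{C}$ to the resulting triangle and using the $\HOM$-vanishing gives $\HOM_\mathcal{T}(X, Y) \cong \HOM_\mathcal{T}(Q(X), Y)$, exhibiting $Q$ as the left adjoint to $\iota$. The main technical obstacle throughout is precisely this $\HOM$-identity: once $P(X) \in {}^\perp \mathcal{C}$ and $Q(X) \in \mathcal{C}$ are imposed, the triangle $P(X) \to X \to Q(X)$ is unique up to unique isomorphism, which simultaneously delivers functoriality of $P$ and $Q$ and forces the idempotencies $P^2 \cong P$, $Q^2 \cong Q$ (obtained by applying $P$ or $Q$ to the triangle and invoking $P|_{\mathcal{C}} = 0$).
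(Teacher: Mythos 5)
The paper itself does not prove this proposition; it is quoted from Kashiwara--Schapira's \emph{Categories and Sheaves} (Chapter 4 and Exercise 10.15), so your argument can only be measured against the standard textbook one, which it essentially reproduces: $(1)\Rightarrow(3)$ via the unit of the adjunction and the long exact $\HOM$-sequence, $(3)\Rightarrow(2)$ by descending $P$ to the Verdier quotient, $(2)\Rightarrow(1)$ via the bijectivity of $\HOM_{\mathcal{T}}(A,B)\to\HOM_{\mathcal{T}/\mathcal{C}}(A,B)$ for $A\in{}^\perp\mathcal{C}$, and the closing uniqueness remark that restores functoriality and the idempotencies. The outline and all the key lemmas are right.

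There is, however, one step that is wrong as written. In $(3)\Rightarrow(2)$ you justify $P|_{\mathcal{C}}=0$ by asserting that both $P(Y)\to Y$ and $Q(Y)[-1]\to P(Y)$ lie in $\HOM_{\mathcal{T}}({}^\perp\mathcal{C},\mathcal{C})=0$. The first does; the second does not: it is a morphism from $Q(Y)[-1]\in\mathcal{C}$ to $P(Y)\in{}^\perp\mathcal{C}$, so it lives in $\HOM(\mathcal{C},{}^\perp\mathcal{C})$, which is precisely the gluing Hom of a semi-orthogonal decomposition and has no reason to vanish. The conclusion $P(Y)=0$ is still true, but needs a different one-line argument: since $P(Y)\to Y$ vanishes, exactness of $\HOM(P(Y),-)$ applied to the rotated triangle $Q(Y)[-1]\to P(Y)\to Y$ shows that $\mathrm{id}_{P(Y)}$ factors through a morphism $P(Y)\to Q(Y)[-1]$, and \emph{that} morphism does lie in $\HOM({}^\perp\mathcal{C},\mathcal{C})=0$, whence $\mathrm{id}_{P(Y)}=0$. (Equivalently: $P(Y)[1]$ is a direct summand of $Q(Y)$, hence lies in $\mathcal{C}$ by thickness, hence in $\mathcal{C}\cap{}^\perp\mathcal{C}=0$.) With this repair the rest goes through; I would only add that statement $(2)$ must be read as saying that the canonical functor ${}^\perp\mathcal{C}\hookrightarrow\mathcal{T}\to\mathcal{T}/\mathcal{C}$ is an equivalence, as your construction of $F$ in $(2)\Rightarrow(1)$ implicitly assumes.
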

Now, let $\mathcal{T}=D(X\times\bR_t)$, and $\mathcal{C}=\left\{F: SS(F)\subset \{\tau \leq 0\}\right\}$. The triangulated inequality of microsupport shows that $\mathcal{C}$ is a thick full triangulated subcategory of $\mathcal{T}$. Tamarkin constructs a pair of projectors associated to $\mathcal{C}$ given by convolution:
\begin{Thm}[\cite{tamarkin2013}]\label{Tamarkin projector}
  The functors $F \mapsto\bK_{[0,\infty)}\star  F $, $F \mapsto \bK_{(0,\infty)}[1]\star F$ on $D(X \times\bR_t)$
and  the excision triangle,
\[ \bK_{[0,\infty)} \rightarrow \bK_{0} \rightarrow  \bK_{(0,\infty)}[1]\xrightarrow{+1},\]
 give a left semi-orthogonal decomposition of $D(X\times\bR_t)$ associated to $\mathcal{C}$. Namely, for $F\in D(X \times\bR_t)$
we have the distinguished triangle
\begin{equation}
\bK_{[0,\infty)}\star  F\rightarrow F \rightarrow \bK_{(0,\infty)}[1]\star F \xrightarrow{+1}, \label{Tamarkinorthgonalcomplement}  
\end{equation}
with $\bK_{[0,\infty)}\star F\in {^\perp} \mathcal{C}$, $\bK_{(0,\infty)}[1]\star  F\in \mathcal{C}$.
\end{Thm}
One can also see \cite[Proposition 4.19]{GS2014} for a proof and some generalizations of the proposition.

\begin{Def}\label{tamarkincategory} We define the Tamarkin category as the following left semi-orthogonal complement:
\[\mathcal{D}(X)={^\perp} \left\{F: SS(F)\subset \{\tau \leq 0\}\right\}\cong D(X\times \bR) /\left\{F: SS(F)\subset \{\tau \leq 0\}\right\}.\]
By \autoref{semiorthcomplement} and \eqref{Tamarkinorthgonalcomplement} , $F\in D(X\times \bR)$ is in $\mathcal{D}(X)$ if and only if 
\begin{equation}\label{canonical rep of objects in  the Tamarkin category}
F \cong \bK_{[0,\infty)}\underset{\pt}\star  F\cong  \bK_{\Delta_{X^2}\times [0,\infty)}\underset{X}\star F.\end{equation}
\end{Def}
Consequently, the convolution functor $\bK_{\Delta_{X^2}\times [0,\infty)}\underset{X}\star$ of the Tamarkin category $\mathcal{D}(X)$ coincides with the identity functor.

For $F\in \mathcal{D}(X)$, one can show $SS(F)\subset \{\tau \geq 0\}$ using microsupport estimates we mentioned last subsection, see \cite[Proposition 4.17]{GS2014}.

To build microlocal kernels, we follow Chiu's construction and use the Fourier-Sato transform, which is a sheaf-theoretic analogue of the Fourier transform. The Fourier-Sato transform defines a functor $D(V) \rightarrow D(V^*)$, where $V$ is a real vector space and $V^*$ is the dual of $V$. One can see \cite[Section 3.7, Section 5.5]{KS90} for more details. We mention that the Fourier-Sato transform gives an equivalence between $\bR_{> 0}$-equivariant sheaves on $V$ and $V^*$. Tamarkin introduced a new version of the Fourier transform on the category $\mathcal{D}(V)$, which also works for non-$\bR_{> 0}$-equivariant sheaves. We call it the Fourier-Sato-Tamarkin transform. For the relation between the different versions of Fourier transforms, we refer to \cite{FTDAgnolo,radonHonghao}.

Let $Leg(V)=\lbrace (z,\zeta,t): t+\langle z,\zeta\rangle\geq 0 \rbrace \subset V\times V^*\times \bR_t$, we consider ${\bK}_{Leg(V)}\in \cD(V\times V^*)$. 
\begin{Def}The Fourier-Sato-Tamarkin transform is defined as the functor
\begin{align*}
    &\text{FT}:  {D}(X\times V_z\times\bR) \rightarrow  \cD(X\times V^*_{\zeta}),\\
    &\text{FT}(F)=\widehat{F}\coloneqq F \underset{V_z}{\star} {\bK}_{Leg(V)}[\dim V].
\end{align*}
One can see that the restriction of $\text{FT}$ on $\cD(X\times V_z)$ is an equivalence of categories in \cite[Theorem 3.5]{tamarkin2013}.

Sometimes, for $F\in D(V_{\zeta}^*)$, we will use the notation
\begin{equation}\label{already set}
 \widetriangle{F}\coloneqq  {\bK}_{Leg(V)}[\dim V]\underset{V_\zeta^*}{\circ} F.\end{equation}
\end{Def}
Geometrically, the set $Leg(V)$ is associated with the Legendre transform between $V$ and $V^*$. The important thing for us is the microsupport estimate under the Fourier-Sato-Tamarkin transform. Combining Theorem 3.5 and Theorem 3.6 (and its proof) of \cite{tamarkin2013}, we have
\begin{Thm}
Let $\varphi\colon J^1 (X\times V) \rightarrow J^1(X\times V^* )$ be the map  $\varphi(\bq,\bp,z,\zeta,t)=(\bq,\bp,\zeta,-z,t-\langle z,\zeta\rangle)$, where we identify $V^{**}$ with $V$ naturally. Then for $F\in \cD(X\times V)$, we have the microsupport relation:
\begin{equation}
  \mu s_L (\widehat{F})=\varphi( \mu s_L (F)   ).  \label{microsupportfouriertransform}
\end{equation}
\end{Thm}
\begin{proof}
The original statement of \cite[Theorem 3.6]{tamarkin2013} claim that $\mu s (\widehat{F})\subset \varphi_0( \mu s (F)   )$, here $\varphi_0(\bq,\bp,z,\zeta)=(\bq,\bp,\zeta,-z)$. However, the proof indicates that the inclusion can be lifted to $J^1 (X\times V)$ and $\varphi$, i.e.:
\[  \mu s_L (\widehat{F})\subset \varphi( \mu s_L (F)   ). \]
Moreover, Theorem 3.5 in loc. cit. shows that the Fourier transform $F\mapsto \widehat{F}$ has an inverse which is given by $G\mapsto \check{G}=G\underset{V_\zeta^*}{\star} \bK_{Leg'(V)}$ where $Leg'(V)=\lbrace (\zeta,z,t): t\geq \langle z,\zeta\rangle \rbrace \subset V^*\times V\times \bR_t$. We also have an estimate
\[  \mu s_L (\check{G})\subset \varphi^{-1}( \mu s_L (G)   ).  \]
Then the equal of \eqref{microsupportfouriertransform} follows by taking $G=\widehat{F}$.
\end{proof}

\subsection{Guillermou-Kashiwara-Schapira Sheaf Quantization}\label{GKSsection}As a sheaf pattern of Hamiltonian action, we introduce the Guillermou-Kashiwara-Schapira (GKS for short) sheaf quantization as a basic tool here, see \cite{GKS2012} for more details. 

Consider $\dot{T}^*Y$ as a symplectic manifold equipped with the Liouville symplectic form and with a $\bR_{>0}$-action by dilation along the cotangent fibers. If $\varphi:I\times\dot{T}^*Y  \rightarrow \dot{T}^*Y$ is a $\bR_{>0}$-equivariant symplectic isotopy, one can show that it must be Hamiltonian with a $\bR_{>0}$-equivariant Hamiltonian function $H$.

Consider its total graph
\begin{equation}
  \Lambda_{{\varphi}}\coloneqq \left\lbrace (z, - {H_z}\circ{\varphi}_z(\bq,\bp)  ,(\bq,-\bp),{\varphi}_z(\bq,\bp) ) : (\bq,\bp) \in \dot{T}^*Y, z\in I \right \rbrace. \label{totalgraph} 
\end{equation}

Then Guillermou, Kashiwara, and Schapira proved the following theorem:
\begin{Thm}[{\cite[Theorem 3.7]{GKS2012}}]\label{GKS} Using the above notation, we have a sheaf $K=K({\varphi}) \in D(I\times Y^2)$ such that

\begin{enumerate*}[fullwidth, itemjoin={\quad}]
\item $\dot{SS}(K)= \Lambda_{{\varphi}}$,
\item $K_0=\bK_{\Delta_{Y^2}}$,
\end{enumerate*}
where $K_{z}=K|_{\{z\}\times Y^2}$. 

If we set $K_z^{-1}=v^{-1}\RHHOM(K_z, \omega_Y\dboxtimes \bK_Y)$, $v(\bq_1,\bq_2)=(\bq_2,\bq_1),\,\bq_1,\bq_2\in Y,\,z\in I $, then
\begin{enumerate}[fullwidth, label={\alph*}$ )$]
\item $\textnormal{supp}(K)\rightrightarrows I \times Y $ are both proper,
\item $K_z\circ K_z^{-1}\cong  K_z^{-1}\circ K_z\cong \bK_{\Delta_{Y^2}}$,
\item $K$ is unique up to a unique isomorphism. 
\end{enumerate}
Consequently, $F\mapsto F\circ K_z$, $D(Y)\rightarrow D(Y)$ is an equivalence of categories for all $z\in I$, whose quasi inverse is $F\circ K_z^{-1}$.\end{Thm}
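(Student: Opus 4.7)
The plan is to establish uniqueness of $K$ first, then construct $K$ locally via generating functions for the Hamiltonian isotopy $\varphi$, and use uniqueness to glue local models into a global kernel; the invertibility, properness, and composition properties then follow formally.

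For uniqueness, given two candidates $K, K'$ satisfying the conditions, I would compare them through the composition-like construction $K' \circ K^\vee$ on $I \times Y \times Y$, where $K^\vee = v^{-1}\RHHOM(K, \omega_Y \dboxtimes \bK_Y)$ is the formal Verdier dual. By the microsupport estimates of \autoref{functional estimate} and its corollary, $SS(K' \circ K^\vee)$ is contained in $\Lambda_{\text{id}} \cup 0_{I \times Y \times Y}$, i.e.\ in the conormal to the diagonal (times $I$). Sheaves with such microsupport are locally constant along $I$ on the diagonal piece, and the initial condition at $z=0$ forces $K' \circ K^\vee \cong \bK_{\Delta_{Y^2} \times I}$, from which one extracts a canonical isomorphism $K \xrightarrow{\sim} K'$. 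The uniqueness of this isomorphism follows from the same rigidity applied to its automorphisms (their restrictions at $z=0$ are scalar, hence globally scalar).

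For local existence near a point $(z_0, \bq_0, \bp_0)$, the $\bR_{>0}$-equivariance of $\varphi$ allows one to find a locally defined homogeneous generating function $S(z, \bq, \bq', \eta)$ on $I \times Y \times Y \times \bR^N$, of degree $1$ in $\eta$, parametrising $\Lambda_\varphi$ through the relations $\partial_\eta S = 0$, $\bp = \partial_\bq S$, $\bp' = -\partial_{\bq'} S$, $\sigma = \partial_z S$. I would then set $K^{loc} = {\textnormal{R}}\pi_{\eta!} \bK_{\{S \geq 0\}}[N]$ and verify, using the proper pushforward estimate, that $SS(K^{loc})$ is exactly $\Lambda_\varphi \cup 0$ on its domain. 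At $z=0$ the trivial isotopy has generating function $S = \langle \eta, \bq - \bq'\rangle$, and a direct calculation confirms $K^{loc}_0 \cong \bK_{\Delta_{Y^2}}$. By the uniqueness step above, local kernels on overlapping domains are canonically isomorphic and glue to a global $K \in D(I\times Y \times Y)$.

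The remaining properties reduce to uniqueness. Since $\varphi^{-1}$ is also an $\bR_{>0}$-equivariant Hamiltonian isotopy with total graph $\Lambda_{\varphi^{-1}}$, its sheaf quantization $\widetilde{K}$ exists by the same construction; then $K \circ \widetilde{K}$ is a sheaf quantization of the identity isotopy, hence canonically identifies with $\bK_{\Delta_{Y^2}\times I}$, while a duality argument identifies $\widetilde{K}_z$ with the dual $K_z^{-1}$ of the statement. Properness of $\supp(K) \to I\times Y$ from either factor follows because $\Lambda_\varphi$, being the graph of a slicewise diffeomorphism of $\dot{T}^*Y$, projects properly to $I\times \dot{T}^*Y$; this, combined with the microsupport constraint, bounds $\supp(K)$ over compact sets. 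The hard part will be the local construction: producing a homogeneous generating function with the required shape for an arbitrary $\bR_{>0}$-equivariant isotopy uniformly in the cone direction, and then confirming that the associated sheaf has microsupport \emph{exactly} $\Lambda_\varphi \cup 0$ (and no larger), is the technically delicate core of the argument. Once this is in place, uniqueness handles the global gluing, invertibility, and functorial assertions automatically.
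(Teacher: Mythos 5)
First, note that the paper itself contains no proof of this statement: it is imported verbatim as \cite[Theorem 3.7]{GKS2012}, so the only meaningful comparison is with the Guillermou--Kashiwara--Schapira argument. Your overall architecture --- rigidity/uniqueness via microsupport estimates, local-in-$z$ existence, gluing by composition, and then deriving a), b), c) formally --- is exactly the shape of their proof, and your uniqueness step is essentially their Proposition 3.2/Lemma on unique extension: the relative composition $K'\circ_I K^\vee$ has microsupport in $0_I\times T^*(Y\times Y)$ (the $\Lambda\circ 0$ cross-terms vanish because $\Lambda_{\widehat\varphi}$ has no zero covectors in either $Y$-factor), hence is pulled back from $Y^2$, and the initial condition pins it down. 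Two caveats there: the composition estimate already requires the properness of a), which you assert but do not prove --- and it does not follow just from ``$\Lambda_{\widehat\varphi}$ projects properly'', because $SS(K)$ is only contained in $\Lambda_{\widehat\varphi}\cup 0_{I\times Y^2}$ and the zero-section part puts no constraint on $\supp(K)$; GKS need a separate connectedness/deformation argument starting from $\supp(K_0)=\Delta_{Y^2}$.

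The genuine gap is the local existence step. You propose a homogeneous generating function $S(z,\bq,\bq',\eta)$ with auxiliary variables and $K^{loc}={\textnormal{R}}\pi_{\eta!}\bK_{\{S\geq 0\}}[N]$. This is \emph{not} what GKS do, and for good reason: (i) near $z=0$ the Lagrangian to be generated is a deformation of $\dot{T}^*_{\Delta}(Y^2)$, which does not project submersively to the base, so the classical ``graph of $dS$ for short time'' mechanism fails and the existence of homogeneous GFQI-type generating functions for an arbitrary $\bR_{>0}$-equivariant isotopy of $\dot{T}^*Y$ for a general manifold $Y$ is not a black box you can invoke --- it is essentially only available for $Y=\bR^n$ (Th\'eret/Sikorav); (ii) $\pi_\eta$ is not proper on $\{S\geq 0\}$ (the fibers are cones), so you must use the non-proper pushforward estimate, which only bounds $SS(K^{loc})$ by a closure and does not by itself give the required inclusion $SS(K^{loc})\subset\Lambda_{\widehat\varphi}\cup 0$, let alone equality. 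You correctly flag this as ``the technically delicate core'', but a proof proposal cannot leave it there: this is the theorem. GKS instead decompose the Hamiltonian into pieces supported in small cones of $\dot{T}^*Y$, so that each short-time flow moves the conormal of the diagonal to the conormal of a nearby hypersurface, quantized by a constant sheaf on the domain it bounds --- no auxiliary variables, and the microsupport is computed exactly. If you want to keep the generating-function route (which is the one Chiu uses for the explicit rotation kernel $\mathcal{S}$ in this paper), you must either restrict to $Y=X\times\bR_t$ with $X$ a vector space and supply the homogeneous generating function construction explicitly, or fall back on the GKS conic decomposition for the general statement.
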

For $F\in D(Y)$, $z_0\in I$, we have 
\begin{equation}\label{quantization commute with microsupport}
    \dot{SS}(F\circ K_{z_0})=\varphi_{z_0}(\dot{SS}(F)).
\end{equation}
It means that, geometrically, $\circ K_z$ acts as the Hamiltonian isotopy $\varphi$.
\begin{RMK}Our convention for \eqref{totalgraph} is different from \cite{GKS2012}. Specifically, \eqref{totalgraph} is $\Lambda_{{\varphi^{-1}}}$ in loc. cit., and then our $K_z$ should be $K_z^{-1}$ in loc. cit. We choose such a convention for our convenience in adapting Chiu's proof for \autoref{Existence} without causing further consistency problems.
\end{RMK}

Let us describe two situations where we will use the theorem.
\begin{enumerate}[fullwidth]
\item[I)] Let $\varphi: I\times T^*X \rightarrow T^*X$ be a compactly supported Hamiltonian isotopy. For $Y=X\times \bR_t$, one can lift $\varphi$ to $\widehat{\varphi}: I\times\dot{T}^*Y  \rightarrow \dot{T}^*Y $. Specifically, we have the following:
\begin{Prop}[{\cite[Proposition A.6]{GKS2012}}]\label{cone map}Let $\varphi: I\times T^*X  \rightarrow T^*X$ be a compactly supported Hamiltonian isotopy, whose Hamiltonian function is $H\in C^\infty( I\times T^*X )$. 

There is a $\bR_{>0}$-equivariant Hamiltonian isotopy $\widehat{\varphi}:I\times\dot{T}^*Y  \rightarrow  \dot{T}^*Y $ such that:

\begin{enumerate}[fullwidth,label=\emph{\alph*}$ )$]
\item The function $\widehat{H}(z,\bq,t,\bp,\tau)=\tau H(z,\bq,\bp/\tau)$ is a Hamiltonian function of $\widehat{\varphi}$ on $\{\tau\neq 0\}$.
\item The lifting $\widehat{\varphi}$ commutes with both the symplectization and the Tamarkin's cone map.
\item We can take 
\begin{align*}
&\widehat{\varphi}(z,\bq,t,\bp,\tau)=(\tau\cdot\varphi(z,\bq,\bp/\tau), t-S_{H}(z,\bq,\bp/\tau)), &\tau \neq 0,\\
&\widehat{\varphi}(z,\bq,t,\bp,0)=(\bq,\bp, t+v(z),0), &\tau =0,
\end{align*}
where $u\in C^\infty( I\times T^*X ), v\in C^\infty(I) $, and  $S_{H}(z,\bq,\bp)=\int_0^z [\alpha(X_{H_\lambda})-H_\lambda]\circ \varphi^\lambda_{H}(\bq,\bp)d\lambda$ is the symplectic action function.
\end{enumerate}
We call this $\widehat{\varphi}$ or $\widehat{\varphi}_z$ the conification of $\varphi$. 
\end{Prop}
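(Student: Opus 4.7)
The plan is to verify directly that the candidate Hamiltonian $\widehat{H}(z,\bq,t,\bp,\tau) = \tau H(z,\bq,\bp/\tau)$ produces a flow with all the required properties. First I would check that $\widehat{H}$ is well-defined and smooth on $\dot{T}^*Y$. The formula is immediately smooth on $\{\tau\neq 0\}$, and across $\{\tau=0\}\cap \dot T^*Y$ (i.e.\ $\bp\neq 0$) one uses compact support of $H$ in the cotangent fibre to see that $H(z,\bq,\bp/\tau)=0$ in a neighbourhood of that stratum; hence $\widehat{H}$ vanishes there and extends smoothly. Since $\widehat{H}(z,\bq,t,\lambda\bp,\lambda\tau)=\lambda\widehat{H}(z,\bq,t,\bp,\tau)$, the function $\widehat H$ is homogeneous of degree one, and therefore its Hamiltonian flow is automatically $\bR_{>0}$-equivariant, which is the key structural requirement. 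This already gives (a) tautologically.

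Next I would compute $X_{\widehat H}$ with respect to $\omega = d\bp\wedge d\bq + d\tau\wedge dt$. Since $\partial_t \widehat H=0$, the coordinate $\tau$ is preserved by the flow. A direct calculation of the partial derivatives of $\widehat H$ yields, on $\{\tau\neq 0\}$, the system
\[
\dot\bq \,=\, \partial_\bp H(z,\bq,\bp/\tau), \qquad \dot\bp\,=\,-\tau\,\partial_\bq H(z,\bq,\bp/\tau), \qquad \dot t\,=\, H(z,\bq,\bp/\tau) - \tfrac{\bp}{\tau}\,\partial_\bp H(z,\bq,\bp/\tau).
\]
Setting $(\bar\bq,\bar\bp)\coloneqq(\bq,\bp/\tau)=\rho(\bq,t,\bp,\tau)$ and using that $\tau$ is constant along the flow, the first two equations become $\dot{\bar\bq}=\partial_{\bar\bp}H(z,\bar\bq,\bar\bp)$, $\dot{\bar\bp}=-\partial_{\bar\bq}H(z,\bar\bq,\bar\bp)$, which is Hamilton's equation for $\varphi_z$ on $T^*X$. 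Hence $(\bq(z),\bp(z)/\tau)=\varphi_z(\bq_0,\bp_0/\tau)$, so that the first and third slots of $\widehat\varphi_z$ take the form $(\tau\cdot\varphi_z(\bq,\bp/\tau),\tau)$ announced in (c). The same argument shows that on $\{\tau=0\}\cap\dot T^*Y$, where $\widehat H\equiv 0$ locally, only the $t$-coordinate can move, giving the piecewise formula with some $v(z)$.

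The function $u$ in (c) is then obtained by integrating the $\dot t$-equation along the trajectory. Recognising $\tfrac{\bp}{\tau}\partial_\bp H - H$ at the reduced point as $\alpha(X_H)-H$ (for $\alpha=\bp d\bq$ the Liouville form on $T^*X$), integration from $0$ to $z$ reproduces the action integral $S_H(z,\bq,\bp)=\int_0^z[\alpha(X_{H_x})-H_x]\circ\phi_H^x(\bq,\bp)\,dx$ up to the overall sign convention chosen for Hamiltonian vector fields. This settles (c). Part (b) is now automatic: commutation with the cone map $\rho$ is exactly the statement that the reduced $(\bar\bq,\bar\bp)$-evolution is $\varphi_z$, independent of $t$ and $\tau$; and commutation with the symplectization $q:\dot T^*Y\to J^1(X)$ follows from the $\bR_{>0}$-equivariance proved in step one, since $J^1(X)=\dot T^*Y/\bR_{>0}$ (or more precisely $q$ identifies the hypersurface $\{\tau=1\}$ with $J^1(X)$, and $\widehat\varphi$ preserves $\{\tau=1\}$).

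The only genuine technical point is the smooth extension of $\widehat H$ across $\{\tau=0\}\cap \dot T^*Y$; without compact support of $H$, the naive formula $\tau H(-,\rho(-))$ would be singular and one could not expect a global isotopy on $\dot T^*Y$. Once smoothness is granted, everything else is a Darboux-coordinate computation and an identification of the $t$-component with the classical symplectic action.
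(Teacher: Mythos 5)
Your proposal is correct and follows essentially the same route as the source: the paper gives no independent proof of this proposition (it is quoted from \cite[Proposition A.6]{GKS2012}), and the argument there is exactly this direct verification — take the degree-one homogeneous Hamiltonian $\widehat H=\tau H(-,\rho(-))$, extend it by zero across $\{\tau=0\}\cap\dot T^*Y$ using compact support, and integrate Hamilton's equations to identify the $t$-component with the symplectic action. Your two side remarks (the sign of $u$ depends on the convention for $X_H$, consistent with the antipodal twist in \eqref{totalgraph}, and your construction simply yields $v\equiv 0$, a permissible special case of the stated formula) are accurate and do not affect the validity of the argument.
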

\begin{RMK}We notice that it is easy to lift $\varphi$ to $T^*X\times T^*_{\tau>0}\bR_t$ without the compactly supported assumption, however this is not enough to apply the Guillermou-Kashiwara-Schapira theorem. If we want to lift $\varphi$ to $\dot{T}^*(X\times \bR_t)$, we need the compactly supported condition.
\end{RMK}  
Now, applying \autoref{GKS} to $\widehat{\varphi}$, we obtain a sheaf $K(\widehat{\varphi}) \in D(I\times X^2\times \bR^2_t)$. 

In our later application, we prefer to use only one $t$-variable, and to use convolution. This is possible as follows. Consider $m(t_1,t_2)=t_2-t_1$, then \cite[Corollary 2.3.2]{guillermou2019sheaves} shows that there is a unique $\mathcal{K}(\widehat{\varphi}) \in D(I\times X^2 \times \bR_t)$ such that $K(\widehat{\varphi})\cong m^{-1}\mathcal{K}(\widehat{\varphi})$, and $\mathcal{K}(\widehat{\varphi})\cong{\textnormal{R}}m_!K(\widehat{\varphi})$.
Then we can take $\mathcal{K}(\widehat{\varphi})$ as the sheaf quantization of $\varphi$. One can show that, for $F\in D(X\times \bR)$, we have $F\circ K(\widehat{\varphi}_z)\cong F\star \mathcal{K}(\widehat{\varphi}_z) $, see \autoref{remark convolution and composition comparision}. 

By the commutativity of the lifting with symplectization, we have the following estimates for the Legendrian microsupport and sectional microsupport of $\mathcal{K}(\widehat{\varphi})$:
\begin{align}\label{microsupportofsheafquantization}
\begin{aligned}
    {\mu}s_L(\mathcal{K}(\widehat{\varphi})) &\subset \lbrace (z,- H(\bq,\bp), \bq, -\bp,\varphi_z(\bq,\bp) ,-S_{H}(z,\bq,\bp) ):(z,\bq,\bp)\in  I\times T^*X\rbrace, \\
   \mu s(\mathcal{K}(\widehat{\varphi}))&\subset \lbrace (z,- H(\bq,\bp), \bq, -\bp,\varphi_z(\bq,\bp) ):(z,\bq,\bp)\in  I\times T^*X \rbrace.       
\end{aligned}
\end{align}
From the point of view of \eqref{quantization commute with microsupport}, for $F\in D(X\times \bR)$, we have
\begin{equation}\label{microlocally gks act as maps}
    \mu s(F\star \mathcal{K}(\widehat{\varphi}_z))=\mu s(F\circ{K}( \widehat{\varphi}_z))=\varphi_z(\mu s(F)).
\end{equation}
\item[II)] Let $\varphi: I\times T^*X\times S^1 \rightarrow T^*X\times S^1$ be a contact isotopy of $T^*X\times S^1$ with a contact Hamiltonian $H\in C^\infty(I\times T^*X\times S^1 )$. One can lift $\varphi$ to a $\bZ$-equivariant contact isotopy ${\varphi'}$ of $J^1(X)=T^*X\times \bR_t$, where $\bZ$ acts by shifting $t$. Here, by $\bZ$-equivariant, we mean that $J^1(\tnT_{k}){\varphi'}={\varphi'}J^1(\tnT_{k})$ for $k\in \bZ$, where $J^1(\tnT_{k})(\bq,\bp,t)=(\bq,\bp,t+k)$. 

\begin{RMK}\label{contact isotopy quantization rmk}
In the symplectic case the Hamiltonian $H$ does not depend on $t$, and does commute with $\tnT'_c$ for all real number $c$. In the contact case, $\varphi'$ commute with $\tnT'_c$ only when for $c=k\in \bZ$.
\end{RMK}
Then it is easy to lift $\varphi'$ to the symplectization, $T^*X\times T^*_{\tau>0}\bR_t$, of $J^1(X)$ to a $ \bZ\times \bR_{>0}$ equivariant Hamiltonian isotopy $\widehat{\varphi'}: I\times T^*X\times T^*_{\tau>0}\bR_t\rightarrow T^*X\times T^*_{\tau>0}\bR_t$. Here, by $\bZ$-equivariance, we mean that $d\tnT^*_{k}\widehat{\varphi'}=\widehat{\varphi'}d\tnT^*_{k}$ for $k\in\bZ$, where $d\tnT^*_{k}(\bq,\bp,t,\tau)=(\bq,\bp,t+k,\tau)$ is the cotangent map of the shifting map $\tnT_k(\bq,t)=(\bq,t+k)$.

Similarly to the symplectic case, the compactly supported condition is necessary to extend $\widehat{\varphi'}$ to whole $\dot{T}^*(X\times \bR_t)$. 

In this case, we still take the sheaf quantization ${K}={K}(\widehat{\varphi'})\in D(I\times X^2 \times \bR^2)$ of $\widehat{\varphi'}$ as sheaf quantization of $\varphi$. However now, since the contact Hamiltonian $H(\bq,\bp,t)$ will depend on the variable $t$, ${K}={K}(\widehat{\varphi'})$ is not pulled back from $D(I\times X^2 \times \bR)$ by $m$. So, we will work with compositions rather than convolutions.

The $\bZ$-equivariance is inherited by the sheaf ${K}(\widehat{\varphi'})$. Precisely, it means that \begin{equation}\label{Z-equi for contact isotopy}
{K}(\widehat{\varphi'})\circ \bK_{\Delta_{X^2}\times \{(t,t+k):t\in \bR\}}\cong \bK_{\Delta_{X^2}\times \{(t,t+k):t\in \bR\}}\circ {K}(\widehat{\varphi'}).    
\end{equation} This is due to $\bK_{\Delta_{X^2}\times \{(t,t+k):t\in \bR\}}=\bK_{\Gamma_{\tnT_k}}$ quantizes $d\tnT^*_{k}$, then we apply the uniqueness part of \autoref{GKS} to $\widehat{\varphi'}=d(\tnT^{-1}_{k})^*\widehat{\varphi'}d\tnT^*_{k}=d\tnT^*_{-k}\widehat{\varphi'}d\tnT^*_{k}$ to obtain the isomorphism \eqref{Z-equi for contact isotopy}.
\end{enumerate}

\subsection{Equivariant Sheaves}
Here, we review basic notions of equivariant sheaves. We refer to \cite{BernsteinLunts} for all details about the general theory of equivariant sheaves and equivariant derived categories. Suppose $G$ is a compact Lie group. For a manifold $X$ with a $G$ action $\rho: G\times X \rightarrow X$, a $G$-equivariant sheaf is a pair $(F, \psi)$ where $F\in Sh(X)$ and $\psi: \rho^{-1}F \cong \pi^{-1}_G F$ is an isomorphism of sheaves satisfying the cocycle conditions:
\[d_0^{-1}\psi \circ d_2^{-1}\psi=d_1^{-1}\psi, \quad s_0^{-1}\psi=\text{Id}_F,\]
where
\begin{align*}
  d_0(g,h,x)=(h,g^{-1}x),\quad
  d_1(g,h,x)=(gh,x),\quad
  d_2(g,h,x)=(g,x),\quad
  s_0(x)=(e,x).
\end{align*}
A sheaf morphism between two $G$-equivariant sheaves is equivariant if it commutes with the $\psi$'s.
We set $Sh_G(X)$ be the category of $G$-equivariant sheaves.
For example, when $X=\pt$, $Sh_G(X)\simeq 
{\bK}[G]-\Mod$, the category of all $G$-modules. The category of $G$-equivariant sheaves $Sh_G(X)$ is abelian. Moreover, Grothendieck proved in \cite{Tohoku} that when $G$ is finite, $Sh_G(X)$ admits enough injective objects. Therefore, the derived category $D(Sh_G(X))$ makes sense for finite groups, which is treated as a naive version of equivariant derived category of sheaves. 

For general topological groups, the naive version is not good as our expectation. A basic difference is that $\textnormal{Ext}^*_{D(Sh_G(X))}(\bK_X,\bK_X)$ is not isomorphic to the equivariant cohomology of $X$. A more serious problem is how to define 6-operations with correct adjunction properties.

To resolve these problems, we must use the equivariant derived category $D_G(X)$ defined by Burnstein-Lunts, in where the expected isomorphism holds, and the correct 6-operations live. 

For the compact Lie group $G$, there exists a universal bundle $EG$ and a classifying space $BG=G\backslash EG$, which are unique up to homotopy. Now, we have a diagram of topological spaces:
\[X\xleftarrow{p} X\times EG \xrightarrow{q} X\times _G EG.\]
\begin{Def}An object $F\in D_G(X)$ is a triple $F=(F_X, \overline{F},\beta_F)$, where $F_X\in D(X)$, $\overline{F}\in D(X\times_G EG)$, and $\beta_F: p^{-1}F_X\rightarrow q^{-1} \overline{F}$ is an isomorphism in $D(X\times EG)$. A morphism $\alpha: F\rightarrow H$ is a pair $(\alpha_X,\overline{\alpha})$ where $\alpha_X:F_X\rightarrow H_X$, $\overline{\alpha}: \overline{F}\rightarrow\overline{H}$, and a commutative diagram in $D(X\times EG)$:
\begin{center}
\begin{tikzcd}
p^{-1}F_X \arrow[rr, "\beta_F"] \arrow[d, "p^{-1}\alpha_X"'] &  & q^{-1}\overline{F} \arrow[d, "q^{-1}\overline{\alpha}"] \\
p^{-1}H_X \arrow[rr, "\beta_H"]                              &  & q^{-1}\overline{H}.                                     
\end{tikzcd}
\end{center}
\end{Def}
For example, the equivariant constant sheaf is given by $\bK_X^G=(\bK_X,\bK_{X\times _G EG}, \textnormal{Id}_{\bK_{EG}})$. 
The natural functor $D_G(X)\rightarrow D(X\times_G EG)$, $F=(F_X, \overline{F},\beta_F)\mapsto \overline{F}$ is fully faithful.

We have a forgetful functor $For: D_G(X)\rightarrow D(X)$ which is given by
\[F=(F_X, \overline{F},\beta_F)\mapsto F_X.\]
In general, for Lie subgroups $H\subset G$, we have a restriction functor $For: D_G(X)\rightarrow D_H(X)$, and the forgetful functor correspondence to the case $H$ trivial.

The microsupport of equivariant objects can be defined as follows:
\begin{Def}\label{def: equivariant sheaf microsupport}For an object $F=(F_X,\overline{F},\beta)\in D_G(X)$, where $F_X\in D(X)$, we {\em define} the microsupport of $F$ to be $SS(F)\coloneqq SS(F_X)$.
\end{Def}
This definition makes sense since the contractibility of $EG$ and \autoref{functional estimate}-(4).

For a $G$-map $f:X\rightarrow Y$ between smooth manifolds, we define maps induced from $f$ as follows:
\begin{center}
\begin{tikzcd}
X \arrow[d, "f"] & X\times EG \arrow[l, "p"'] \arrow[r, "q"] \arrow[d, "\hat{f}"] & X\times_G EG \arrow[d, "\overline{f}"] \\
Y                & Y\times EG \arrow[l, "p'"] \arrow[r, "q'"']                      & Y\times_G EG.                          
\end{tikzcd}
\end{center}
Then we can define 6-operations. For example, we have $\textnormal{R}f_{*}F=(\textnormal{R}f_{*}F_X,\textnormal{R}\overline{f}_*\bar{F},\textnormal{R}\hat{f}_{*}\beta_F)$.
\begin{Prop}All properties of the 6-operations hold in the equivariant case. The 6-operations commute with the forgetful functor. 
\end{Prop}
\begin{RMK}Since the 6-operations commute with the forgetful functor, we will frequently use the notation of non-equivariant 6-operations to denote the equivariant 6-operations without explicit emphases.
\end{RMK}
In the equivariant derived category, we can obtain the expected isomorphism:
\begin{equation}\label{equivariant cohomology equivariant hom}
\textnormal{Ext}^*_{D_G(X)}(\bK_X,\bK_X) \cong
\textnormal{Ext}^*_{D(X\times_G EG)}(\bK_{X\times_G EG},\bK_{X\times_G EG}) \cong H^*(X\times_G EG,\bK).  
\end{equation}

In particular, when $X=\text{pt}$ is a point, we have 
\begin{equation}\label{equivariant cohomology of point}
\textnormal{Ext}^*_{D_G(\pt)}(\bK,\bK)  \cong H^*(BG,\bK).    
\end{equation}
For example,
\begin{align}\label{equivariant cohomology of a point}
\begin{aligned}
    \textnormal{Ext}^*_{D_{\bZ/\ell}(\pt)}(\bK,\bK)\cong H^*(L^\infty_\ell,\bK)\cong \bK[u,\theta],
    \end{aligned}
\end{align}
where $L^\infty_\ell=S^\infty/(\bZ/\ell)$ is the infinite dimensional lens space, $\bK$ is a finite field of $\text{char}(\bK)|\ell$, $|u|=2$, $|\theta|=1$, and $\theta^2=ku$ ($k=0$ if $\ell$ is odd and $k=\ell/2$ otherwise). The computation can be found in \cite[Example 3E.2, Exercise 3E.1]{hatcher}.

\begin{RMK}In \cite{BernsteinLunts}, the authors use finite-dimensional approximations of $EG$ to define 6-operations. The reason is, classically, the 6-operations and related propositions (especially the proper base change) are demonstrated for finite (cohomological) dimensional locally compact Hausdorff spaces while $X \times_G EG$ is not in this class. However, in the framework of \cite{Properbasechange}, the authors introduce a relative notion called separated locally proper maps, for which a proper base change formula is true. In particular, our $\hat{f}$ and $\overline{f}$ are separated locally proper, and $\hat{f}_!$ and $\overline{f}_!$ have finite cohomological dimension. Consequently, we can provide simpler formula for the equivariant 6-operations, and they also work in the unbounded derived category.
\end{RMK}

For discrete groups $G$, both the naive and advanced versions are equivalent, i.e., $D(Sh_G(X))\simeq D_G(X)$. In particular, $D({\bK}[G]-Mod)\simeq D_G(\text{pt}) $. In practice, we will use them alternatively without mentioning explicitly. As a rule of convenience, we only write a lower subscript $G$ for all possible places to indicate that we are working on some version of equivariant categories.

\section{Projectors, Chiu-Tamarkin complex, and Capacities}\label{section: CT complex}

\subsection{Projectors Associated to Open Sets in \texorpdfstring{$T^*X$}{} }\label{section:UniquenessandExistence}
In this subsection, we are going to study the categories related to sheaves microsupported in an open set $U\subset T^*X$. Next, we will construct kernels of the projectors onto these categories. 

For a {\em closed} subset $Z \subset T^*X$ we define $\mathcal{D}_Z(X)$ as the full subcategory of $\mathcal{D}(X)$ consisting of the sheaves satisfying $\mu s(F) \subset Z$. For an {\em open} subset $U \subset T^*X$  we define  $\mathcal{D}_{U}(X)$ to be the left semi-orthogonal complement of $\mathcal{D}_{T^*X\setminus U}(X)$ in  $\mathcal{D}(X)$, i.e., $\mathcal{D}_{U}(X)={^\perp}\mathcal{D}_{T^*X\setminus U}(X)$.

Now we have a diagram of inclusions 
\begin{equation}
 \mathcal{D}_{T^*X\setminus U}(X) 
\hookrightarrow \mathcal{D}(X) \hookleftarrow \mathcal{D}_{U}(X) \label{inclusion diagram of categories supported in U}  
\end{equation}

Following Tamarkin and Chiu, we are looking for convolution kernels that represent microlocal projector functors and give the corresponding semi-orthogonal decomposition.
\begin{Def}\label{defadmissibledomains}We say $U$ is {\em $\bK$-admissible} if there is a distinguished triangle 
\[ P_U\rightarrow {\bK}_{\Delta_{X^2} \times[0,\infty)} \rightarrow Q_U \xrightarrow{+1},\]
in $\cD(X^2)$, such that the convolution functor $\star P_U $ is right adjoint to $\mathcal{D}_{U}(X) 
\hookrightarrow \mathcal{D}(X) $ and $\star Q_U$
is left adjoint to  $\mathcal{D}_{Z}(X) 
\hookrightarrow \mathcal{D}(X) $, i.e., 
\[\mathcal{D}_{Z}(X)  \xleftarrow{\star Q_U}\mathcal{D}(X) \xrightarrow{\star P_U}\mathcal{D}_{U}(X),    \]
are two microlocal projectors.

Such a pair of sheaves $(P_U,Q_U)$ together with the distinguished triangle give an orthogonal decomposition of $\mathcal{D}(X)$ by \autoref{semiorthcomplement}. We call the pair $(P_U,Q_U)$ {\em microlocal kernels} associated with $U$, and the distinguished triangle as the defining triangle of $U$.

We say $U$ is {\em admissible} if $U$ is $\bZ$-admissible.
\end{Def}

\begin{RMK}\begin{enumerate}[fullwidth]\label{kernel remarks}
    \item We define the $\bK$-admissibility of $U$ at the beginning. However the coefficient dependence seems redundant because our existence results in the following work for all $\bK$, especially for $\bK=\bZ$. Moreover, one can show that if $U$ is admissible, then $U$ is $\bK$-admissible for all $\bK$ (by taking the tensor product $\bK\dotimes_\bZ $ with kernels and then use the uniqueness below). From this point of view, we do not emphasize the coefficient ring $\bK$ for the kernels $(P_U,Q_U)$. We will see later that the $\bK$ does affect the computation of the Chiu-Tamarkin complex.
    \item The adjoint functors of the inclusion functor $\mathcal{D}_{T^*X\setminus U}(X) 
\hookrightarrow \mathcal{D}(X) $ is also studied in \cite{kuo2021wrapped}. The author constructs the left and right adjoint of the inclusion functor, which are called infinite wrapping functors. Same with our existence results below (e.g. \autoref{opensetsareadmissible}), the author also use the Guillermou-Kashiwara-Schapira sheaf quantization as a fundamental tool. Our point here is the existence of the kernel $P_U$.
\end{enumerate}\end{RMK}

In the following, we will present the functorial property, uniqueness of kernels, and existence of kernels for bounded open sets. Let us start with some basic facts.
\begin{Lemma}\label{functorial lemma}Suppose $U_1 \subset U_2 $ is an inclusion between $\bK$-admissible open subsets in $T^*X$ and their defining triangles are
    \[ P_{U_i}\xrightarrow{a_i} {\bK}_{\Delta_{X^2} \times[0,\infty)} \xrightarrow{b_i} Q_{U_i} \xrightarrow{+1}, \quad i=1,2.\]
    
\begin{enumerate}[fullwidth]\item We have $Q_{U_2}\star P_{U_1}\cong 0$, and the natural morphism
\[a_2\star P_{U_1} =[P_{U_2} \star P_{U_1}\rightarrow P_{U_1}],\]
is an isomorphism. In particular, we have $P_{U}\star P_{U}\cong P_{U}$ and $Q_{U}\star P_{U}\cong 0$ for any admissible open set $U$.
\item  For any admissible open set $U$ and for all $F,G \in D(X^2 \times \bR)$, we have the isomorphism:\[\HOM_{D(X^2\times \bR)}(F\star P_{U} ,G\star P_{U})\rightarrow  \HOM_{D(X^2\times \bR)}(F\star P_{U},G).\]
\item We have $\RHOM(P_{U_1},Q_{U_2})\cong 0$ and
\begin{equation}\label{non-equivariant CT and yoneda form}
  \RHOM(P_{U_1},a_2):\,\RHOM(P_{U_1},P_{U_2}) \cong\RHOM(P_{U_1},{\bK}_{\Delta_{X^2} \times[0,\infty)}) .\end{equation}
    \end{enumerate}
\end{Lemma}
\begin{proof}(1) follows from the definition. For (2), consider the functor $\widetilde{\mathcal{P}}(F)=F\star P_U : D(X^2\times\bR)\rightarrow D(X^2\times\bR)$, which is a projector on $D(X^2\times\bR)^{op}$ in the sense of \cite[Definition 4.1.1]{KS2006}. Notice that, the functor $\widetilde{\mathcal{P}}$ has the same formula as the microlocal projector but they have different domains. Then (2) follows from Proposition 4.1.3 in loc.cit.. For the vanishing of (3), we take $U=U_1$, $F={\bK}_{\Delta_{X^2} \times\{0\}}$, and $G=Q_{U_2}[d]$ for all $d\in \bZ$ in (2). Next, applying $\RHOM(P_{U_1},-)$ to the defining triangle of $U_2$, we have that $\RHOM(P_{U_1},a_2)$ is an isomorphism.\end{proof}
The functorial property of microlocal kernels is proven in \cite[Theorem 4.7(2)]{chiu2017} for the contact case, and the uniqueness appears in \cite[Section 4.6]{zhang2020quantitative} for the symplectic case. Here, we prove a strong form of the functorial property of kernels, which ensures that the defining triangle is also functorial and unique. 
\begin{Prop}\label{functorial}For any inclusion $U_1 \subset U_2 \subset T^*X$ between $\bK$-admissible open subsets and their defining triangles
    \[ P_{U_i}\xrightarrow{a_i} {\bK}_{\Delta_{X^2} \times[0,\infty)} \xrightarrow{b_i} Q_{U_i} \xrightarrow{+1},\quad i=1,2,\]we have a morphism between the defining triangles:
\begin{center}
\begin{tikzcd}
P_{U_1} \arrow[d, "a"] \arrow[r, "a_1"] & {{\bK}_{\Delta_{X^2}\times[0,\infty)}} \arrow[r, "b_1"] \arrow[d, "\id"] & Q_{U_1} \arrow[d, "b"] \arrow[r, "+1"] & {} \\
P_{U_2} \arrow[r, "a_2"]                & {{\bK}_{\Delta_{X^2}\times[0,\infty)}} \arrow[r, "b_2"]                  & Q_{U_2} \arrow[r, "+1"]                &  { .}
\end{tikzcd}
\end{center}
These morphisms $a,b$ are natural with respect to inclusions of admissible open sets. In particular, when $U_1=U_2$ (while $P_{U_1}$ and $P_{U_2}$ are a priori not the same), the morphism of the defining triangles is an isomorphism of distinguished triangles. \end{Prop}
\begin{proof}The construction of $a,b$ can be found in \cite[Theorem 4.7(2)]{chiu2017}. He verifies that $a_1=a_2a$ and $b_2=b b_1$ and $a,b$ are natural with respect to inclusions. However a priori, $a,b$ do not give a morphism of distinguished triangle. 

So, we consider the following morphism of distinguished triangles constructed by TR3:
\begin{center}
    \begin{tikzcd}
P_{U_1} \arrow[d, "a"] \arrow[r, "a_1"] & {{\bK}_{\Delta_{X^2}\times[0,\infty)}} \arrow[r, "b_1"] \arrow[d, "\id"] & Q_{U_1} \arrow[d, "\psi"] \arrow[r, "+1"] & {} \\
P_{U_2} \arrow[r, "a_2"]                & {{\bK}_{\Delta_{X^2}\times[0,\infty)}} \arrow[r, "b_2"]                  & Q_{U_2} \arrow[r, "+1"]                   & {.}
\end{tikzcd}
\end{center}

Then we have $\psi b_1=b_2$. As a corollary of \autoref{functorial lemma}-(3), we have the isomorphism:
\[\HOM(Q_{U_1},Q_{U_2} )\xrightarrow{-\circ b_1} \HOM({\bK}_{\Delta_{X^2} \times[0,\infty)},Q_{U_2}).\]
Finally, one conclude that $b=\psi$ since their image under the isomorphism $-\circ b_1$ is $b_2$.
\end{proof}
By the results of \cite{GKS2012} recalled in \autoref{GKSsection}, there exists $\mathcal{K}\in D(X^2\times
  \bR_t)$ (taking $\mathcal{K}=\mathcal{K}(\widehat{\varphi})_z$) such that the convolution functor \[D(X\times \bR_t) \to D(X\times \bR_t),\quad F \mapsto
   F\star \cK,\]is an equivalence of categories and $\mu s_L(F \star \cK) =\varphi_z(\mu
  s_L(F))$.  Since $\widehat{\varphi}_z$ preserves $\tau$ of $T^*(X\times
  \bR_t)$, this functor descends to the quotient $\cD(X)$ and gives an auto-equivalence. 
\begin{Coro}{{\cite[Proposition 4.5]{chiu2017}}}\label{invariance of kernel}Let $U\subset T^*X$ be an admissible open set, and $\varphi$ be a compactly supported Hamiltonian isotopy $\varphi:I\times T^*X  \rightarrow T^*X $. Then $\varphi_z(U)$ is admissible for all $z\in I$ and we have an isomorphism $P_{\varphi_z(U)} \cong \cK^{-1} \star P_U \star \cK$.

In particular, for $U=T^*X$, the isomorphism is realized by: \[ \bK_{\Delta_{X^2}\times [0,\infty)}\cong \cK^{-1}\star \cK\star \bK_{\Delta_{X^2}\times [0,\infty)} \cong \cK^{-1}\star\bK_{\Delta_{X^2}\times [0,\infty)}\star \cK .\]
\end{Coro}

On the other hand, we can also consider the rescaling of the size of $U$ (or the rescaling of the symplectic form). To be simpler, assume that $X=\bR^d$ is a $\bR$-vector space. Consider the map $R:X\times \bR \rightarrow X\times \bR$, $(\bq,t)\mapsto (\bq/r,t/r^2)$ for $r>0$. Then \autoref{functional estimate}-(2) shows that $SS(R_!F)=\{(r\bq,\bp/r,r^2t,\tau/r^2): (\bq,\bp,t,\tau)\in SS(F)\}$. Therefore, if $\mu s(F)\subset T^*X\setminus U$, we have $\mu s(R_!F)\subset T^*X\setminus rU$ since $\rho(r\bq,\bp/r,r^2t,\tau/r^2)=(r\bq, (\bp/r)/(\tau/r^2))=(r\bq,r(\bp/\tau))$. We can directly verify that $R_!:D(X^2\times
  \bR_t) \rightarrow D(X^2\times
  \bR_t)$ induces an equivalence $R_!:\cD_{U}(X)\xrightarrow{{\cong}}\cD_{rU}(X)$ whose quasi inverse is $R^{-1}_!$. Then we have an isomorphism of functors $\star P_{rU}\cong R^{-1}_!  (-\star P_U)R_!$. Moreover, we have
  \begin{Coro}\label{conformal invariance of kernel}Let $U\subset T^*\bR^d$ be an admissible open set, and $r>0$. Then $rU$ is admissible and we have an isomorphism $P_{rU} \cong  \bar{R}_!P_U$ where $\bar{R}(\bq,\bq',t)=(\bq/r,\bq'/r,t/r^2)$.      \end{Coro}
\begin{proof}Obviously, we have a distinguished triangle
\[ \bar{R}_!P_U\rightarrow {\bK}_{\Delta_{X^2} \times[0,\infty)} \rightarrow \bar{R}_!Q_U \xrightarrow{+1}.\]
Let us show that it is a defining triangle of $rU$. If so, the result follows from the uniqueness.

Take $\mathcal{R}=\bK_{\Gamma_R}$ and $\mathcal{R}^{-1}=\bK_{\Gamma_{R^{-1}}}$ where $\Gamma_g$ denotes the graph of $g$. Then we have an isomorphism of functors $\circ \mathcal{R}  \cong R_!$. Be careful that the convolution kernels $\mathcal{R},\mathcal{R}^{-1}$ are objects in $D((X\times \bR)^2)$, and they cannot descent to $\cD(X^2)$. So, we are necessary to consider compositions rather than convolutions. Based on \autoref{remark convolution and composition comparision}, we take $\mathscr{P}_U =m^{-1}P_U$ and $\mathscr{Q}_U =m^{-1}Q_U$ to guarantee that we have isomorphisms of functors: $\circ \mathscr{P}_U\cong \star P_U$ and $\circ \mathscr{Q}_U\cong \star Q_U$. In the following, we only discuss $\mathscr{P}_U$, but all the rest are true for $\mathscr{Q}_U$.

The previous discussion shows that $\circ \mathscr{P}_{rU}\cong R^{-1}_!  (-\circ \mathscr{P}_U) R_! \cong -\circ (\mathcal{R}^{-1}   \circ \mathscr{P}_U\circ \mathcal{R})   $ as functors. Noticed that $\circ$ here only represent composition of sheaves. On the other hand, we have isomorphisms of composition kernels:
$\mathcal{R}^{-1}   \circ \mathscr{P}_U\circ \mathcal{R} \cong (R\times R)_!\mathscr{P}_U$. It follows from the general fact that $A\circ \bK_{\Gamma_g}\cong (\id\times g)_!A$ and $\bK_{\Gamma_{g^{-1}}}\circ A\cong (g\times \id)_!A$ for any $A$ and $g$. 

Then we conclude by the base change isomorphism $(R\times R)_!m^{-1}\cong m^{-1}\bar{R}_!$.
\end{proof}
Now, let us study the existence of admissible open sets $U\subset T^*X$. In general, we can take a smooth Hamiltonian function $H$ such that $U=\{H<1\}$ (\cite[Theorem 2.29]{JohnLee}). Our tools to construct kernels are sheaf quantizations and the Fourier-Sato-Tamarkin transform. 

For our later application for toric domains, let us state our idea in a more general form. Suppose there is a Hamiltonian $\bR^m_{z}$-action  on $T^*X$, i.e., a symplectic action $\varphi:\bR^m_{z}\times T^*X\rightarrow T^*X$ with a moment map $\mu: T^*X\rightarrow (\bR^m_{z} )^*=\bR^m_{\zeta}$. Let us consider $U$ of the form $\mu^{-1}(\Omega)$, where $\Omega\subset \bR^m_{\zeta}$.
We assume that there exists a {\em sheaf quantization} $\cK\in \cD(\bR_z^m\times X^2)$ associated with the Hamiltonian action in the sense:
\begin{align}\label{sheafquantization}
\begin{aligned}
 &\mathcal{K}|_{z=0} \cong  {\bK}_{\Delta_{X^2 }\times [0,\infty)},\\
&\mu s(\mathcal{K}) \subset \{ (z,- \mu(\bq,\bp), \bq,-\bp,\varphi_z(\bq,\bp) ):(z,\bq,\bp)\in  \bR^m_{z}\times T^*X\}.   
\end{aligned}
\end{align}
\begin{RMK}
One can see that when $m=1$, it is exactly the single Hamiltonian situation $\mu=H$. Now, if we additionally assume that $\mu=H$ is compactly supported up to constant, then we can take $\mathcal{K}=\mathcal{K}(\widehat{\varphi^H})\star \bK_{[0,\infty)}$ as the sheaf quantization. Here, $\mathcal{K}(\widehat{\varphi^H})$ is introduced in \autoref{GKSsection}.

In fact, as \cite[Remark 3.9]{GKS2012} discussed, if we have an Hamiltonian action of $\bR^m$ with $m>1$, the sheaf quantization exists if the action is compactly supported.
\end{RMK}
The sheaf $\mathcal{K}$ lives over the $z$-variable space. Intuitively, if we want to restrict the microsupport of some sheaves into $ \Omega \subset \bR^m_\zeta$, we need a sheaf transform to interchange $z$ and $\zeta$ variables, which are dual to each other. Then we cut-off the support of the resulting sheaf in some way. This operation is classical in mechanics and thermodynamics, i.e. the Legendre transform. We have noticed that the sheaf correspondence of the Legendre transform is the Fourier-Sato(-Tamarkin) transform. Consequently, let us apply the Fourier-Sato-Tamarkin transform to the $z$-variable, i.e. $\widehat{\mathcal{K}}=\mathcal{K}\star \bK_{Leg(\bR^m_{\zeta})}[m]\in \mathcal{D}(\bR^m_{\zeta} \times X^2)$. So by \eqref{sheafquantization} and \eqref{microsupportfouriertransform}, we have 
\begin{equation}\label{fourier transform of sheaf quantization}
\mu s(\widehat{\mathcal{K}})\subset \{ (\mu(\bq,\bp),z, \bq, -\bp,\varphi_z(\bq, \bp)):(z,\bq,\bp)\in  \bR^m_{z}\times T^*X\}.    
\end{equation}
Then, we construct the kernels in the following way. Consider the excision triangle in $D(\bR^m_{\zeta})$:\[{\bK}_{\Omega} \rightarrow {\bK}_{\bR^m_{{\zeta}}} \rightarrow {\bK}_{\bR^m_{{\zeta}}\setminus\Omega}\xrightarrow{+1}.\]
Composing the distinguished
triangle with $\widehat{\mathcal{K}}$, we obtain a distinguished triangle in $\mathcal{D}(X^2)$: 
\[\widehat{\mathcal{K}}\circ {\bK}_{\Omega} \rightarrow \widehat{\mathcal{K}}\circ {\bK}_{\bR^m_{{\zeta}}} \rightarrow \widehat{\mathcal{K}}\circ {\bK}_{\bR^m_{{\zeta}}\setminus\Omega}\xrightarrow{+1} .\]
By the associativity of convolutions and compositions (see \eqref{monoidal identities}), we have $\widehat{\mathcal{K}}\circ F = (\mathcal{K} \star \bK_{Leg(\bR^m_{\zeta})}[m])\circ F\cong\mathcal{K} \star (\bK_{Leg(\bR^m_{\zeta})}[m] \circ F) \cong\mathcal{K} \star \widetriangle{F}$ for $F\in D(\bR^m_{\zeta})$, where $\widetriangle{F}=\bK_{Leg(\bR^m_{\zeta})}[m] \circ F$ in \eqref{already set}.

So as $\bK_{Leg(\bR^m_{\zeta})}[m] \circ \bK_{\bR^m_\zeta}=\bK_{\lbrace z=0,\,t \geq 0\rbrace}$, we have 
\[(\widehat{\mathcal{K}}\circ \bK_{\bR^m_\zeta}) \cong \mathcal{K} \star \bK_{\lbrace z=0,\,t \geq 0\rbrace}\cong \bK_{\Delta_{X^2} \times [0,\infty)},\]
where the last isomorphism comes from \eqref{sheafquantization}, i.e., $\mathcal{K}|_{z=0} \cong {\bK}_{\Delta_{X^2}\times [0,\infty)}$. Therefore, we have the distinguished triangle
\[\widehat{\mathcal{K}}\circ {\bK}_{\Omega} \rightarrow \bK_{\Delta_{X^2} \times [0,\infty)}\rightarrow \widehat{\mathcal{K}}\circ {\bK}_{\bR^m_{{\zeta}}\setminus\Omega}\xrightarrow{+1} .\]
\begin{Prop}\label{Existence}Let $\varphi$ be a  Hamiltonian $\bR^m_{{z}}$-action on $T^*X$ with a moment map $\mu: T^*X\rightarrow \bR^m_{{\zeta}} $. We assume that there exists a sheaf quantization $\mathcal{K}\in D(\bR^m_{{z}} \times X^2\times \bR_t)$ of the Hamiltonian action in the sense of \eqref{sheafquantization}. For an open subset $\Omega \subset  \bR^m_{ {\zeta}}$ such that for all $\zeta\in \Omega$, $\mu^{-1}(\zeta)$ is compact, the open set $U=\mu^{-1}(\Omega)\subset T^*X$ is admissible.

More precisely, the pair of sheaves 
\begin{equation}\label{standard model of kernels}
  P_U\coloneqq \widehat{\mathcal{K}}\circ {\bK}_{\Omega},\,Q_U\coloneqq \widehat{\mathcal{K}}\circ {\bK}_{\bR^m_{ {\zeta}}\setminus\Omega},  
\end{equation}
 and the distinguished triangle
 \begin{equation}\label{dt of admissible open set}
    \widehat{\mathcal{K}}\circ {\bK}_{\Omega} \rightarrow {\bK}_{\Delta_{X^2}\times [0,\infty)} \rightarrow \widehat{\mathcal{K}}\circ {\bK}_{\bR^m_{ {\zeta}}\setminus\Omega}\xrightarrow{+1}, 
 \end{equation}

provide the microlocal kernels of $U$ and the semi-orthogonal decomposition.
\end{Prop}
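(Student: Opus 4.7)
The plan is to take the distinguished triangle \eqref{dt of admissible open set} as the starting point. Convolving with an arbitrary $F \in \mathcal{D}(X)$ produces a functorial triangle
\[P_U \star F \to F \to Q_U \star F \xrightarrow{+1}.\]
By \autoref{semiorthcomplement}(3), together with the definition $\mathcal{D}_U(X) = {}^\perp \mathcal{D}_{T^*X\setminus U}(X)$ (which makes orthogonality between the two subcategories automatic), the proof reduces to verifying, for every $F \in \mathcal{D}(X)$: \textbf{(i)} $\mu s(Q_U \star F) \subset T^*X\setminus U$; and \textbf{(ii)} $\HOM(P_U\star F, G) = 0$ for every $G \in \mathcal{D}_{T^*X\setminus U}(X)$.

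The heart of the argument is a single microsupport estimate for $\widehat{\mathcal{K}} \circ \bK_E$ with $E \subset \bR^m_\zeta$ locally closed. Starting from \eqref{sheafquantization} and \eqref{microsupportfouriertransform}, the composition and pushforward estimates of \autoref{functional estimate} combined with the compactness of $\mu^{-1}(\zeta)$ yield
\[\mu s(\widehat{\mathcal{K}} \circ \bK_E) \subset \bigl\{(q,-p,q',-p') : (q,p) \in \mu^{-1}(\overline{E}),\ (q',p') = \varphi_z(q,p)\text{ for some } z \in \bR^m\bigr\}.\]
The compactness of $\mu^{-1}(\zeta)$ validates the proper-pushforward step along the non-proper projection eliminating $\zeta$, by ensuring that the effective support of $\widehat{\mathcal{K}}$ above any bounded set of $\zeta$-values is proper over $X^2$. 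The crucial geometric input is that the Hamiltonian $\bR^m$-action preserves the moment map (the components $\mu_i$ Poisson-commute because the flows $\varphi_z$ commute), so orbits stay in level sets of $\mu$ and the target $(q', p') = \varphi_z(q,p)$ inherits the constraint $\mu(q', p') \in \overline E$.

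Taking $E = \bR^m \setminus \Omega$ and convolving with $F$ using \autoref{functional estimate} once more gives $\mu s(Q_U \star F) \subset T^*X \setminus U$, which is (i). For (ii), the same estimate with $E = \Omega$ gives $\mu s(P_U \star H) \subset U$ for any $H$. Specializing to $H = G \in \mathcal{D}_{T^*X\setminus U}(X)$ and applying the triangulated inequality to $P_U \star G \to G \to Q_U \star G$ forces $\mu s(P_U \star G) \subset U \cap (T^*X \setminus U) = \emptyset$, hence $P_U \star G = 0$: indeed, a sheaf in $\mathcal{D}(X)$ whose sectional microsupport is empty has $SS \subset \{\tau \leq 0\}$, and being simultaneously in $\mathcal{D}(X) = {}^\perp\{SS \subset \{\tau \leq 0\}\}$ it must vanish. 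Applying $P_U\star -$ and $Q_U\star -$ to the triangle for $F$ then yields the idempotencies $P_U \star P_U \cong P_U$, $Q_U \star Q_U \cong Q_U$ and annihilator relations $P_U \star Q_U \cong Q_U \star P_U \cong 0$; the $\HOM$-vanishing in (ii) follows via the convolution adjunction.

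The principal obstacle throughout is the microsupport estimate for $\widehat{\mathcal{K}} \circ \bK_E$: the non-properness of the $\zeta$-projection must be handled by exploiting the compactness of $\mu^{-1}(\zeta)$ alongside a careful tracking of the Lagrangian correspondence encoded in $\mu s(\widehat{\mathcal{K}})$. A secondary subtle point is the passage from $P_U \star G = 0$ to the full $\HOM$-vanishing in (ii); in case the convolution adjoint in the Tamarkin category is not directly at hand, one can instead deduce the semi-orthogonal decomposition from the idempotency and annihilator relations above by a general categorical argument for a pair of complementary idempotent kernels.
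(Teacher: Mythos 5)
Your outline diverges from the paper, which does not argue from scratch: it observes that the statement is a direct generalization of Chiu's Theorem 3.11 (Hamiltonian $\bR^m$-action instead of a single Hamiltonian, $\Omega$ instead of $(-\infty,R)$) and only checks that the compactness hypothesis on $\mu^{-1}(\zeta)$ replaces the properness that is automatic in Chiu's setting. Attempting a self-contained proof is legitimate, but your argument has a genuine gap at its central step. The microsupport estimate you state (correctly) only gives $\mu s(\widehat{\mathcal{K}}\circ\bK_E)$ inside the locus where $\mu\in\overline{E}$, because $SS(\bK_E)$ lives over $\overline{E}=\supp(\bK_E)$ and, for $E=\Omega$ open, carries nonzero conormal covectors over $\partial\Omega$ that survive the composition with $\widehat{\mathcal{K}}$. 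So the estimate yields $\mu s(P_U\star H)\subset \mu^{-1}(\overline{\Omega})$, \emph{not} $\subset U=\mu^{-1}(\Omega)$. Consequently, for $G\in\mathcal{D}_{T^*X\setminus U}(X)$ the triangulated inequality only traps $\mu s(P_U\star G)$ inside $\mu^{-1}(\overline{\Omega})\cap\mu^{-1}(\bR^m\setminus\Omega)=\mu^{-1}(\partial\Omega)$, which is not empty, and the conclusion $P_U\star G=0$ does not follow. This is not a cosmetic issue: if one could show $\mu s(P_U\star F)\subset U$ for all $F$, the category $\mathcal{D}_U(X)$ would coincide with the naive ``microsupported in $U$'' category, which is precisely what fails and why $\mathcal{D}_U(X)$ is defined as a semi-orthogonal complement. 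The vanishing $P_U\star G\cong 0$ (equivalently $G\xrightarrow{\sim}Q_U\star G$) is the hard half of Chiu's theorem and requires a finer analysis of the cut-off in the $(z,t)$-variables (the polar-cone convolution $\star\,\bK_{\Omega^\circ}$ applied to $\mathcal{K}\star G$, whose microsupport over $\bR^m_z$ is controlled by $\mu s(G)$ avoiding $\Omega$), not a crude intersection of sectional microsupports. Part (i), for $E=\bR^m\setminus\Omega$, is fine precisely because that set is already closed.

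A secondary, dependent weakness: your fallback ``categorical argument for complementary idempotent kernels'' needs the kernel-level relations $P_U\star Q_U\cong Q_U\star P_U\cong 0$ as input, but you derive those from the broken vanishing above, so part (ii) is not rescued; and ``the convolution adjunction'' relates $\HOM(P_U\star F,G)$ to $\HOM(F,\RHHOM^\star(P_U,G))$, whose vanishing is again a separate statement from $P_U\star G=0$. To repair the proof you should either establish $P_U\star Q_U\cong Q_U\star P_U\cong 0$ directly at the kernel level (via $\mathcal{K}_z\star\mathcal{K}_{z'}\cong\mathcal{K}_{z+z'}$ and $\bK_\Omega\otimes\bK_{\bR^m\setminus\Omega}=0$ under the Fourier--Sato--Tamarkin transform) and then run the categorical argument, or simply follow the paper and reduce to Chiu's proof, checking only where the compactness of $\mu^{-1}(\zeta)$ enters.
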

\begin{proof}
Our construction is a straightforward generalization of Chiu's result \cite[Theorem 3.11]{chiu2017}. One only needs to notice that we consider a Hamiltonian $\bR^m_z$-action more than a single Hamiltonian function, and we replace $(-\infty,R)$ in Chiu's paper by $\Omega$. The properness condition of $\Omega$ is a technical condition that is automatically true in the situation of Chiu. One can check the proof of Chiu to confirm that our condition is enough to ensure the virtue of the semi-orthogonal decomposition without any other modification. Furthermore, Chiu's argument works for $\bZ$. So we obtain not only $\bK$-admissibility but also admissibility.
\end{proof}

\begin{Def}\label{dynamicallyopensets}We say an admissible open set $U$ is \emph{dynamically admissible} if there exists $\varphi$, $\cK$, $\Omega$ that satisfies \autoref{Existence}.
\end{Def}
So, for dynamically admissible sets, \autoref{Existence} provides us with a standard way to construct microlocal kernels. Now, let us state some corollaries. 

\begin{Coro}\label{opensetsareadmissible}Bounded open sets are dynamically admissible.
\end{Coro}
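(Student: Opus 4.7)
The plan is to realize a bounded open set $U\subset T^*X$ as $\mu^{-1}(\Omega)$ for a one-parameter Hamiltonian action whose moment-map fibers over $\Omega$ are compact, and then to apply \autoref{Existence}.

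The first step is to construct a compactly supported smooth function $H\colon T^*X\to\bR$ with $H\geq 0$ and $\{H>0\}=U$. Since $T^*X$ is second countable and $U$ is open, $U$ can be written as a countable union of open balls $B_i$ with $\overline{B_i}\subset U$; I would choose smooth bump functions $\phi_i\geq 0$ supported in $\overline{B_i}$ and strictly positive on $B_i$, and form a weighted sum $H=\sum_i c_i\phi_i$ with weights $c_i>0$ decaying fast enough that the series converges in $C^\infty$. The resulting $H$ is non-negative, strictly positive exactly on $U$, and supported in $\overline U$, which is compact by boundedness of $U$.

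Next, I set $m=1$ and let $\mu=H$ serve as the moment map of the Hamiltonian $\bR_z$-action generated by $H$. Taking $\Omega=(0,\infty)\subset\bR_\zeta$, one has $\mu^{-1}(\Omega)=\{H>0\}=U$ by construction, and for every $\zeta\in\Omega$ the fiber $\mu^{-1}(\zeta)$ is closed in $T^*X$ and contained in $U\subset\overline U$, hence compact. Because $H$ is compactly supported, \autoref{cone map} combined with \autoref{GKS} produces a sheaf quantization $\mathcal{K}\in D(\bR_z\times X^2\times\bR_t)$ that satisfies the two conditions in \eqref{sheafquantization} with $\mu=H$. All hypotheses of \autoref{Existence} are therefore in force; it yields the kernels $P_U=\widehat{\mathcal{K}}\circ\bK_{(0,\infty)}$ and $Q_U=\widehat{\mathcal{K}}\circ\bK_{(-\infty,0]}$, the distinguished triangle \eqref{dt of admissible open set}, and admissibility of $U$ over every ground field $\bK$. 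The only delicate ingredient is arranging $\{H>0\}$ to equal $U$ exactly rather than be merely contained in it, and this is a standard paracompactness argument rather than a genuine obstacle.
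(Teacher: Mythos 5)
Your proposal is correct and takes essentially the same route as the paper: the paper also produces a smooth function with compactly supported differential realizing $U$ exactly as the preimage of an open interval (there $U=\{H<1\}$ via smooth Urysohn, with $\Omega=(-\infty,1)$; you take $U=\{H>0\}$ with $\Omega=(0,\infty)$), checks compactness of the fibers over $\Omega$ using boundedness of $U$, and then invokes \autoref{Existence} via the GKS quantization. The two choices of cutoff interval are mirror images of one another and make no difference to the argument.
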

\begin{proof}Let $U\subset T^*X$ be a bounded open set, we have $T^*X \setminus U$ is a closed subset of $T^*X$. Then there exists a smooth function $H: T^*X
\rightarrow [0,1]$ such that $U=\{H<1 \}$ and $T^*X \setminus U =\{H\geq 1\}$. Actually, we take a non negative function $f$ such that $f^{-1}(0)=T^*X\setminus U$, see \cite[Theorem 2.29]{JohnLee}. Then we take $H(x)=1-f(x)$.

Since $U$ is bounded, the subsets $\{H=a\}\subset U$ with $a<1$ are compact. Moreover, $dH$ has compact support. So we can take the GKS quantization $\mathcal{K}(\widehat{\varphi^H})$. Then the result follows from \autoref{Existence} by taking $\Omega=(-\infty,1)$.
\end{proof}

The second corollary here concerns the kernel of products of open sets.

\begin{Coro}\label{product}Suppose we have two bounded open sets $U_i \subset T^*X_i$, with two pairs of kernels
$ (P_{U_i}, Q_{U_i}) ,\, i=1,2.$ Then $U_1\times U_2$ is dynamically admissible and $P_{U_1\times U_2}\cong  P_{U_1}\boxstar P_{U_2}$.
\end{Coro}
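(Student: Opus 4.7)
The idea is to realize $U_1\times U_2$ as the preimage of an open set under a joint moment map and then apply \autoref{Existence} directly. Using the argument of \autoref{opensetsareadmissible}, choose Hamiltonians $H_i:T^*X_i\to [0,1]$ with $U_i=\{H_i<1\}$. Regarded as functions on $T^*(X_1\times X_2)=T^*X_1\times T^*X_2$, they depend on disjoint sets of variables so their Hamiltonian flows commute and assemble into a Hamiltonian $\bR^2_{\underline{z}}$-action with moment map $\mu=(H_1,H_2)$. Then $U_1\times U_2=\mu^{-1}(\Omega)$ for the open set $\Omega=(-\infty,1)\times(-\infty,1)\subset\bR^2_{\underline{\zeta}}$, and the fibres $\mu^{-1}(\underline{\zeta})$ for $\underline{\zeta}\in\Omega$ are compact because each $U_i$ is bounded.

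To invoke \autoref{Existence} I must exhibit a sheaf quantization $\mathcal{K}\in D(\bR^2_{\underline{z}}\times(X_1\times X_2)^2\times\bR_t)$ of this action in the sense of \eqref{sheafquantization}. Writing $\mathcal{K}_i\in D(\bR_{z_i}\times X_i^2\times\bR_t)$ for the individual GKS quantizations of \autoref{GKSsection}, the natural candidate is
\[\mathcal{K}:={\textnormal{R}}s_{t!}^2(\mathcal{K}_1\boxtimes\mathcal{K}_2).\]
The initial condition at $\underline{z}=0$ is immediate from the initial conditions on each $\mathcal{K}_i$ together with ${\textnormal{R}}s_{t!}^2(\bK_{\{0\}}\boxtimes\bK_{\{0\}})=\bK_{\{0\}}$. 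The microsupport bound in \eqref{sheafquantization} follows from combining the external-product estimate \autoref{functional estimate}(1) with the proper pushforward estimate \autoref{functional estimate}(2) applied to $s_t^2$ (properness on $\supp(\mathcal{K}_1\boxtimes\mathcal{K}_2)$ coming from \autoref{GKS}(a) for each factor), using that the symplectic action of the joint flow is $S_{H_1}+S_{H_2}$. Thus \autoref{Existence} yields the admissibility of $U_1\times U_2$ together with $P_{U_1\times U_2}\cong\widehat{\mathcal{K}}\circ\bK_\Omega$.

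Finally I compare this with the claimed formula. Since the Fourier-Sato-Tamarkin transform on $\bR^2_{\underline{z}}$ is itself a convolution with $\bK_{FT}$, which factors as a $t$-convolution of the two one-variable kernels, it commutes with the external product in the base variables and with ${\textnormal{R}}s_{t!}^2$; this yields $\widehat{\mathcal{K}}\cong{\textnormal{R}}s_{t!}^2(\widehat{\mathcal{K}_1}\boxtimes\widehat{\mathcal{K}_2})$. Writing $\bK_\Omega=\bK_{(-\infty,1)}\boxtimes\bK_{(-\infty,1)}$, the composition $\circ$ integrates over the two $\zeta_i$-variables independently and so factors through the external product, giving
\[P_{U_1\times U_2}\cong{\textnormal{R}}s_{t!}^2\bigl((\widehat{\mathcal{K}_1}\circ\bK_{(-\infty,1)})\boxtimes(\widehat{\mathcal{K}_2}\circ\bK_{(-\infty,1)})\bigr)={\textnormal{R}}s_{t!}^2(P_{U_1}\boxtimes P_{U_2}),\]
where the last identification uses the uniqueness built into \autoref{functorial} (applied with $U_1=U_2$) to identify each inner factor with the given $P_{U_i}$. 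The main technical obstacle I foresee is the clean book-keeping of the three interlocking convolutions ($\star$, $\circ$, and ${\textnormal{R}}s_{t!}^2$) needed to extract the Fourier-Sato-Tamarkin commutation; the projection-formula and base-change tools behind \autoref{functional estimate} make this routine but it must be spelled out carefully.
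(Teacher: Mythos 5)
Your proposal is correct and follows essentially the same route as the paper: realize $U_1\times U_2$ as $\mu^{-1}((-\infty,1)^2)$ for the product action, take $\mathcal{K}={\textnormal{R}}s^2_{t!}(\mathcal{K}_1\boxtimes\mathcal{K}_2)$, invoke \autoref{Existence}, and then untangle the convolutions. The only (cosmetic) difference is in the last step, where you commute the Fourier--Sato--Tamarkin transform with the external product before composing with $\bK_\Omega$, whereas the paper first rewrites $\widehat{\mathcal{K}}\circ\bK_\Omega$ as $\mathcal{K}\star\bK_{\Omega^\circ}$ via the polar cone of the convex set $\Omega$ and then applies the same $\star$/$\boxtimes$ interchange identity.
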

\begin{proof}By the assumption, we have two Hamiltonian functions $H_i \in C^\infty(T^*X_i)$ such that $U_i=\{H_i<1\}$ and we associate with them two sheaf quantizations $\mathcal{K}_i$. Then  \[ (P_{U_i}, Q_{U_i})= (\widehat{\mathcal{K}}_i\circ {\bK}_{(-\infty,1)},\widehat{\mathcal{K}}_i\circ {\bK}_{[1,\infty)} ),\quad i=1,2.\]
Now, consider the product Hamiltonian  $\bR_{{z}}^2$-action on $T^*(X_1\times X_2)$ whose moment map is $\mu=(H_1, H_2)$. Then $\mathcal{K}_1\boxstar \mathcal{K}_2$ is a sheaf quantization of the Hamiltonian action in the sense of \eqref{sheafquantization}.

Observe that if we take $\Omega=\{\zeta=(\zeta_1,\zeta_2):\zeta_1<1, \zeta_2<1 \}$, then we have $U_1\times U_2=\mu^{-1}(\Omega)$. Consequently, \autoref{Existence} implies that $U_1\times U_2$ is admissible by the following distinguished triangle
\[\widehat{\mathcal{K}}\circ {\bK}_{\Omega} \rightarrow {\bK}_{\Delta\times \{t\geq 0\}} \rightarrow \widehat{\mathcal{K}}\circ {\bK}_{\bR^2_{{\zeta}}\setminus\Omega}\xrightarrow{+1}.\]
Subsequently, let us compute $\widehat{\mathcal{K}}\circ {\bK}_{\Omega}$.

Recall \[\widehat{\mathcal{K}}\circ {\bK}_{\Omega}\cong  \mathcal{K}\star \widetriangle{ {\bK}_{\Omega}}.
\]
Notice $\Omega$ is an open convex set.
Therefore, $\widetriangle{ {\bK}_{\Omega}}={\bK}_{\{({z}, {\zeta},t): t+{z} \cdot {\zeta}\geq 0\}}[2] \circ {\bK}_{\Omega}$ is the constant sheaf ${\bK}_{\Omega^\circ}$ supported on the polar cone $\Omega^\circ$ of $\Omega$, where 
\[\Omega^\circ =\{({z},t): t+{z} \cdot {\zeta}\geq 0, \forall \zeta \in \Omega \}.\]
In fact, ${\bK}_{\{({z}, {\zeta},t): t+{z} \cdot {\zeta}\geq 0\}}[2] \circ {\bK}_{\Omega}$ is isomorphic to the classical Fourier-Sato transform of the constant sheaf supported on the conification of $\Omega$ (upto a degree shifting depends only on dimension). The conification is a convex cone. The Fourier-Sato transform of a constant sheaf supported on a convex cone is the constant sheaf supported on the polar cone of the given cone. A direct computation shows that the polar cone of the conification of $\Omega$ is exactly $\Omega^\circ$. Then, our computation follows.

In particular, when $\Omega=\{\zeta_1<1, \zeta_2<1 \}$, we have ${\Omega}^\circ=\{({z},t): z=(z_1,z_2),,z_1\leq 0, z_2\leq 0, t\geq -(z_1+z_2)\geq 0\}$. Moreover, ${\bK}_{{\Omega}^\circ} \cong {\textnormal{R}}s_{t!}^2( \bK_{\gamma_1 \times \gamma_2})$, where $\gamma_i = \{(z_i,t) :t \geq -z_i\geq 0\}$.

Now we have 
\begin{align*}
 \widehat{\mathcal{K}}\circ {\bK}_{\Omega}&\cong  \mathcal{K}\star \widetriangle{ {\bK}_{\Omega}} \cong \mathcal{K} \star \bK_{\Omega^\circ}\cong \mathcal{K} \star {\textnormal{R}}s_{t!}^2( \bK_{\gamma_1 \times \gamma_2})\\
 &\cong  (\mathcal{K}_1\boxstar \mathcal{K}_2) \star {\textnormal{R}}s_{t!}^2( \bK_{\gamma_1 \times \gamma_2})\cong  (\mathcal{K}_1 \star {\bK}_{\gamma_1}) \boxstar (\mathcal{K}_2 \star {\bK}_{\gamma_2}).
 \end{align*}
Finally, noticing that $ {\bK}_{\{(z,t) :t \geq -z\geq 0\}}\cong {\bK}_{\{(z, \zeta,t): t+z\zeta\geq 0\}}[1] \circ {\bK}_{(-\infty,1)} $, one can conclude that 
\[ P_{U_1\times U_2}\cong \widehat{\mathcal{K}}\circ {\bK}_{\Omega}  \cong (\mathcal{K}_1 \star {\bK}_{\gamma_1}) \boxstar (\mathcal{K}_2 \star {\bK}_{\gamma_2}) \cong P_{U_1}\boxstar P_{U_2}.\]\end{proof}

\subsection{Chiu-Tamarkin complex}
Let $\bZ/\ell$ be the finite cyclic group of order $\ell\in \bN$ and $X$ be a smooth manifold of dimension $d$. 

Now take an admissible open set $U\subset T^*X$, and let $P_U$ be the kernel associated with $U$. 
The manifold $(X^2\times \bR_t)^{\ell}$ admits a $\bZ/\ell$-action induced by the cyclic permutation of the $\ell$ factors. According to \cite[Section 2.2]{lonergan_2021}, the object $P_U^{\dboxtimes \ell}$ of $D((X^2\times \bR_t)^\ell)$ has a natural lift $St_D(P_U)$ as an object of the equivariant derived category $D_{\bZ/\ell}((X^2\times \bR_t)^\ell)$, which we also denote, due to historically reason, by $P_U^{\dboxtimes \ell}$. Then we have $P_U^{\boxstar \ell}=\tnR s_{t!}^\ell P_U^{\dboxtimes \ell} \in D_{\bZ/\ell}((X^2)^\ell \times \bR_t)$.

Consider the $\bZ/\ell$-equivariant maps
\begin{align*}
   \pi_{\underline{\bq}}&: X^\ell \times \bR\rightarrow \bR,\\
    \Tilde{\Delta}_X&: X^\ell \times \bR\rightarrow X^{2\ell} \times \bR,\\
    \Tilde{\Delta}_X(\bq_1,\dots,\bq_\ell,t)&=(\bq_\ell,\bq_1,\bq_1,\dots,\bq_{\ell-1},\bq_{\ell-1},\bq_\ell,t),\\
    i_T&: \{T\}\hookrightarrow \bR,\quad
\end{align*}
where $\underline{\bq}=(\bq_1,\dots,\bq_\ell)$ and $\Tilde{\Delta}_X$ is a twisted diagonal map of $X$.

There is an adjoint pair $(\alpha_{\ell,T,X},\beta_{\ell,T,X})$:
\begin{center}
    \begin{tikzcd}
F\in D_{\bZ/\ell}((X^2\times \bR_t)^\ell) \arrow[rr, "\alpha_{\ell,T,X}", shift left] &  & D_{\bZ/\ell}(\pt) \ni G, \arrow[ll, "\beta_{\ell,T,X}"]
\end{tikzcd}
\end{center}
defined by:
\begin{align}\label{definition of adjoint loop functor}
\begin{aligned}
    &\alpha_{\ell,T,X}(F)=i_T^{-1}{\textnormal{R}}\pi_{\underline{\bq}!}   \Tilde{\Delta}_X^{-1}{\textnormal{R}}s_{t!}^\ell\left(F\right),\\
    &\beta_{\ell,T,X}(G)=s_t^{\ell!}\Tilde{\Delta}_{X*} \pi_{\underline{\bq}}^!i_{T*} G.
\end{aligned}
\end{align}
Now, we define a functor 
\begin{equation}\label{definition of F}
    F_{\ell,X}={\textnormal{R}}\pi_{\underline{\bq}!}   \Tilde{\Delta}_X^{-1}{\textnormal{R}}s_{t!}^\ell: D_{\bZ/\ell}((X^2\times \bR_t)^\ell) \rightarrow D_{\bZ/\ell}(\bR).
\end{equation}
Then $\alpha_{\ell,T,X}=i_T^{-1} F_{\ell,X}$. 

Similarly, we define $\alpha'_{\ell,T,X},\beta'_{\ell,T,X}, F'_{\ell,T,X}$ by removing $s_{t}^\ell$ in the corresponding definitions. If there is no risk of confusion, forget some of $\ell, T, X$ in subscripts of $\alpha,\beta,F$ for simplicity.
\begin{RMK}
We will use $\alpha_{\ell,T,X}, \beta_{\ell,T,X}$ ($\alpha'_{\ell,T,X}, \beta'_{\ell,T,X}$), and $ F_{\ell,X}$ ($ F'_{\ell,X}$) in the non-equivariant categories. We denote them by the same notation later.
\end{RMK}
\begin{Def}\label{def CT complex}With the notation above, we first define
\[F_{\ell}(U,\bK)\coloneqq F_{\ell,X}(P_U^{\dboxtimes \ell})=F'_{\ell,X}(P_U^{\boxstar \ell})\in D_{\bZ/\ell}(\bR).\]
Then we define an object of $D_{\bZ/\ell}(\text{pt})$ that we call the {\em Chiu-Tamarkin complex} by
\begin{align*}
    C^{\bZ/\ell}_T(U,\bK)&=\RHOM_{\bZ/\ell}\left(\alpha_{\ell,T,X}(P_U^{\dboxtimes \ell}),\bK[-d]  \right)\\
    &=\RHOM_{\bZ/\ell}\left((F_{\ell}(U,\bK))_T, \bK[-d]\right)\\
    &\cong\RHOM_{\bZ/\ell}\left(P_U^{\dboxtimes \ell},\beta_{\ell,T,X}\bK[-d]   \right). 
\end{align*}
We set $A = \text{Ext}_{\bZ/\ell}^*(\bK,\bK)$, which is isomorphic to $H_{\bZ/\ell}^*(B\bZ/\ell.\bK)$ (see \eqref{equivariant cohomology of point}). Then $H^*C^{\bZ/\ell}_T(U,\bK)$ and is a graded module over $A \cong \text{Ext}_{\bZ/\ell}^*(\bK[-d],\bK[-d])$ via the Yoneda product.

When $\ell=1$, i.e. the cyclic group $\bZ/1$ is trivial, we also denote the non-equivariant Chiu-Tamarkin complex $C^{\bZ/1}_T(U,\bK)$ by $C_T(U,\bK)$.
\end{Def}
\begin{RMK}\label{remark def CT}
\begin{enumerate}[fullwidth]
    \item The object $C^{\bZ/\ell}_T(U,\bK)$ is mentioned by Tamarkin in \cite{mc-Tamarkin}, and is defined explicitly by  Chiu in \cite{chiu2017}. Our definition looks slightly different from the definition of Chiu. However, one can check directly that, when $X$ is orientable, $ \beta_{\ell,T,X}\bK[-d] $ is exactly the constant sheaf supported on the twisted diagonal with a degree shift depending only on $\ell$ and $\dim 
X$. So the complex $C^{\bZ/\ell}_T(U,\bK)$ is essentially the same as what Chiu defined. 

\item Recall \autoref{functorial lemma}, we have that, for all $\ell\in \bO$, $P_U^{\star \ell}\cong P_U$ in $\cD(X^2)$. Then we have $F_1(P_U^{\star \ell})\cong F_1(P_U)$. However the definition of convolution shows $F_1(P_U^{\star \ell})\cong F_\ell(P_U^{\dboxtimes \ell})$. Therefore, we obtain an isomorphism, in the non-equivariant derived category,
\begin{equation}\label{cyclicstructure step1}
    F_1(U,\bK)\cong F_\ell(U,\bK).
\end{equation}
So we have $C_{T}(U,\bK)\cong \RHOM\left((F_{\ell}(U,\bK))_T, \bK[-d]\right)$ (in the non-equivariant derived category). In this way, it is clear that $C^{\bZ/\ell}_T(U,\bK)$ is the equivariant generalization of $C_{T}(U,\bK)$.
\end{enumerate}
\end{RMK}

Let us compute an example when $U=T^*X$. 
Recall $P_{T^*X}=\bK_{\Delta_{X^2}\times [0,\infty)}$, so we have
$\Tilde{\Delta}^{-1}\left( P_{T^*X}^{\boxstar \ell}\right)= \bK_{\Delta_{X^{\ell}}\times [0,\infty) }.$ Then we obtain
\[F_{\ell}(T^*X,\bK)= {\textnormal{R}}\pi_{\underline{\bq}!}(\bK_{\Delta_{X^{\ell}}\times [0,\infty) })=E_{[0,\infty)}, \]
where $E=R\Gamma_c(\Delta_{X^{\ell}},\bK)$, $E_{[0,\infty)}$ is the constant sheaf supported on $[0,\infty)$
and ${\bZ/\ell}$ acts on $E={\textnormal{R}}\Gamma_c(\Delta_{X^{\ell}},\bK)\cong {\textnormal{R}}\Gamma_c(X,\bK)$ trivially.
Since ${\bZ/\ell}$ acts on $E$ trivially, we have, by Poincar\'e-Verdier duality,
\begin{equation}\label{F of the cotangent bundle}
  \begin{split}
    &C^{\bZ/\ell}_T(T^*X,\bK)\cong  \RHOM_{\bZ/\ell}(E,\bK[-d])   
    \cong  \RHOM_{\bZ/\ell}(\bK,\bK)\dotimes\RHOM(E,\bK[-d])  \\ \cong & \RHOM_{\bZ/\ell}(\bK,\bK)\dotimes {\textnormal{R}}\Gamma(X,\omega_{X}[d])
  \cong  \RHOM_{\bZ/\ell}(\bK,\bK)\dotimes {\textnormal{R}}\Gamma_X(T^*X,\bK)[d]. \\
\end{split}
\end{equation}
Finally, for $T\geq 0$ and a field $\bK$, we have 
\begin{align}\label{C of the cotangent bundle}
\begin{aligned}
&H^*C^{\bZ/\ell}_T(T^*X,\bK)\cong   A\otimes H^{BM}_{d-*}(X,\bK) \cong   A\otimes H^{*+d}_X(T^*X,\bK),
\end{aligned}
\end{align}
where $H^{BM}_{*}=H^{-*}(X,\omega_X)$ stands for the Borel-Moore homology of $X$.

One of the most important theorems about the Chiu-Tamarkin complex is 
\begin{Thm}[{{Theorem 4.7 of} \cite{chiu2017}}]\label{invariance1} Let $U,U_1,U_2$ be admissible open sets and let $U_1\xhookrightarrow{i} U_2$ be an inclusion. Then one has, for $T\geq 0$, 
\begin{enumerate}[fullwidth]
    \item There is a morphism $C^{\bZ/\ell}_T(U_2,\bK) \xrightarrow{i^*} C^{\bZ/\ell}_T(U_1,\bK)$, which is functorial with respect to inclusions of admissible open sets.
    \item For a compactly supported Hamiltonian isotopy $\varphi:I\times T^*X  \rightarrow T^*X $, then there is an isomorphism, in the equivariant category, $\Phi^{\bZ/\ell}_{z,T }:C^{\bZ/\ell}_T(U,\bK) \xrightarrow{\cong} C^{\bZ/\ell}_T\left(\varphi_z(U),\bK\right)$, for all $z\in I$. The isomorphism $\Phi^{\bZ/\ell}_{z,T}$ is functorial with respect to the restriction morphisms in (1). When $U=T^*X$, we have $\Phi^{\bZ/\ell}_{z,T}=\id$.
\end{enumerate}
\end{Thm}
Taking into account the structure of $A=\text{Ext}^*_{\bZ/\ell}(\bK,\bK)$-modules, we have 

\begin{Coro}\label{moduleisomorphism}With the notation of \autoref{invariance1}, we have:
\begin{enumerate}[fullwidth]
\item $H^*(i^*)$ is a morphism of $A$-modules.
\item $H^*(\Phi^{\bZ/\ell}_{z,T})$ is an isomorphism of $A$-modules.
\end{enumerate}
\end{Coro}
For our later application, let us present a proof here. The notation is the same as in \autoref{invariance1}.
\begin{proof}[{Proof of \autoref{invariance1}:}]

\begin{enumerate}[fullwidth]
    \item \label{remarkChiustheorem}Recall \autoref{functorial} shows that we have a natural morphism $P_{U_1}\rightarrow P_{U_2}$. Then we have an equivariant morphism $P_{U_1}^{\dboxtimes\ell }\rightarrow P_{U_2}^{\dboxtimes\ell }$. Applying $F_{\ell}$, we obtain
    \begin{equation}\label{functoriality of F}
     F_{ \ell}(U_1,\bK)\xrightarrow{F_{\ell}(i,\bK)}F_{\ell}(U_2,\bK).   
    \end{equation} 
    Then the first part follows by taking stalks over $T$.

\item To prove the invariance we use the expression $C^{\bZ/\ell}_T(U,\bK) \cong \RHOM_{{\bZ/\ell}}
  (P_U^{\boxstar \ell},\beta'_T\bK[-d])$ given by the adjoint isomorphism.

\begin{itemize}[fullwidth]
\item  It is shown in \autoref{invariance of kernel} that we have an isomorphism  $P_{\varphi_z(U)} \cong \cK^{-1} \star P_U \star \cK$ where $\cK$ is given using the GKS quantization of $\varphi$.

Let us write $\cK_\ell = \cK^{\boxstar \ell}$, $\cK_\ell^{-1} = (\cK^{-1})^{\boxstar \ell}$. We remark that $\cK_\ell$ has a natural lift in the equivariant category and that $\cK_\ell$, $\cK_\ell^{-1}$
are mutually inverse for the convolution. Hence $\cK_\ell \star -$ is an equivalence and
$\RHOM_{{\bZ/\ell}}(G,H) \cong \RHOM_{{\bZ/\ell}}( \cK_\ell \star G, \cK_\ell \star H)$ for any $G,H\in
D_{\bZ/\ell}(X^{2l}\times \bR_t)$.  
We denote by $\kappa$ the auto-equivalence on $D_{\bZ/\ell}(X^{2\ell}\times \bR_t)$ induced by conjugation with $\cK_\ell$:
\begin{equation}\label{definition of kappa}
\kappa(F)\coloneqq \mathcal{K}^{-1}_\ell\star F \star  \mathcal{K}_{\ell}.
\end{equation}
Then we have an isomorphism 
$P_{\varphi_z(U)}^{\boxstar \ell} \cong \cK_\ell^{-1} \star P_U^{\boxstar \ell} \star \cK_\ell=\kappa(P_U^{\boxstar \ell} )$, and for $U=T^*X$, the isomorphism is realized by $\bK_{\Delta_{X^2}\times    [0,\infty)}^{\boxstar \ell}\cong\cK_\ell^{-1}\star \cK_\ell \star \bK_{\Delta_{X^2}\times [0,\infty)}^{\boxstar \ell} \cong \cK_\ell^{-1} \star \bK_{\Delta_{X^2}\times [0,\infty)}^{\boxstar \ell} \star \cK_\ell$.
Then the composition induces the isomorphism
\[\RHOM_{{\bZ/\ell}}(P_U^{\boxstar \ell}, \beta'_T\bK ) \overset{\kappa}{\cong} 
\RHOM_{{\bZ/\ell}}(\kappa(P_U^{\boxstar \ell}),\kappa( \beta'_T \bK )){\cong} \RHOM_{{\bZ/\ell}}(P_{\varphi_z(U)}^{\boxstar \ell},\kappa( \beta'_T \bK )).
\]
\item Therefore, to complete the proof, it is enough to construct an isomorphism $\kappa(\beta'_T\bK)= \cK_\ell^{-1} \star \beta'_T\bK \star \cK_\ell \cong
\beta'_T\bK$. Compared to Chiu's original proof, we will construct the isomorphism explicitly.

Notice that $\beta'_T\bK$ is, up to orientation and shift, the
constant sheaf on the graph of the permutation map $f \colon X^\ell \to X^\ell$, $(\bq_1,\ldots,\bq_\ell) \mapsto (\bq_2,\ldots,\bq_\ell,\bq_1)$. Set $Y=X^\ell $ and identify $Y^2 = (X^2)^\ell$ by $(\bq^1_1,\ldots,\bq^1_\ell, \bq^2_1,\ldots,\bq^2_\ell) \mapsto (\bq^1_1,\bq^2_1,\ldots,\bq^1_\ell,\bq^2_\ell)$. Then, up to degree shifting, we have \[\beta'_T\bK \cong\bK_{\Gamma_f\times \{T\}} \star E \cong E\star \bK_{\Gamma_f\times \{T\}} ,\] where $E = \delta_{Y^2!}(\omega_Y)\dboxtimes \bK_{\{0\}}$, with $\omega_Y$ the dualizing sheaf and $\delta_{Y^2}$ the usual diagonal embedding. In general, we have $E\star- \cong -\star E$.

Now we have the general fact $G \star \bK_{\Gamma_g\times \{T\}} \cong (\id_Y \times g \times \textnormal{T}_{T})_!(G)$ for any $G$ and any map $g$.  This formula has the symmetric form $\bK_{\Gamma'_g\times \{T\}} \star G\cong (g\times \id_Y \times \textnormal{T}_{T})_!(G)$ where $\Gamma'_g$ is the switched graph $\Gamma'_g =\{(g(y),y): y\in Y\}$. When $g$ is invertible, we have $\Gamma_{g^{-1}}= \Gamma'_g$. So, we obtain
\[\cK_\ell \star \beta'_T\bK \cong \cK_\ell \star \bK_{\Gamma_f\times \{T\}} \star E \cong   (\id_Y \times f \times \textnormal{T}_{T})_!(\cK_\ell)\star E,\]
and 
\[\beta'_T\bK \star \cK_\ell \cong E\star \bK_{\Gamma_f\times \{T\}}\star \cK_\ell=E\star \bK_{\Gamma'_{f^{-1}}\times \{T\}} \star \cK_\ell\cong E \star (f^{-1} \times \id_Y \times \textnormal{T}_{T})_!(\cK_\ell).\]
In coordinate $(X^2)^\ell$ we have $(f\times f)((\bq^1_j,\bq^2_j))_{j\in \bZ/\ell}=((\bq^1_{j+1},\bq^2_{j+1}))_{j\in \bZ/\ell}$. In other words $f\times f$ is the cyclic permutation of the $X^2$ factors in $(X^2)^\ell$. It is then clear that $(f \times f \times \id_\bR)_!\cK_\ell \cong  \cK_\ell$ (even in the equivariant category). Then we deduced that
\[\beta'_T\bK\star \cK_\ell  \cong E \star (f^{-1} \times \id_Y \times \textnormal{T}_{T})_!(\cK_\ell)\cong E\star (\id_Y \times f \times \textnormal{T}_{T})_!(\cK_\ell)\cong \cK_\ell \star \beta'_T\bK.\]
Consequently, we have 
\[\kappa(\beta'_T\bK)= \cK_\ell^{-1} \star \beta'_T\bK \star \cK_\ell \cong \cK_\ell^{-1} \star \cK_\ell \star \beta'_T\bK\cong
\beta'_T\bK.\]
In summary, the $\Phi^{\bZ/\ell}_{z,T}$ is defined as following. For any $f\in \textnormal{Ext}^*_{\bZ/\ell}(P_U^{\boxstar \ell},\beta'_T\bK[-d])$, we have
\[\Phi^{\bZ/\ell}_{z,T}(f): P_{\varphi_z(U)}^{\boxstar \ell} {\cong} \kappa(P_U^{\boxstar \ell})\xrightarrow{\kappa(f)}\kappa(\beta'\bK[-d]) {\cong}\beta'\bK[-d].  \]
The functoriality of $\Phi^{\bZ/\ell}_{z,T}$ follows since $\kappa$ is a functor. \end{itemize}\end{enumerate}For $U=T^*X$, the isomorphism $P_{T^*X}^{\boxstar \ell} {\cong} \kappa(P_{T^*X}^{\boxstar \ell})$ is induced by the natural isomorphism $P_{T^*X}\star \cK\cong \cK \star P_{T^*X}$. So does $\kappa(\beta'\bK[-d]) {\cong}\beta'\bK[-d]$. Then the induced isomorphism $\Phi^{\bZ/\ell}_{z,T}(f)$ is the identity on the cohomology level.
\end{proof}
Actually, the isomorphism $\kappa(\beta'_T\bK) {\cong}\beta'_T\bK$ is still true if we replace $\bK$ by $M\in D_{\bZ/\ell}(\pt)$, and moreover the isomorphism is functorial with respect to $M$. In fact, for $M\in D_{\bZ/\ell}(\pt)$, we only need to replace $K=\delta_{Y^2!}(\pi_Y^!\bK)$ in the proof by $K(M)=\delta_{Y^2!}(\pi_Y^!M)$.
Consequently, we can construct an isomorphism of functors
\[\Phi^{\bZ/\ell}_{z,T}(-):\RHOM_{\bZ/\ell}(F_\ell(U,\bK)_T,-)  \xrightarrow{\cong} \RHOM_{\bZ/\ell}(F_\ell(\varphi(U),\bK)_T,-).\]
Now, let us take $M=F_\ell(\varphi(U),\bK)_T$. Then $\text{Id}_{F_\ell(\varphi(U),\bK)_T}$ provide us with an isomorphism
\[{\Phi^{\bZ/\ell }_{z,T}}'\coloneqq \left(\Phi^{\bZ/\ell}_{z,T}({F_\ell(\varphi(U),\bK)_T})\right)^{-1}(\text{Id}_{F_\ell(\varphi(U),\bK)_T}):F_\ell(U,\bK)_T\rightarrow F_\ell(\varphi(U),\bK)_T.\]
In summary, we have
\begin{Prop}\label{strongly invariance}
For a compactly supported Hamiltonian isotopy $\varphi:I\times T^*X  \rightarrow T^*X $, there exists an isomorphism, in the equivariant category, ${\Phi^{\bZ/\ell }_{z,T}}':F_\ell(U,\bK)_T\rightarrow F_\ell(\varphi_z(U),\bK)_T$, for all $z\in I$.
\end{Prop}
\begin{RMK}In \cite[Subsection 3.7]{CyclicZHANG}, we explain how to give a new proof of the \autoref{strongly invariance} without using adjunctions. We also explain that the proposition is true for Hamiltonian homeomorphism in loc. cit. 
\end{RMK}

\subsection{Geometry of \texorpdfstring{$F_{\ell}(U,\bK)$}{}}\label{Discrete Hamiltonian loop section}
In this subsection, we assume that $U$ is dynamically admissible (\autoref{dynamicallyopensets}) and we give a more accessible expression  of the Chiu-Tamarkin complex using sheaf quantization. We then discuss the underlying geometry.

Following ideas of Chiu, we first compute $F_{\ell}(U,\bK)\cong {\textnormal{R}}\pi_{\underline{\bq}!}  \widetilde{\Delta}_X^{-1}\left({\textnormal{R}}s_{t!}^\ell P_U^{\dboxtimes \ell}\right)$ going back to the construction of $P_U$.

We recall that $\mathcal{K}$ is the sheaf quantization of a Hamiltonian $\bR^m_{ {z}}$ action on $T^*X$ with a  moment map $\mu$, $\Omega \subset \bR_{ {\zeta}}^m$ and $U=\mu^{-1}(\Omega)$. Then we have $P_U\cong \widehat{\mathcal{K}}\circ {\bK}_{\Omega} \cong \mathcal{K}\star \widetriangle{{\bK}_{\Omega}}$.

As a corollary of the proper base change and the projection formula, we have the following:
\begin{align*}
P_U^{\dboxtimes \ell} &\cong  (\mathcal{K}\star \widetriangle{{\bK}_{\Omega}} )^{\dboxtimes \ell}
\cong \textnormal{R}\pi_{\underline{z}!}\textnormal{R}s_{{\bR^\ell!}}^{2}\left( \pi_{{t_2}}^{-1}\cK^{\dboxtimes \ell} \dotimes   \pi_{{t_1}}^{-1}\widetriangle{{\bK}_{\Omega}}^{\dboxtimes \ell}\right).
\end{align*}
Next, we have
\begin{align*}
   F_{\ell}(U,\bK) &\cong {\textnormal{R}}\pi_{\underline{\bq}!}   \widetilde{\Delta}_X^{-1}{\textnormal{R}}s_{t!}^\ell\textnormal{R}\pi_{\underline{z}!} \textnormal{R}s_{\bR^\ell!}^{2}\left( \pi_{{t_2}}^{-1}\cK^{\dboxtimes \ell} \dotimes   \pi_{{t_1}}^{-1}\widetriangle{{\bK}_{\Omega}}^{\dboxtimes \ell}\right)  \\
&\cong \textnormal{R}\pi_{\underline{z}!}{\textnormal{R}}s_{t!}^\ell\textnormal{R}s_{{\bR^\ell!}}^{2} \left( \pi_{t_2}^{-1}\left({\textnormal{R}}\pi_{\underline{\bq}!}    \widetilde{\Delta}^{-1}_X\mathcal{K}^{\dboxtimes \ell  }   \right) \dotimes \pi_{t_1}^{-1}  \widetriangle{{\bK}_{\Omega}}^{\dboxtimes \ell  }     \right),
\end{align*}
where $\underline{z}=(z_1,\dots,z_\ell)\in (\bR^m)^\ell$, $t_i=(t_i^1,\dots,t_i^\ell)\in \bR^\ell$ for $i=1,2$, and $t=(t^1,\dots,t^\ell)=s^2_{
\bR^\ell}(t_1,t_2)$. Now, let $z=z_1+\cdots+z_\ell$ and take $t'_i=t_i^1+\cdots+t_i^\ell$. Using this change of coordinate, we have the decomposition $\pi_{\underline{z}}=\pi_z  s^\ell_{{z}} $ and $ s_{t}^\ell s_{{\bR^\ell}}^{2}=s_{t'}^2 (s_{t_1}^\ell\times s_{t_2}^\ell)$. Therefore, we obtain
\begin{align}\label{step1}
\begin{aligned}   F_{\ell}(U,\bK) &
\cong \textnormal{R}\pi_{\underline{z}!}{\textnormal{R}}s_{t!}^\ell\textnormal{R}s_{{\bR^\ell!}}^{2} \left( \pi_{t_2}^{-1}\left({\textnormal{R}}\pi_{\underline{\bq}!}    \widetilde{\Delta}^{-1}\mathcal{K}^{\dboxtimes \ell  }   \right) \dotimes \pi_{t_1}^{-1}  \widetriangle{{\bK}_{\Omega}}^{\dboxtimes \ell  }     \right)\\
&\cong {\textnormal{R}}\pi_{{ {z}}!}{\textnormal{R}}s_{t'!}^2 {\textnormal{R}}s_{{z}!}^\ell {\textnormal{R}}(s_{t_1}^\ell\times s_{t_2}^\ell)_!\left( \pi_{t_2}^{-1}\left({\textnormal{R}}\pi_{\underline{\bq}!}    \widetilde{\Delta}^{-1}\mathcal{K}^{\dboxtimes \ell  }   \right) \dotimes \pi_{t_1}^{-1}  \widetriangle{{\bK}_{\Omega}}^{\dboxtimes \ell  }     \right)\\
&\cong {\textnormal{R}}\pi_{{ {z}}!}{\textnormal{R}}s_{t'!}^2 {\textnormal{R}}s_{{z}!}^\ell  \left( \pi_{t'_2}^{-1}\left({\textnormal{R}}\pi_{\underline{\bq}!}    \widetilde{\Delta}^{-1}\mathcal{K}^{\boxstar \ell  }   \right) \dotimes \pi_{t'_1}^{-1}   \widetriangle{{\bK}_{\Omega}}^{\boxstar \ell  }     \right).
\end{aligned}
\end{align}
This formula shows, as the construction itself, that we can consider separately the Hamiltonian action and the cut-off by $\Omega$. Let us study the Hamiltonian action first.
In view of \eqref{step1}, it is convenient to define 
\begin{equation}
  \label{eq:defLell}
  \begin{split}
    CL_\ell(\mathcal{K}) &\coloneqq {\textnormal{R}}\pi_{\underline{\bq}!} (\widetilde{\Delta}^{-1}(\mathcal{K}^{\boxstar \ell})) \in D_{\bZ/\ell}((\bR^m_z)^{\ell}  \times  \bR_t) ,\\
   {\mathcal{CL}}_\ell(\mathcal{K}) &\coloneqq {\textnormal{R}}s_{{z}*}^{\ell}\, CL_\ell(\mathcal{K}) \in D_{\bZ/\ell}(\bR^m_z  \times  \bR_t) .
  \end{split}
\end{equation}
Noticed that the formula $CL_\ell(\mathcal{K}) = {\textnormal{R}}\pi_{\underline{\bq}!} (\widetilde{\Delta}^{-1}(\mathcal{K}^{\boxstar \ell}))$ is similar to $F'_{\ell}(P_U^{\boxstar \ell})={\textnormal{R}}\pi_{\underline{\bq}!} (\widetilde{\Delta}^{-1}(P_U^{\boxstar \ell}))$ (see \eqref{definition of F}). But in these two formulas, $\pi_{\underline{\bq}}$ has different meaning. The codomain of the first $\pi_{\underline{\bq}}$ is $\bR^m_{z}\times \bR_t$ while the codomain of second $\pi_{\underline{\bq}}$ is just $\bR_t$. So they are different formulas.

The sheaves $CL_\ell(\cK)$ and ${\mathcal{CL}}_\ell(\cK)$ encode the cohomology information of a discrete Hamiltonian loop space. Precisely, we have
\begin{Prop}\label{Discrete Hamiltonian loop} With the notation \eqref{eq:defLell} we have
\begin{enumerate}[fullwidth]
\item The sectional microsupport $\mu s(C{{L}}_\ell(\mathcal{K}))$, which is a subset of $T^*(\bR^m_z)^{\ell} $,  is contained in
\[
\left\lbrace ({z}_j, {\zeta}_j)_{j\in \bZ/\ell}:
    \begin{aligned}
    &\text{There exist } ( \bq_j, \bp_j)_{j\in \bZ/\ell}\in T^*(( X^2)^\ell)\text{ such that }
\\
&(\bq_{j+1}, \bp_{j+1})= {z_j}\cdot(\bq_j,\bp_j),\, \zeta_j=-\mu(\bq_j,\bp_j)\, ,
j\in \bZ/\ell
    \end{aligned}
   \right\rbrace  
\]
\item $C{{L}}_\ell(\mathcal{K})\cong (s_{ {z}}^{\ell})^{-1}{\textnormal{R}}s_{ {z}*}^{\ell}C{{L}}_\ell(\mathcal{K})$, $\mathcal{CL}_\ell(\mathcal{K})\cong {\textnormal{R}}s_{ {z}*}^{\ell}(s_{ {z}}^{\ell})^{-1}\mathcal{CL}_\ell(\mathcal{K})$.
\end{enumerate}
\end{Prop}
\begin{proof}
\begin{enumerate}[fullwidth]
\item It follows directly from the functorial estimate of microsupport.
First, the formula \eqref{microsupportofsheafquantization} shows that 
\[\mu s (\cK^{\boxstar\ell} )\subset \{({z}_j, {\zeta}_j, \bq_j, -\bp_j,\bq_j', \bp_j')_{j\in \bZ/\ell}:(\bq'_{j}, \bp'_{j})= {z_j}\cdot(\bq_j,\bp_j),\, j\in \bZ/\ell\}.\]
The transpose derivative of $\widetilde{\Delta}$ is given by
\[d\widetilde{\Delta}^*(\bq_{\ell},\bq_1,\dots, \bq_{\ell-1},\bq_{\ell};\bp_1,\bp_2,\dots,\bp_{2\ell-1},\bp_{2\ell})=(\bq_1,\dots, \bq_{\ell};\bp_2+\bp_3,\dots,\bp_{2\ell}+\bp_{1}).\]
By the bound \autoref{functional estimate}-(3), we deduce that $\mu s (\widetilde{\Delta}^{-1}(\cK^{\boxstar\ell})) $ is a subset of
\[ \left\lbrace ({z}_j, {\zeta}_j,\bq_j'',\bp_j'')_{j\in \bZ/\ell}:
    \begin{aligned}
    &\text{There exist } ( \bq_j, -\bp_j,\bq_j', \bp_j')_{j\in \bZ/\ell}\in T^*((X^2)^{2\ell})\text{ such that }
(\bq'_{j}, \bp'_{j})= {z_j}\cdot(\bq_j,\bp_j), \\
&\bq_j''=\bq_j'=\bq_{j+1},\,
\bp_j''=\bp_j'-\bp_{j+1},\zeta_j=-\mu(\bq_j,\bp_j)\, ,
j\in \bZ/\ell
    \end{aligned}
   \right\rbrace  .\]
Finally, let us apply the non proper estimate \autoref{non proper pushforward estimate}. The set $\pi^{\#}_{\underline{\bq''}}(SS(\widetilde{\Delta}^{-1}(\mathcal{K}^{\boxstar \ell})))$ comes from forgetting $\bq_j''$ for all $j$ from $SS(\widetilde{\Delta}^{-1}(\mathcal{K}^{\boxstar \ell}))$. Then $(z_j,\zeta_j,t,1)_{j\in \bZ/\ell}\in \mu s(C{{L}}_\ell(\mathcal{K}))$ if there exists a sequence $({z}_j^n, {\zeta}_j^n,\bp_j''^n)_{j\in \bZ/\ell} \in \pi^{\#}_{\underline{\bq''}}(SS(\widetilde{\Delta}^{-1}(\mathcal{K}^{\boxstar \ell})))$ such that ${z}_j^n\rightarrow z_j$, $\zeta_j^n\rightarrow \zeta_j$, and $\bp_j''^n\rightarrow 0$ for all ${j\in \bZ/\ell}$.

On the other hand, the relations above imply that there exists $( \bq_j^n, -\bp_j^n,\bq_j{'}^n, \bp_j{'}^n)_{j\in \bZ/\ell}\in T^*((X^2)^{\ell})$ such that $(\bq_{j}{'}^n, \bp_{j}{'}^n)= {z_j^n}\cdot(\bq_j^n,\bp_j^n)$ and $\bq_j''^n=\bq_j'^n=\bq_{j+1}^n,\,
\bp_j''^n=\bp_j'^n-\bp_{j+1}^n,\zeta_j^n=-\mu(\bq_j^n,\bp_j^n)$ for all ${j\in \bZ/\ell}$. So the continuity of the group action and the moment map show that, after taking limit $n\rightarrow \infty$, we have $(\bq'_{j}, \bp'_{j})= {z_j}\cdot(\bq_j,\bp_j)$ and $\bq_j''=\bq_j'=\bq_{j+1},\,
0=\bp_j'-\bp_{j+1},\zeta_j=-\mu(\bq_j,\bp_j)$ for all $ {j\in \bZ/\ell}$. Then we have that $\mu s(C{{L}}_\ell(\mathcal{K}))$ is contained in
\[ \left\lbrace ({z}_j, {\zeta}_j)_{j\in \bZ/\ell}:
    \begin{aligned}
    &\text{There exist } ( \bq_j, -\bp_j,\bq_j', \bp_j')_{j\in \bZ/\ell}\in T^*((X^2)^{\ell})\text{ such that }
(\bq'_{j}, \bp'_{j})= {z_j}\cdot(\bq_j,\bp_j), \\
&\bq_j''=\bq_j'=\bq_{j+1},\,
{0}=\bp_j''=\bp_j'-\bp_{j+1},\zeta_j=-\mu(\bq_j,\bp_j)\,,
j\in \bZ/\ell
    \end{aligned}
   \right\rbrace.\]
Finally, we simplify the notation by reducing the variables with primes.
\item If $(z_j,\zeta_j)_{j\in \bZ/\ell}\in \mu s(C{{L}}_\ell(\mathcal{K}))$, there exists $( \bq_j, \bp_j)_{j\in \bZ/\ell}\in T^*(( X^2)^\ell)$ such that $(\bq_{j+1}, \bp_{j+1})= {z_j}\cdot(\bq_j,\bp_j)$ for all ${j\in \bZ/\ell}$.
Therefore, the invariance of the moment map shows that 
\[\zeta_{j+1}=\mu(\bq_{j+1}, \bp_{j+1})=(\bq_{j}, \bp_{j})=\zeta_j,\qquad j\in \bZ/\ell.\]
Then, the isomorphism follows from \cite[Proposition 5.4.5(ii)]{KS90}.
\end{enumerate}\end{proof}
\begin{RMK}Even if $\mathcal{K}$ is a sheaf quantization that comes from a non-autonomous Hamiltonian function, the microsupport estimate for $CL_\ell(\mathcal{K})$ is still true, but the second statement is not true in this case.
\end{RMK}
Now, using the projection formula, we can write the formula \eqref{step1} as
\begin{equation}\label{loop sheaf rep F}
F_{\ell}(U,\bK)  \cong {\textnormal{R}}\pi_{{ {z}}!}{\textnormal{R}}s_{t!}^2   \left( \pi_{t_2}^{-1}  {{\mathcal{CL}}}_\ell(\mathcal{K}) \dotimes \pi_{t_1}^{-1}  {\textnormal{R}}s_{z!}^\ell\widetriangle{{\bK}_{\Omega}}^{\boxstar \ell  }\right).    
\end{equation}
Next, let us study ${\textnormal{R}}s_{z!}^\ell\widetriangle{{\bK}_{\Omega}}^{\boxstar \ell  }\cong {\textnormal{R}}s_{(z,t_2)!}^\ell\widetriangle{{\bK}_{\Omega}}^{\dboxtimes \ell  }$. First, with the help of \cite[Section 6, Appendix A]{FTDAgnolo},  $\widetriangle{{\bK}_{\Omega}}$ is the (inverse) Fourier-Sato transform $\widehat{\bK_{\Omega'}}$ of ${\bK}_{{\Omega}'}$, where ${\Omega}'=\{(\zeta,\tau): \tau\zeta\in \Omega, \tau>0\}$. Now, using the functorial properties of the Fourier-Sato transformation (see \cite[Section 3.7]{KS90}), and writing in the same way the two Fourier transforms, we have:
\begin{align*}
   {\textnormal{R}}s_{z!}^\ell\widetriangle{{\bK}_{\Omega}}^{\boxstar \ell  }\cong {\textnormal{R}}s_{(z,t_2)!}^\ell\widehat{{\bK}_{\Omega'}}^{\dboxtimes \ell  }
   \cong {\textnormal{R}}s_{(z,t_2)!}^\ell\widehat{{\bK}_{\Omega'}^{\dboxtimes \ell  }}
   \cong {\textnormal{R}}s_{(z,t_2)!}^\ell\widehat{{\bK}_{\Omega'^\ell}}
   \cong  \widehat{{(^ts_{(z,t_2)}^\ell)^{-1}{\bK}_{\Omega'^\ell}}}.
\end{align*}
Since the transpose of the summation map $s_{(z,t_2)}^\ell$ is the diagonal map $\delta_{(z,t_2)^\ell}$, we conclude that
\[{\textnormal{R}}s_{z!}^\ell\widetriangle{{\bK}_{\Omega}}^{\boxstar \ell  }\cong {\textnormal{R}}s_{(z,t_2)!}^\ell\widehat{{\bK}_{\Omega'}}^{\dboxtimes \ell  }
   \cong \widehat{\delta_{(z,t_2)^\ell}^{-1}{\bK}_{\Omega'^\ell}} \cong \widehat{{\bK}_{\Omega'}}\cong \widetriangle{{\bK}_{\Omega}}.\]
Our external tensor power is in fact an object of the ${\bZ/\ell}$-equivariant derived category. We need to mention that the Fourier transform (of any version) is a convolution functor defined by a kernel, which is a constant sheaf supported on a closed subset. So, on the product space, the Fourier transform is defined by a kernel that is a constant sheaf supported on a {\em product} of the same closed subsets. Then the kernel is a ${\bZ/\ell}$-equivariant sheaf. Moreover, the external tensor power is compatible with the Grothendieck 6-operations. Therefore, the Fourier transform can be defined on the equivariant derived category. Finally, all maps here are $\bZ/\ell$-equivariant with respect to cyclic permutation action and the formulas we used here are valid in the equivariant category. In conclusion,  all identities here are true in the equivariant derived category.

Consequently, \eqref{loop sheaf rep F} could be read as
\begin{align}\label{reformulation of F}
F_{\ell}(U,\bK)\cong {\textnormal{R}}\pi_{ {z}!}{\textnormal{R}}s_{t!}^2   \left(\pi_{t_2}^{-1}   {\mathcal{CL}}_\ell(\mathcal{K}) \dotimes\pi_{(\underline{\bq},t_1)}^{-1}  \widetriangle{{\bK}_{\Omega}}  \right) \cong {\textnormal{R}}\pi_{ {z}!}\left( {\mathcal{CL}}_\ell(\cK) \star  \widetriangle{{\bK}_{\Omega}}  \right).
\end{align}
From this formula, the study of $F_{\ell}(U,\bK)$ is reduced to understanding $ {\mathcal{CL}}_\ell(\mathcal{K})$.

The case $m=1$ is particularly useful for our applications. Now $\Omega=(-\infty,1)$ and $\widetriangle{{\bK}_{\Omega}}\cong\bK_{\{(z,t):-t\leq z\leq 0\}}$. For $T\geq 0$, \eqref{reformulation of F} shows
\begin{align}\label{reformulation of F_T}
\begin{aligned}
&\alpha_{\ell,X,T}(P_U^{\dboxtimes \ell})\cong F_{\ell}(U,\bK)_T\cong {\textnormal{R}}\Gamma_c\left(\bR_z\times\bR_{(t_1,t_2)}^2; \left({\mathcal{CL}}_\ell(\cK)\dboxtimes \bK_{\bR_{t_2}} \right)_{Z}\right),
\end{aligned}   
\end{align}
where $Z=\{(z,t_1,t_2):t_1+t_2=T,-t_2\leq z\leq 0\}$.

Again, using the formula \eqref{reformulation of F}, we obtain the following action spectrum estimate of the microsupport of $F_{\ell}(U,\bK)$ for dynamically admissible sets. 
\begin{Lemma}{{\cite[formula 74]{zhang2020quantitative}}}\label{lemma-actionspectrumestimate}Let $U=\{H<1\}$ be a dynamically admissible set defined by a Hamiltonian function $H$. If the boundary $\partial U$ is a non-degenerated hypersurface of restricted contact type (RCT) given by $\partial U=\{H=1\}$, then we have
\begin{equation}\label{actionspectrumestimate}
{\mu}s_L(F_\ell(U,\bK))\subset\left\lbrace t\in \bR: t= \bigg|\int_c \bp d\bq \bigg|\text{ for a closed orbit }  c\text{ of }\varphi_z^H \text{ in } \partial U \right\rbrace.
\end{equation}
\end{Lemma}
Actually, since $F_{1}(U,\bK)\cong F_{\ell}(U,\bK)$ in $D(\bR)$ (by \eqref{cyclicstructure step1}), we only need to verify the estimate for $F_1(U,\bK)$ (see \autoref{def: equivariant sheaf microsupport}). Notice that when computing the upper bound, we need the contact boundary condition to make sure we can attach only one non-constant closed characteristic.

Geometrically, we call the right hand side {\em the action spectrum} of the Reeb action in $\partial U$.

So far, we have found two different ways to understand $F_{\ell}(U,\bK)$. Initially, from the definition of $F_{\ell}(U,\bK)$, we first cut off the energy of a Hamiltonian isotopy up to Legendre transform to obtain the kernels and then use the functor $\alpha_T$ to obtain cohomology of some discrete loop space with action bound $T$. On the other hand, we can study discrete loops of a Hamiltonian isotopy first, and then cut off energy up to Legendre transform. The result of the section clarifies that these two ways are the same. The second way is more direct than the first in many cases; we will see more about this point of view when doing computation for toric domains.
\subsection{Fundamental class and capacities}\label{capacities}
Now, let us assume that $X$ is an oriented manifold of dimension $d$ with a fixed orientation and $\bK$ is a field. For an admissible open subset $U\xhookrightarrow{i_U} T^*X$ and $T \geq 0$,  \autoref{invariance1}-(1) shows that we have a morphism in the ${\bZ/\ell}$-equivariant derived category:
\[C^{\bZ/\ell}_T(T^*X,\bK) \xrightarrow{i_U^*} C^{\bZ/\ell}_T(U,\bK),\]
and it induces a morphism of $A=\text{Ext}^*_{\bZ/\ell}(\bK,\bK)$-module on cohomology
\[H^{BM}_{d-*}(X,\bK)\otimes A\cong H^*C^{\bZ/\ell}_T(T^*X,\bK) \xrightarrow{i_U^*} H^*C^{\bZ/\ell}_T(U,\bK),\]
where the first isomorphism is given in \eqref{F of the cotangent bundle}. Since $X$ is orientable, we have the fundamental class $[X]$ of $X$ in $H^{BM}_d(X,\bK)$, which is defined via $1\in H^0(X,\bK)\cong H^{BM}_d(X,\bK)$. We set $[X]^{\bZ/\ell}=[X]\otimes 1$, where $1\in A$ is the identity. 
\begin{Def}\label{definition of fundamental class}For an admissible open set $U\xhookrightarrow{i_U} T^*X$, and $T\geq 0$, we define its {\em fundamental class} $\eta^{\bZ/\ell}_T(U,\bK)$ as the image of $[X]^{\bZ/\ell}$ under $i_U^*$. That is, $\eta_T^{\bZ/\ell}(U,\bK)\coloneqq i_U^*([X]^{\bZ/\ell})\in H^0C^{\bZ/\ell}_T(U,\bK).$ When $\ell=1$, we use $\eta_T(U,\bK)$ for short.
\end{Def}
By definition, the fundamental class can be computed as the following composition: \begin{align}
  (F_{\ell}(U,\bK))_T \rightarrow (F_{\ell}(T^*X, \bK))_T\cong {\textnormal{R}}\Gamma_c(X,\bK)\xrightarrow{or} H^d{\textnormal{R}}\Gamma_c(X,\bK)[-d]\cong \bK[-d].
  \end{align}	
As a corollary of \autoref{invariance1}, we have
\begin{Prop}\label{functorial fundamental class} \begin{enumerate}[fullwidth]
    \item Let $U\subset U' \subset T^*X$ be an inclusion of admissible open sets. Through the natural morphism
\[H^0{C^{\bZ/\ell}_T(U',\bK)}\rightarrow H^0{C^{\bZ/\ell}_T(U,\bK)}\]
we have
\[\eta^{\bZ/\ell}_T(U',\bK) \mapsto \eta^{\bZ/\ell}_T(U,\bK).\]
\item Let $\varphi:I\times  T^*X \rightarrow T^*X$ be a compactly supported Hamiltonian isotopy and $U$ be an admissible open set. Recall the $A$-module isomorphism, defined in \autoref{invariance1},  \[H^*(\Phi^{\bZ/\ell}_{z,T}): H^*C^{\bZ/\ell}_T(U,\bK) \xrightarrow{\cong} H^*C^{\bZ/\ell}_T(\varphi_z(U),\bK).\]
Then we have $H^0(\Phi^{\bZ/\ell}_{z,T})(\eta^{\bZ/\ell}_T(U,\bK))=\eta^{\bZ/\ell}_T(\varphi_z(U),\bK)$ for all $z\in I$.
\end{enumerate}
\end{Prop}
We have $\eta^{\bZ/\ell}_T(T^*X,\bK)=[X]^{\bZ/\ell}$ for all $T\geq 0$. So, if there exists an open set $X'\subset X$ such that $U\subset T^*X'\subset T^*X$, we have $\eta^{\bZ/\ell}_T(U,\bK)=i_U^*([X]^{\bZ/\ell})=i_U^*([X']^{\bZ/\ell})$ by \autoref{functorial fundamental class}-(1).

Now, for $\ell \in \bN_{\geq 2}$, $p_\ell$ is the minimal prime factor of $\ell$, and $\bF_{p_\ell}$ is the finite field of order $p_\ell$. The Yoneda algebra $A=\text{Ext}^*_{\bZ/\ell}(\bF_{p_\ell},\bF_{p_\ell})$ is isomorphic to ${\bF_{p_\ell}}[u,\theta]$ (see \eqref{equivariant cohomology of a point}), where $|u|=2,\,|\theta|=1$, and $\theta^2=ku$ ($k=0$ if $\ell$ is odd and $k=\ell/2$ if $\ell$ is even). 

\begin{Def}\label{definition of capacities}For an admissible open set $U$ and $k\in \bN$ we define
\begin{equation*} \label{definition of c_k}\textnormal{Spec}(U,k) \coloneqq
\left\lbrace
  T \geq 0:\begin{aligned} &\exists p\text{ prime such that }\forall\ell \in \bO, \, p_\ell \geq p,\, \\&\eta^{\bZ/\ell}_T(U,\bF_{p_\ell}) \in u^kH^{*}C^{\bZ/\ell}_T(U,\bF_{p_\ell})
  \end{aligned}
\right\rbrace,
\end{equation*}
and
\begin{equation}
 c_k(U)\coloneqq \inf \textnormal{Spec}(U,k) \in [0,+\infty ].
 \end{equation}For a general open set $U$, we define
\[c_k(U)=\sup\{c_k(U'): U'\subset U,\,U'\text{ is admissible}\}.\]
\end{Def}
In the following, we will prove that $(c_k)_{k\in\bN}$ defines a sequence of non-trivial symplectic capacities.
\begin{Thm}\label{capacity property symplectic} The functions $c_k:\text{Open}(T^*X)\rightarrow [0,\infty]$ satisfy the following:
\begin{enumerate}[fullwidth]
    \item $c_k \leq c_{k+1}$ for all $k\in \bN$.
    \item For two open sets $U_1 \subset U_2$, we have $c_k(U_1) \leq c_k(U_2)$.
    \item For a compactly supported Hamiltonian isotopy $\varphi:I\times  T^*X \rightarrow T^*X$, we have 
    $c_k(U)=c_k(\varphi_z(U)).$ 

    \item If $X=\bR^d$, then $c_k(rU)=r^2c_k(U)$ for all $k\in \bN$ and $r>0$.
    \item Suppose $U=\{H<1\}$ is admissible such that $\partial U=\{H=1\}$ is a non-degenerated hypersurface of restricted contact type defined by a Hamiltonian function $H$. If $c_k(U) < \infty $, then $c_k(U)$ is represented by the action of a closed characteristic in the boundary  $\partial U$.
    
    \item $c_k(U)>0$ for all open sets $U$.
\end{enumerate}
\end{Thm}
\begin{proof} We can assume $U$ is admissible; the general case follows directly.
Then (1) is a consequence of \autoref{definition of capacities}. Results (2), (3) are corollaries of \autoref{functorial fundamental class}.

For (4), recall that \autoref{conformal invariance of kernel} shows that
$P_{rU} \cong \bar{R}_!P_U$ where $\bar{R}(\bq,\bq',t)=(\bq/r,\bq'/r,t/r^2)$. Then direct computation shows that we have an isomorphism in $D_{\bZ/\ell}(\bR)$:
\[R_!F_{\ell}(rU,\bK)\cong F_{\ell}(U,\bK),\]
where $R(t)=t/r^2$. In particular, we have $F_{\ell}(rU,\bK)_{r^2T}\cong F_{\ell}(U,\bK)_{T}$ for $T\geq 0$. This isomorphism commutes with the inclusion morphism induced by $U\subset T^*\bR^d$, the (4) follows.

For (5), let $T=c_k(U)$. Suppose that it is not given by the action of a closed characteristic. 

By assumption, the boundary $\partial U$ has non-degenerated Reeb dynamics, so there are only finitely many closed characteristics with action less than $2T$. So there is a small $\varepsilon>0$ such that there is no action happening in $[T-\varepsilon,T+\varepsilon]$.

However, we have the following microsupport estimate \eqref{actionspectrumestimate} for all fields $\bK$: 
\[{\mu}s_L(F_{\ell}(U,\bK))\subset\left\lbrace t\in \bR: t= \bigg|\int_c \bp d\bq \bigg|\text{ for some closed orbit }  c\text{ of }\varphi_z^H \right\rbrace.\]
Therefore $F_{\ell}(U,\bK)$ is constant on $[T-\varepsilon,T+\varepsilon]$. Consequently, $(F_{\ell}(U,\bK))_{T-\varepsilon}\cong (F_{\ell}(U,\bK))_T$, and then $\eta^{\bZ/\ell}_{T-\varepsilon}(U,\bK)=\eta^{\bZ/\ell}_T(U,\bK)$ for all $\ell$ and all $\bK$, in particular for $\bK=\bF_{p_\ell}$ for all $
\ell\in \bN$. Then we have $c_k(U)\leq T-\varepsilon$, which gives a contradiction. So we have
\begin{equation}
c_k(U)\in \left\lbrace \bigg|\int_c \bp d\bq \bigg|: c \text{ is a closed orbit of }\varphi_z^H \right\rbrace.\end{equation}
Finally, let us prove that the $c_k$'s are positive.
We will see, in \autoref{nonzeroellisoid}, that for a ball $B_a$, one has $c_k(B_a)=\lceil k/d \rceil a$.

For a general admissible open set $U$, we can assume that there exists $(\bq,0)\in U$ by applying a compactly support cut-off of a translation along $p$-direction, which is a Hamiltonian map. It does not change $c_k(U)$ by (3). Then we take a neighborhood $X'\cong \bR^d$ of $\bq$. By (2), we have $c_k(U)\geq c_k(U\cap T^*X')$. To prove $c_k(U\cap T^*X')>0$, let us take an admissible open subset $W$ of $U\cap T^*X'$ such that $(\bq,0)\in W$.

On the other hand, the functorial property \autoref{functorial fundamental class}-(1) shows that $\eta^{\bZ/\ell}_T(W,\bK)=i_W^*([X]^{\bZ/\ell})=i_W^*([X']^{\bZ/\ell})$. So, we only need to think $W$ as an open subset of $T^*X'\cong T^*\bR^d$, and then we can assume $X=X'=\bR^d$ and $\bq=0$ now. We take a standard symplectic ball $B_a\subset W$, then $c_k(W)\geq c_k(B_a)>0$. Consequently, we have $c_k(U)\geq c_k(U\cap T^*X')\geq c_k(W)>0 $.
\end{proof}
\begin{RMK}We also see from $c_k(B_a)=\lceil k/d \rceil a $ that if $U$ is a bounded open set (which is admissible by \autoref{opensetsareadmissible}), then $c_k(U)< \infty$.
\end{RMK}

\begin{RMK}
Finally, let us remark about the computability of $c_k$. As $H^*C^{\bZ/\ell}_T(U,\bK)$ is defined using $P_U$, which is an object in the derived category. Although it is unique in the derived category, we can take different chain representatives of $P_U$. Therefore, to compute $c_k(U)$, we can choose a particular chain representative of $P_U$. Usually, these chain representatives of $P_U$ admit properties that are not so obvious from general existence results like \autoref{Existence}, and \autoref{opensetsareadmissible}.

In Section 3, we will see how to construct a chain representative of $P_{X_\Omega}$, for a toric domain $X_\Omega$, using generating functions. The particular chain representative helps us to compute capacities for convex toric domains.
\end{RMK}

\section{Toric domains}\label{section: toric domain}
The $2$-dimensional rotation $\varphi_z(u)=\exp{(-2i\pi z)}u$ on $\bC_u$ is the Hamiltonian flow of the Hamiltonian function $H(u)=\pi|u|^2 $. Here, we identify $\bC_u$ with $T^*\bR_q$ by $u=q+ip$.

Consider the product action of single $2$-dimensional rotations given by
\[{ {z}}\cdot(u_1,\dots, u_n) =(\exp{(-2i\pi z_1)}u_1,\dots, \exp{(-2i\pi z_d)} u_d).\]
This is a Hamiltonian action of $\bR^d_{ {z}}$, which is indeed a torus action, on $\bC^d_{u}=T^*V$, where $V=\bR^d_{\bq}$ is a real vector space of dimension $d$, and $u=\bq+i\bp$. We call it the standard Hamiltonian torus action on $\bC^d_{u}=T^*V$.

The moment map of the standard Hamiltonian torus action is \begin{equation}\label{definition of the moment map of rotation}
 \mu: \bC^d_{u}=T^*V\rightarrow (\bR^d_z)^{*}=\bR^d_\zeta,\quad\mu(u_1,\dots, u_n)=(\pi|u_1|^2,\dots,\pi|u_d|^2).   
\end{equation}
\begin{Def}\label{def toric domain}For an open set $\Omega \subset \bR^d_{\zeta}$, we call $X_{\Omega}\coloneqq \mu^{-1}(\Omega)\subset T^*V$ an (open) {\em toric domain}. We say $X_\Omega$ is a {\em convex toric domain} if $|{\Omega}|\coloneqq \{{\zeta}\in \bR^d:(|\zeta_1|,\dots ,|\zeta_d|) \in \Omega\}$ is convex. We say $X_\Omega$ is a {\em concave toric domain} if $\bR^d_{{\zeta}\geq 0}\setminus \Omega$ is convex.
\end{Def}
\begin{RMK}\label{toricdomainremark}Since the moment map $\mu$ has the image $\bR^d_{ {\zeta}\geq 0}$, the toric domain $X_\Omega$ is determined by $\Omega\cap\bR^d_{ {\zeta}\geq 0}$. So we have freedom to choose suitable $\Omega$. For example, we always assume
$ -\bR^d_{ {\zeta}\geq 0} \subset\Omega.$
If $X_{\Omega}$ is a convex or a concave toric domain, one can indeed take $\Omega$ to be convex or concave (in the usual sense) and satisfying the condition $ -\bR^d_{ {\zeta}\geq 0} \subset\Omega.$ (e.g. replace $\Omega$ by $\Omega - \bR^d_{ {\zeta}\geq 0}$).
\end{RMK}
For example, we can take a non-decreasing sequence $a=(a_1,\dots,a_d)$ of positive real numbers, let $\Omega_{D(a)}= \{{\zeta}\in \bR^d_{\zeta}: \zeta_i < a_i, i\in [d]\}$ and $\Omega_{E(a)}=\{{\zeta}\in \bR^d_{\zeta}: \frac{\zeta_1}{a_1}+\cdots +\frac{\zeta_d}{a_d}<1\}$. Then
$X_{\Omega_{D(a)}}=D(a)$ is an open poly-disc and $X_{\Omega_{E(a)}}=E(a)$ is an open ellipsoid. Both are convex toric domains.
\subsection{Generating function model for microlocal kernel of Toric domains}\label{GFmodelToricdomain}

In \cite[Proposition 3.10]{chiu2017}, Chiu constructs a sheaf quantization of Hamiltonian rotation in all dimensions, particularly for the $2$-dimensional  $\varphi_z$, say 
$\mathcal{S} \in \cD(\bR_z\times \bR_{q_1}\times\bR_{q_2} )$. This quantization possesses one more property than we stated for general sheaf quantizations (see \eqref{sheafquantization}), namely
\begin{equation}\label{formula of GF sheaf quantization}
 \mathcal{S}\cong \textnormal{R}\pi_{(q_2,\dots,q_N)!}{\bK}_{\{(z,q_1,\dots,q_{N+1},t):t+\sum_{j=1}^N S_{H}(z/N,q_j,q_{j+1})\geq 0\}},   
\end{equation}
where we identify $q_{N+1}$ with $q_2$ after pushforward, $N$ is big enough so that $z/N \in (-1/4,0)\cup (0,1/4)$, and $S_H$ is the generating function of the Hamiltonian rotation: 
\begin{equation}\label{formula of S_H}
S_H(z,q,q')= \frac{q^2+{q'}^2}{2\tan(2\pi z)} -\frac{qq'}{\sin(2\pi z)}.    
\end{equation}
The formula \eqref{formula of GF sheaf quantization} is essential when computing the Chiu-Tamarkin complexes for convex toric domains.

Let
\begin{equation}\label{sheafquantization of Hamiltonian torus action}
 \mathcal{T}\coloneqq \mathcal{S}^{\boxstar d}= \textnormal{R}s_{t!}^d(\mathcal{S}^{\dboxtimes d}) \in \cD(\bR_{{z}}^d\times V_{1}\times V_{2} ),   
\end{equation}
where $ V_{i}=\bR^d_{\bq_i}$.
The microsupport estimates show that $\mathcal{T}$ is a sheaf quantization of the standard torus action in the sense of \eqref{sheafquantization}. As a corollary of \autoref{Existence}, we have 
\begin{Prop}A toric domain $X_\Omega$ is dynamically admissible by the distinguished triangle
\begin{equation}
\widehat{\mathcal{T}}\circ {\bK}_{\Omega} \rightarrow {\bK}_{\Delta_{V^2}\times \{t\geq 0\}} \rightarrow \widehat{\mathcal{T}}\circ {\bK}_{\bR^d_{{\zeta}}\setminus\Omega}\xrightarrow{+1} ,    
\end{equation}
and the pair of kernels
\begin{equation}
    P_{X_{\Omega}}\coloneqq \widehat{\mathcal{T}}\circ {\bK}_{\Omega} ,\qquad Q_{X_{\Omega}}\coloneqq \widehat{\mathcal{T}}\circ {\bK}_{\bR^d_{{\zeta}}\setminus\Omega}.
\end{equation}
\end{Prop}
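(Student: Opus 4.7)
The plan is to verify the two hypotheses of \autoref{Existence} for the standard Hamiltonian $\bR^d_z$-action on $T^*\bR^d \cong \bC^d$, with moment map $\mu$ from \eqref{definition of the moment map of rotation} and candidate quantization $\mathcal{T}$ from \eqref{sheafquantization of Hamiltonian torus action}. By \autoref{toricdomainremark} we may replace the given $\Omega \subset \bR^d_{\zeta \geq 0}$ by an open $\Omega^+ \subset \bR^d$ with $X_\Omega = \mu^{-1}(\Omega^+)$, and then read off the displayed triangle and the pair $(P_{X_\Omega}, Q_{X_\Omega})$ directly from \eqref{standard model of kernels} and \eqref{dt of admissible open set}.

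The first task is to verify that $\mathcal{T}$ satisfies the sheaf-quantization conditions \eqref{sheafquantization} for the product action. The normalization $\mathcal{T}|_{z=0} \cong \bK_{\{0\} \times \Delta_{\bR^{2d}} \times \{0\}}$ follows by restricting $\mathcal{S}^{\boxtimes d}$ to $z=0$, using that Chiu's $2$-dimensional quantization $\mathcal{S}$ has $\mathcal{S}|_{z=0} \cong \bK_{\Delta \times \{0\}}$ (a GKS normalization already built in), and then applying $\textnormal{R}s_{t!}^d$ to collapse the $d$ separate time variables. For the microsupport bound, combine the external-product and proper pushforward estimates of \autoref{functional estimate} with the individual microsupport control \eqref{microsupportofsheafquantization} for each factor $\mathcal{S}$: the $\boxtimes$ produces exactly the graph of the product Hamiltonian action, with moment value equal to the sum of the factor moments $\pi|u_i|^2$, and $\textnormal{R}s_{t!}^d$ identifies the $d$ separate $t$-coordinates while preserving that sum. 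Since each factor $\mathcal{S}$ quantizes the single $2$-dimensional rotation, the resulting microsupport is the one required in \eqref{sheafquantization} for the torus action with moment map $\mu$.

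The second task is to check that $\mu^{-1}(\zeta)$ is compact for every $\zeta \in \Omega^+$. If some $\zeta_i < 0$ the fibre is empty; otherwise $\mu^{-1}(\zeta)$ is the torus $\prod_{i=1}^d S^1_{\sqrt{\zeta_i/\pi}}$, which is compact. Both hypotheses of \autoref{Existence} being met, the proposition applies and delivers the distinguished triangle and the kernels in the exact form stated. The only step that is not purely mechanical is the microsupport bookkeeping for $\mathcal{T}$ in the first task, and I expect no genuine obstacle there beyond making the interaction of $\boxtimes$ and $\textnormal{R}s_{t!}^d$ with \eqref{microsupportofsheafquantization} explicit.
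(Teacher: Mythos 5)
Your proposal is correct and follows exactly the route the paper takes: the paper states this proposition as an immediate corollary of \autoref{Existence}, asserting (without further detail) that the microsupport estimates show $\mathcal{T}=\textnormal{R}s_{t!}^d(\mathcal{S}^{\boxtimes d})$ is a sheaf quantization of the standard torus action in the sense of \eqref{sheafquantization}, with the fibre-compactness of $\mu$ being the (easy) remaining hypothesis. Your write-up simply makes explicit the two checks the paper leaves implicit — the normalization and microsupport bookkeeping for $\mathcal{T}$, and the compactness (or emptiness) of the fibres $\mu^{-1}(\zeta)$ over $\Omega^+$ — so there is nothing further to compare.
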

This pair of microlocal kernels $(P_{X_{\Omega}}, Q_{X_{\Omega}}) $ constructed from $\mathcal{T}$ is called the {\em generating function model} of the microlocal kernels associated to toric domains.

 Actually, by the microsupport estimate of $\widehat{\mathcal{T}}$(see \eqref{fourier transform of sheaf quantization}), if $(\zeta,z,\bq,\bp,\bq',\bp',t,\tau)\in \dot{SS}(\widehat{\mathcal{T}})$ then we have $\zeta=\mu(\bq,\bp)\in \bR^d_{\zeta\geq 0}$. So, if $\zeta\notin \bR^d_{\zeta\geq 0}$ and $(\zeta,z,\bq,\bp,\bq',\bp',t,\tau)\in SS(\widehat{\mathcal{T}})$, we have $(\bp,\bp',\tau)=0$. Accordingly, for any $\zeta\notin \bR^d_{\zeta\geq 0}$, we have $SS(\widehat{\mathcal{T}}|_{(\zeta,\bq,\bq')})\subset \{\tau= 0\}$ by the microsupport estimate (\autoref{functional estimate}). So $\widehat{\mathcal{T}}|_{(\zeta,\bq,\bq')}\cong M_\bR$ is a constant sheaf over $\bR$ by \autoref{microsupported in zero section and local system} for some $M\in D(\bK-\Mod)$.
 As $\widehat{\mathcal{T}}|_{(\zeta,\bq,\bq')}\in \cD(\pt)$, and we have $\widehat{\mathcal{T}}|_{(\zeta,\bq,\bq')}\cong M_\bR\cong M_\bR\star \bK_{[0,\infty)} \cong 0 $. We conclude that $\supp(\widehat{\mathcal{T}}) \subset \bR^d_{\zeta\geq 0}$.
 
 Consequently, the kernel $ P_{X_{\Omega}}$ satisfies \begin{equation}\label{kernel toric domain only depends on the toric domain}
  P_{X_{\Omega}}\coloneqq \widehat{\mathcal{T}}\circ {\bK}_{\Omega}\cong \textnormal{R}\pi_{\zeta!}(\widehat{\mathcal{T}}\dotimes \bK_{ \Omega\times X^2\times\bR_t})\cong \textnormal{R}\pi_{\zeta!}(\widehat{\mathcal{T}}\dotimes \bK_{( \Omega\cap \bR^d_{\zeta\geq 0})\times X^2\times\bR_t}),   
 \end{equation}
which only depends on $\Omega\cap \bR^d_{\zeta\geq 0}$. So, it is the same as \autoref{toricdomainremark} that the notation $P_{X_\Omega}$ makes sense.

In general, it is complicated to compute the Fourier transform $\widehat{\mathcal{T}}$. However, with the help of associativity of composition and convolution \eqref{monoidal identities}, we have\begin{equation}\label{GFmodel convex case}
    \widehat{\mathcal{T}}\circ {\bK}_{\Omega} \cong \mathcal{T} \star \widetriangle{{\bK}_{\Omega}}.
\end{equation}When $X_\Omega$ is convex, we can take a suitable $\Omega$, which is convex in the usual sense. Then, the Fourier transform $\widetriangle{{\bK}_{\Omega}}$ is easy to compute. Actually, when $X_\Omega$ is convex, we have $\widetriangle{{\bK}_{\Omega}}\cong \bK_{\Omega^\circ }$ by a similar argument with \autoref{product}, where
\[\Omega^\circ =\{(z,t):t+\langle z,\zeta\rangle \geq 0,\, \forall\zeta \in \Omega\}.\]
The assumption $ -\bR^d_{ {\zeta}\geq 0} \subset\Omega$ shows $\Omega^\circ \subset\bR^d_{z\leq 0} \times [0,\infty)$.
Then we conclude that when $X_\Omega$ is a convex toric domain, we have
\begin{equation}
    P_{X_{\Omega}} \cong \mathcal{T}\star {\bK}_{\Omega^{\circ}}, \qquad F_{\ell}(X_{\Omega},\bK) \cong \tnR\pi_{ {z}!}{\tnR}s_{t!}^2   \left(  \mathcal{CL}_\ell(\mathcal{T}) \star  {\bK}_{\Omega^{\circ}}  \right).
\end{equation}
\begin{eg}\label{example toric domains}Let $a=(a_1,\dots,a_d)$ be a non-decreasing sequence of positive real numbers.
\begin{enumerate}[fullwidth]
\item Suppose $\Omega_{D(a)}= \{ {\zeta}: \zeta_i < a_i, i\in[d]\}$, then $X_{\Omega_{D(a)}}=D(a)$ is an open poly-disc. Let $P_{r}$ be the kernel of the open disc $\{ \pi |u|^2 < r \}$ in $\bC$, then \autoref{product} applies and $P_{D(a)}\cong P_{a_1}\boxstar\cdots \boxstar P_{a_d}$.

\item Suppose $\Omega_{E(a)}=\{ {\zeta}: \frac{\zeta_1}{a_1}+\cdots +\frac{\zeta_d}{a_d}<1\}$, then $X_{\Omega_{E(a)}}=E(a)$ is an open ellipsoid, and $\Omega_{E(a)}^\circ=\{( {z},t): t\geq -a_1z_1=\cdots=-a_dz_d\geq 0\}$. 

Let $i: \bR_z\rightarrow \bR_{  {z}}^d,\, z\mapsto (a_1z,\dots,a_dz)$, then ${\bK}_{\Omega_{E(a)}^\circ}={\textnormal{R}}(i\times \id_{\bR})_!{\bK}_{\{t\geq -z\geq 0\}}$. Therefore, we have
\[P_{E(a)}  \cong \mathcal {T}\star {\textnormal{R}}(i\times \id_{\bR})_!{\bK}_{\{t\geq -z\geq 0\}} \cong ((i\times \id_{\bR})^{-1}\mathcal{T}) \star {\bK}_{\{t\geq -z\geq 0\}} \cong \widehat{(i\times \id_{\bR})^{-1}\mathcal{T}} \circ {\bK}_{(-\infty,1)}.\]
Here we should be careful that, to obtain the second isomorphism, we need to use the explicit formula \eqref{formula of GF sheaf quantization} and \eqref{sheafquantization of Hamiltonian torus action}.

One can check directly that $(i\times \id_{\bR})^{-1}\mathcal{T}$ is the sheaf quantization of the diagonal Hamiltonian rotation $\varphi_z({u})=(\exp{ (\frac{-2i\pi z}{a_1})}u_1,\dots, \exp{ (\frac{-2i\pi z}{a_d})} u_d)$ in the sense of \eqref{sheafquantization}. In particular, when $a_1=\dots=a_d=\pi R^2>0$, the construction is the same as Chiu's for balls.
\end{enumerate}
\end{eg}
\begin{RMK}\label{concave}
For the concave toric domain case, the Fourier transform $\widetriangle{{\bK}_{\Omega}}$ is not as simple as the convex case (which is a complex only concentrated in degree 0). Actually, $\widetriangle{{\bK}_{\Omega}}$ is represented by a complex of sheaves concentrated in cohomological degree $[0,d]$. Accordingly, the results in the next subsection cannot generalize directly to the concave situation. However, some
manual computations of capacities for concave toric domains are still as predicted in
\autoref{conjecture}.

For toric domains neither convex nor concave, the first example we can consider is an open annulus bounded by two concentric spheres. Then we can take $\Omega=\{x\in \bR^d: a<\sum x_i <A\}$. In this case, when $T\geq 0$, we can only extract numerical information about the exterior sphere from $\widetriangle{{\bK}_{\Omega}}$. Then we cannot know numerical information for the interior ball. Maybe it is a feature of the present definition of capacities, we expect more understanding to overcome this defect.
\end{RMK}

\subsection{Chiu-Tamarkin complex and Capacities of Convex Toric Domains}In this subsection, we focus on convex toric domains, that is, $X_{\Omega}=\mu^{-1}(\Omega)$, where $\Omega \subset \bR^d$ is an open set such that $\{(\zeta_1,\zeta_d)\in \bR^d:(|\zeta_1|,\dots ,|\zeta_d|) \in \Omega\}$ is convex. As we discussed in \autoref{toricdomainremark}, we could take a convex $\Omega$ such that $\bR^d_{ {\zeta}\leq 0} \subset\Omega$. The identity \eqref{kernel toric domain only depends on the toric domain} shows that such a choice of $\Omega$ does not affect the computation of Chiu-Tamarkin complex for $X_{\Omega}$ and we will see this feature again in \autoref{rmkgammatopology}.

One can verify that, under such conditions, the polar cone satisfies $\{O\}\times \bR_{\geq 0}\subset \Omega^\circ \subset \bR^d_{\leq 0}\times \bR_{\geq 0}$, where $O \in \bR^d$ is the origin. For  $T\geq 0$, we set
\begin{align*}
  &{\Omega}^\circ_T \coloneqq {\Omega}^\circ\cap \{t=T\}=\{z\in \bR^d: T+\langle z, \zeta \rangle \geq 0, \forall \zeta \in \Omega\}.
\end{align*}

We also define the function $
  I(z)=\sum_{i=1}^d \big\lfloor {-z_i} \big\rfloor,\,z\in \bR^d.
$
For a subset $\Sigma\subset \bR^d$, we define
\begin{equation}\label{definition of I and infty}
   \|\Sigma\|_{\infty}=\max_{z\in \Sigma}\|z\|_{\infty} \quad \text{ and }
 I(\Sigma)=\max_{z\in\Sigma}I(z).
\end{equation}
Then we have $\|\Omega^\circ_T\|_{\infty}=T\|\Omega^\circ_1\|_{\infty}$ for $T\geq 0$.

For $x,y\in \bR^d$, the segment $\overline{xy}$ is defined as $\{tx+(1-t)y: t\in [0,1]\}$.

\begin{Thm}\label{structure toric domain module}Let $X_\Omega \subset  T^*V$ be a convex toric domain and $\ell\in \bN_{\geq 2}$. If $0\leq T <p_\ell /\|\Omega^\circ_1\|_{\infty}$, we have

\begin{itemize}[fullwidth]
    
\item For each $Z\in \Omega^\circ_T$, the inclusion of the segment $\overline{OZ} \subset \Omega^\circ_T$ induces a decomposition of the fundamental class $\eta^{\bZ/\ell}_T(X_{\Omega},\bF_{p_\ell})=u^{I(Z)}\Lambda_{Z,\ell}$ for a non-torsion element $\Lambda_{Z,\ell}\in H^{-2I(Z)}C^{\bZ/\ell}_T(X_\Omega,\bF_{p_\ell})$. In particular, $\eta^{\bZ/\ell}_T(X_{\Omega},\bF_{p_\ell})$ is non-zero.

\item The minimal cohomology degree of $H^*C^{\bZ/\ell}_T(X_\Omega,\bF_{p_\ell})$ is exactly $-2I(\Omega^\circ_T)$, i.e.,
    \[ H^*C^{\bZ/\ell}_T(X_\Omega,\bF_{p_\ell})\cong  H^{\geq -2I(\Omega^\circ_T)}C^{\bZ/\ell}_T(X_\Omega,\bF_{p_\ell}), \] 
    and
\[ H^{-2I(\Omega^\circ_T)}C^{\bZ/\ell}_T(X_\Omega,\bF_{p_\ell})\neq 0 . \] 
\item $H^*C^{\bZ/\ell}_T(X_\Omega,\bF_{p_\ell})$ is a finitely generated $\bF_{p_\ell}[u]$-module. The free part is isomorphic to $A=\bF_{p_\ell}[u,\theta]$, so $H^*C^{\bZ/\ell}_T(X_\Omega,\bF_{p_\ell})$ is of rank $2$ over $\bF_{p_\ell}[u]$. 

The torsion part is located in cohomology degree $[-2I(\Omega^\circ_T),-1]$.  $H^*C^{\bZ/\ell}_T(X_\Omega,\bF_{p_\ell})$ is torsion free when $X_\Omega$ is an open ellipsoid.\end{itemize}
\end{Thm}

Before proving \autoref{structure toric domain module}, let us use it to compute the capacities $c_k(X_\Omega)$.
\begin{Thm}\label{computation of capacities of convex toric domian}For a convex toric domain $X_\Omega \subsetneqq T^*V$, we have
\[c_k(X_\Omega)=\inf\left\{T\geq 0:\exists  {z}\in \Omega_T^\circ, I( {z})\geq k\right\}.\]
\end{Thm}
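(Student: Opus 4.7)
The plan is to match $c_k(X_\Omega)$ against the quantity $T_k := \inf\{T\geq 0 : \exists z\in\Omega_T^\circ,\ I(z)\geq k\}$ by proving $c_k(X_\Omega)\leq T_k$ and $c_k(X_\Omega)\geq T_k$ separately, using the structural theorem (\autoref{structure toric domian module}) as a black box. The key observation is that $u\in A$ has degree $2$, so $\eta_{\ell,T}(X_\Omega,\bF_{p_\ell}) = u^k \gamma_\ell$ forces $\gamma_\ell$ to live in cohomological degree $-2k$, and the structural theorem controls exactly which negative degrees are nonzero.

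For the upper bound, I would fix any $T>T_k$ and pick $z\in\Omega_T^\circ$ with $I(z)\geq k$. To invoke the structural theorem I need $0\leq T < p_\ell/\|\Omega_1^\circ\|_\infty$; this is achieved by choosing $\ell_0\in\bO$ whose minimal prime factor $p_{\ell_0}$ exceeds $T\|\Omega_1^\circ\|_\infty$, so for every $\ell\in\bO$ with $p_\ell\geq p_{\ell_0}$ the theorem applies. The third bullet of \autoref{structure toric domian module} then decomposes $\eta_{\ell,T}(X_\Omega,\bF_{p_\ell})=u^{I(z)}\gamma_{z,\ell}$, so setting $\gamma_\ell := u^{I(z)-k}\gamma_{z,\ell}$ gives $\eta_{\ell,T}=u^k\gamma_\ell$. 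Hence $T\in\textnormal{Spec}(X_\Omega,k)$ for every $T>T_k$, which yields $c_k(X_\Omega)\leq T_k$.

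For the lower bound, I would take any $T<T_k$ and show $T\notin\textnormal{Spec}(X_\Omega,k)$. By definition of $T_k$, every $z\in\Omega_T^\circ$ satisfies $I(z)<k$, so $I(\Omega_T^\circ)\leq k-1$. Given any candidate $\ell_0$, choose $\ell\in\bO$ with $p_\ell\geq p_{\ell_0}$ and also $p_\ell>T\|\Omega_1^\circ\|_\infty$ (e.g.\ an odd prime bigger than both thresholds). The first bullet of \autoref{structure toric domian module} then gives $H^j(C_{T,\ell}(X_\Omega,\bF_{p_\ell}))=0$ for $j<-2I(\Omega_T^\circ)\geq -2(k-1)$; in particular $H^{-2k}=0$, so no $\gamma_\ell$ of the required degree can exist. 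Thus the defining condition of $\textnormal{Spec}(X_\Omega,k)$ fails for every choice of $\ell_0$, i.e.\ $T\notin\textnormal{Spec}(X_\Omega,k)$, and $c_k(X_\Omega)\geq T_k$.

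There is essentially no hard step once the structural theorem is in hand; the proof is a degree-counting argument combined with careful management of the quantifiers in \autoref{definition of capacities}. The only mild subtlety — and the place where one should be careful — is the interplay between the $\forall\ell,\, p_\ell\geq p_{\ell_0}$ clause in the definition of $\textnormal{Spec}$ and the hypothesis $T<p_\ell/\|\Omega_1^\circ\|_\infty$ needed to apply \autoref{structure toric domian module}. In both directions one resolves this by observing that for any fixed $T$ one may freely enlarge $p_{\ell_0}$ to exceed $T\|\Omega_1^\circ\|_\infty$, so the two conditions are compatible for all admissible $\ell$.
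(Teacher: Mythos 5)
Your proposal is correct and follows essentially the same route as the paper: the upper bound via the decomposition $\eta_{\ell,T}=u^{I(Z)}\gamma_{Z,\ell}$ from the third bullet of \autoref{structure toric domian module}, and the lower bound via degree counting against the concentration range $[-2I(\Omega_T^\circ),\infty)$, with the same resolution of the quantifier interplay by enlarging $p_{\ell_0}$ past $T\|\Omega_1^\circ\|_\infty$. The only thing worth making explicit in the lower bound is that the degree-forcing argument ($\gamma_\ell$ must contribute in degree $-2k$) relies on $\eta_{\ell,T}(X_\Omega,\bF_{p_\ell})\neq 0$, which the structural theorem supplies under the same hypothesis $T<p_\ell/\|\Omega_1^\circ\|_\infty$; otherwise $\gamma_\ell=0$ would trivially satisfy the Spec condition.
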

\begin{proof}
Let $S=\left\{T\geq 0:\exists  {z}\in \Omega_T^\circ, I( {z})\geq k\right\}$, $L=\inf(S)$.

For $T\in S$, there is $ {Z}\in \Omega_T^\circ$ such that $I( {Z})= k$. Consider the closed inclusion of the segment $\overline{OZ} \subset \Omega_T^\circ$. We choose a prime $p$ with $ p >T\|\Omega^\circ_1\|_{\infty}$. Then for all $\ell\in \bO$ with $p_\ell \geq p $, we have $p_\ell > T\|\Omega^\circ_1\|_{\infty}$, and \autoref{structure toric domain module} shows that the closed inclusion induces a decomposition $\eta^{\bZ/\ell}_T(X_\Omega,\bF_{p_\ell})=u^k\Lambda_{Z,\ell}$. So $T\in \textnormal{Spec}(X_{\Omega},k)$, and $L\geq c_k(X_{\Omega})$.

Conversely, if $T\in \textnormal{Spec}(X_{\Omega},k)$, there is a prime $p$ such that for all $\ell\in \bO$ with $p_\ell \geq p$ there is a $\Lambda_\ell \in H^*C^{\bZ/\ell}_T(X_{\Omega},\bF_{p_\ell})$ such that $\eta^{\bZ/\ell}_T(X_\Omega,\bF_{p_\ell})=u^k\Lambda_\ell$. Now, we can take a prime $\ell=p_\ell>p$ big enough such that $T<\ell /\|\Omega^\circ_1\|_{\infty}$, then $\eta^{\bZ/\ell}_T(X_\Omega,\bF_{p_\ell})$ and $\Lambda_\ell$ are non-zero. Hence, we have an equation of degree: $0=|\eta^{\bZ/\ell}_T(X_\Omega,\bF_{p_\ell})|=2k+|\Lambda_\ell|$, which shows that $2k=-|\Lambda_\ell|$. Therefore, \autoref{structure toric domain module} shows $2k=-|\Lambda_\ell| \leq 2I(\Omega^\circ_T)$. Hence $T\in S$, and $c_k(X_{\Omega}) \geq L$.
\end{proof}

Here, we test the result by the example of ellipsoids. They are all direct corollaries of \autoref{structure toric domain module} and \autoref{computation of capacities of convex toric domian}.
\begin{Coro}\label{nonzeroellisoid}Let $X_{\Omega}=E=E(a_1,\dots,a_d)$ be an ellipsoid and $\ell\in\bO$. For $0\leq T <p_\ell a_1$, set $Z(a)=(-T/a_1,\dots, -T/a_d)$. We have $H^*C^{\bZ/\ell}_T(E,\bF_{p_\ell})\cong u^{-I(Z(a))}\bF_{p_\ell}[u,\theta]$, the fundamental class is non-zero in all cases, and $c_k(E)=\min\{T\geq 0: \sum_{i=1}^d\lfloor {T/a_i} \rfloor\geq k\}$. In particular, $c_k(B_a)=\lceil k/d \rceil a$.
\end{Coro}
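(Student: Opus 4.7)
The plan is to derive all three (resp.\ four) assertions by specializing \autoref{structure toric domian module} and \autoref{computation of capacities of convex toric domian} to the ellipsoid, once the polar cone $\Omega_E^\circ$ and its associated invariants are read off explicitly.

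The first step is to identify $\Omega_E^\circ$. Using the convex extension $\Omega_E^+ = \{\zeta \in \bR^d : \sum_i \zeta_i/a_i < 1\}$, the support function of this half-space forces
\[\Omega_E^\circ = \{(z,t) : t \geq -a_1 z_1 = \cdots = -a_d z_d \geq 0\},\]
already recorded in the generating-function example preceding this subsection. Its slice at height $T$ is the segment $z_i = -\lambda/a_i$, $0 \leq \lambda \leq T$. From this I read off $\|\Omega_{E,1}^\circ\|_\infty = 1/a_1$, so the hypothesis $T < p_\ell a_1$ coincides with $T < p_\ell/\|\Omega_{E,1}^\circ\|_\infty$ and activates \autoref{structure toric domian module}. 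Since $\lambda \mapsto \sum_i \lfloor \lambda/a_i \rfloor$ is non-decreasing, its maximum over $[0,T]$ is attained at $\lambda = T$, giving $I(\Omega_E^\circ\cap\{t=T\}) = \sum_i \lfloor T/a_i \rfloor = I(T/\underline{a})$.

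The second step is to feed these into the two cited theorems. \autoref{structure toric domian module} states that for ellipsoids, $H^*(C_{T,\ell}(E,\bF_{p_\ell}))$ is torsion-free of rank two over $\bF_{p_\ell}[u]$ with free part $\cong A = \bF_{p_\ell}[u,\theta]$ and minimal cohomological degree $-2I(T/\underline{a})$; a torsion-free rank-two $\bF_{p_\ell}[u]$-module with these invariants is forced to be the shift $u^{-I(T/\underline{a})}A$. Non-vanishing of $\eta_{\ell,T}(E,\bF_{p_\ell})$ is the last bullet of the same theorem applied to $Z = 0 \in \Omega_T^\circ$, which produces a non-torsion representative of degree zero. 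Plugging the identification $I(\Omega_T^\circ) = \sum_i \lfloor T/a_i \rfloor$ into \autoref{computation of capacities of convex toric domian} yields
\[c_k(E) = \inf\bigl\{T \geq 0 : \textstyle\sum_i \lfloor T/a_i \rfloor \geq k\bigr\},\]
and since $T \mapsto \sum_i \lfloor T/a_i \rfloor$ is an integer-valued, non-decreasing, right-continuous step function, the infimum is attained and equals the minimum. Specializing to $a_1 = \cdots = a_d = a$ reduces the condition to $d\lfloor T/a \rfloor \geq k$, first met at $T = \lceil k/d \rceil a$.

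The main obstacle, if it deserves the name, is the polar-cone computation and the identification of $I$ on the segment $\Omega_E^\circ\cap\{t=T\}$; both have essentially been carried out already in the paper's ellipsoid example, so the corollary is really just an unpacking of the two cited theorems.
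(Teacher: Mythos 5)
Your proposal is correct and takes essentially the same route as the paper, which simply observes that all statements follow from \autoref{structure toric domian module} (together with \autoref{computation of capacities of convex toric domian} for the capacity formula); your explicit identification of $\Omega_E^\circ$, $\|\Omega^\circ_{E,1}\|_\infty=1/a_1$ and $I(\Omega^\circ_T)=\sum_i\lfloor T/a_i\rfloor$ is exactly the intended unpacking. The only added value is your careful remark that right-continuity of $T\mapsto\sum_i\lfloor T/a_i\rfloor$ turns the infimum into a minimum, which the paper leaves implicit.
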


\subsection{Cohomology sheaf \texorpdfstring{$\mathcal{CL}_\ell(\mathcal{T})$}{} for the standard torus action}\label{Cohomology sheaf section}Recall the results of \autoref{GFmodelToricdomain}, and discussions in \autoref{Discrete Hamiltonian loop section}. It is necessary to study the cohomology sheaf $\mathcal{CL}_\ell(\mathcal{T})$ carefully. Recall that $\mathcal{T}=\mathcal{S}^{\boxstar d}= {\textnormal{R}}s_{t!}^d(\mathcal{S}^{\dboxtimes d})$, where $\mathcal{S}$ is the sheaf quantization of Hamiltonian rotation in dimension $2$. Using the K\"unneth formula and \autoref{Discrete Hamiltonian loop}, we have
\begin{align*}
    \mathcal{CL}_\ell(\mathcal{T}) &\cong {\textnormal{R}}s_{\underline{z}*}^\ell\left( \left(  (s_{ z_j}^{\ell})^{-1}\mathcal{CL}_\ell(\mathcal{S})\right)^{\boxstar d}\right)
    \cong {\textnormal{R}}s_{\underline{z}*}^\ell\left(  (s_{ \underline{z}}^{\ell})^{-1}\left(  \mathcal{CL}_\ell(\mathcal{S})\right)^{\boxstar d}\right)\\
    &\cong {\textnormal{R}}s_{\underline{z}*}^\ell (s_{ \underline{z}}^{\ell})^{-1}\left( \left(  \mathcal{CL}_\ell(\mathcal{S})\right)^{\boxstar d}\right)
    \cong {\textnormal{R}}s_{t!}^d\left(  \mathcal{CL}_\ell(\mathcal{S})\right)^{\boxstar d},
\end{align*}
where $\underline{z}=(z_1,\dots,z_d)$. 
Moreover, an explicit formula for  $\mathcal{CL}_\ell(\mathcal{S})$ is obtained by Chiu:
\begin{Prop}\textnormal{(\cite[Formula (38)]{chiu2017})}\label{chiu result on loop sheaf} For all fields $\bK$, there exists a (unique) sheaf ${\mathcal{E}_{\ell}}\in D_{\bZ/\ell}(\bR_z)$ such that we have  an isomorphism in $D_{\bZ/\ell}(\bR_z\times \bR_t)$
\begin{equation}\label{loop sheaf of GF model of rotation}
 \mathcal{CL}_\ell(\mathcal{S}) \cong {{\mathcal{E}_{\ell}}} \dboxtimes {\bK}_{[0,\infty)}.   
\end{equation}
Moreover, for any $N \in \bN$, 
\begin{equation}\label{E sheaf of GF model of rotation}
\left.{\mathcal{E}_{\ell}}\right|_{(-N\ell /4,0)}\cong {\textnormal{R}}\pi_{\underline{q}!}{\bK}_{\mathcal{W}^{N}_{\ell}},    
\end{equation}
with $\underline{q}=(q_1,\dots,q_{N\ell})$,
\[\mathcal{W}^{N}_{\ell}=\{(z,q_1,\dots,q_{N\ell})\in (-N\ell /4,0) \times \bR^{N\ell }:\sum_{k\in \mathbb{Z}/N\ell} S_H(z/N\ell,q_k,q_{k+1}) \geq 0    \},\]
and 
\[S_H(z,q_k,q_{k+1})= \frac{q_k^2+q_{k+1}^2}{2\tan(2\pi z)} -\frac{q_k q_{k+1}}{\sin(2\pi z)}. \]
The ${\bZ/\ell}$-action on ${\mathcal{E}_{\ell}}$ is induced by the linear action $(q_k)\mapsto (q_{k-N})$ of ${\bZ/\ell}$ on $\bR^{N\ell}$, and ${\bZ/\ell}$ acts trivially on $\bR_z\times \bR_t$. 
\end{Prop}
A disadvantage for the formula \eqref{E sheaf of GF model of rotation} is that we don't know if the isomorphism can be extended to $z=0$ since the right hand side is not defined for $z=0$. Such an extension is necessary for our later computation. So, let us start from an extension of the isomorphism \eqref{E sheaf of GF model of rotation} to $z=0$. Notice that $\sin(2\pi z/N\ell)<0$ for $z/N\ell\in (-1/4,0)$. One can rewrite $\mathcal{W}^{N}_{\ell}$ as follows:
\begin{equation*}\label{original definition of W}
\mathcal{W}^{N}_{\ell}=\left\{(z,q_1,\dots,q_{N\ell})\in (-N\ell/4,0) \times \bR^{N\ell}:\cos(2\pi z/N\ell)\sum_{k\in \mathbb{Z}/N\ell} q_k^2 \leq \sum_{k\in \mathbb{Z}/N\ell} q_k q_{k+1}\right\}.\end{equation*}
Let us define \begin{equation}\label{definition of Q}
Q(z,q_1,\dots,q_{N\ell})\coloneqq  \sum_{k\in \mathbb{Z}/N\ell\mathbb{Z}} \left(q_k q_{k+1}-\cos(2\pi z/N\ell)q_k^2   \right) .
\end{equation}
Since $Q(0,q_1,\dots,q_{N\ell})$ is well defined, we can extend the definition of $\mathcal{W}^{N}_{\ell}$ (using the same notation) to
\begin{equation}\label{definition of W}
\mathcal{W}^{N}_{\ell}=\{(z,q_1,\dots,q_{N\ell})\in (-N\ell/4,0] \times \bR^{N\ell}:Q(z,q_1,\dots,q_{N\ell})\geq 0\}.
\end{equation}
For our convenience, we also set, for $z\in (-N\ell/4,0]$,
\begin{equation}\label{definition of W(z)}
\mathcal{W}^{N}_{\ell}(z)=\{(q_1,\dots,q_{N\ell})\in  \bR^{N\ell}:Q(z,q_1,\dots,q_{N\ell})\geq 0\}.
\end{equation}
The ${\bZ/\ell}$-action on the extension is the same as the original one.

Now take $\mathcal{E}'_\ell\coloneqq {\textnormal{R}}\pi_{\underline{q}!}i_!{\bK}_{\mathcal{W}^{N}_{\ell}} \in D_{\bZ/\ell}((-N\ell/4,+\infty))$, where $i:(-N\ell/4,0]\times \bR^{N\ell}  \hookrightarrow (-N\ell/4,+\infty)\times \bR^{N\ell}$ is the closed inclusion.

By the fundamental inequality, we have $\sum_{k} q_k^2 \geq \sum_{k} q_k q_{k+1}$, and it takes equality when $q_1=\cdots=q_{N\ell}$. So  \[\mathcal{W}^{N}_\ell(0)=\{(q_1,\dots,q_{N\ell})\in  \bR^{N\ell}:q_1=\cdots=q_{N\ell}\}=\Delta_{\bR^{N\ell}}.\]
Then we have that $(\mathcal{E}'_\ell)_0=\textnormal{R}\Gamma_c(\mathcal{W}^{N}_\ell(0),{\bK}_{\Delta_{\bR^{\ell}}})\cong \textnormal{R}\Gamma_c(\Delta_{\bR^{\ell}},{\bK}_{\Delta_{\bR^{\ell}}})$.

On the other hand, one can check that $\mathcal{CL}_\ell(\mathcal{S})|_{\{z=0\}}=\textnormal{R}\Gamma_c(\Delta_{\bR^{\ell}},{\bK}_{\Delta_{\bR^{\ell}}})\dboxtimes {\bK}_{\{t\geq 0\}}$ by definition of $\mathcal{CL}_\ell(\mathcal{S})$ since $\mathcal{S}|_{\{z=0\}}={\bK}_{\Delta_{\bR^{2}}}\dboxtimes {\bK}_{\{t\geq 0\}}$. Therefore, we have $\mathcal{CL}_\ell(\mathcal{S})|_{\{z=0\}}=(\mathcal{E}'_\ell)_0\dboxtimes {\bK}_{\{t\geq 0\}}$. 

However, stalk-wise isomorphism is not necessary extend to a global one in general. So, we need the following prove to obtain a global extension of the isomorphism \eqref{E sheaf of GF model of rotation}.
\begin{Lemma}\label{loop sheaf of GF model of rotation 2}We have an equivariant isomorphism
\[ \mathcal{E}_\ell|_{(-N\ell/4,0]}\cong\mathcal{E}'_\ell|_{(-N\ell/4,0]}={\textnormal{R}}\pi_{\underline{q}!}{\bK}_{\mathcal{W}^{N}_{\ell}}.\]
\end{Lemma}
\begin{proof}Using \autoref{functional estimate} and \autoref{non proper pushforward estimate}, one can show that $SS(\mathcal{E}_\ell), SS(\left(\mathcal{E}_\ell\right)_{(-N\ell/4,0]}) \subset \{\zeta \leq 0\}$. Now, consider the distinguished triangle
\[\textnormal{R}\Gamma_{[0,\infty)}(\left(\mathcal{E}_\ell\right)_{(-N\ell/4,0]}) \rightarrow \left(\mathcal{E}_\ell\right)_{(-N\ell/4,0]} \rightarrow \textnormal{R}\Gamma_{(-N\ell/4,0)}(\left(\mathcal{E}_\ell\right)_{(-N\ell/4,0]})\xrightarrow{+1}.\]

By definition, we have $\supp(\textnormal{R}\Gamma_{[0,\infty)}(\left(\mathcal{E}_\ell\right)_{(-N\ell/4,0]}))\subset \{0\}$. On $(-N\ell/4,+\infty)$, the closed set $[0,\infty)$ is defined by the function $f(z)=z$ and $\{f(z) \geq 0\}$. Therefore, by definition of microsupport, $(\textnormal{R}\Gamma_{\{z\geq 0\}}(\left(\mathcal{E}_\ell\right)_{(-N\ell/4,0]}))_0\cong 0$ since $df_0=(0,1)\notin SS(\left(\mathcal{E}_\ell\right)_{(-N\ell/4,0]})$. So we have $(\textnormal{R}\Gamma_{\{z\geq 0\}}(\left(\mathcal{E}_\ell\right)_{(-N\ell/4,0]}))_0\cong 0$ and we have an isomorphism $\left(\mathcal{E}_\ell\right)_{(-N\ell/4,0]}\cong \textnormal{R}\Gamma_{(-N\ell/4,0)}(\left(\mathcal{E}_\ell\right)_{(-N\ell/4,0]})$. This isomorphism holds in the equivariant category since the corresponding morphism is an equivariant morphism.

The argument is purely microlocal, so we also have $\left(\mathcal{E}'_\ell\right)_{(-N\ell/4,0]} \cong \textnormal{R}\Gamma_{(-N\ell/4,0)}(\left(\mathcal{E}'_\ell\right)_{(-N\ell/4,0]})$.

On the other hand, the isomorphism \eqref{E sheaf of GF model of rotation} and our discussion on $\mathcal{W}^{N}_\ell$ show that $j^{-1}(\left(\mathcal{E}_\ell\right)_{(-N\ell/4,0]}) \cong j^{-1}(\left(\mathcal{E}'_\ell\right)_{(-N\ell/4,0]})$ where $j$ is the open inclusion $(-N\ell/4,0) \hookrightarrow (-N\ell/4,\infty)$. Therefore, the natural isomorphism $\textnormal{R}j_{*}j^{-1}\cong \textnormal{R}\Gamma_{(-N\ell/4,0)}$ shows that
\[\left(\mathcal{E}_\ell\right)_{(-N\ell/4,0]} \cong \textnormal{R}j_{*}j^{-1}(\left(\mathcal{E}_\ell\right)_{(-N\ell/4,0]})\cong \textnormal{R}j_{*}j^{-1}(\left(\mathcal{E}'_\ell\right)_{(-N\ell/4,0]}) \cong \left(\mathcal{E}'_\ell\right)_{(-N\ell/4,0]}.\]
Finally, we conclude by restricting the isomorphism to $(-N\ell/4,0]$ and the definition of $\mathcal{E}'_\ell$.
\end{proof}

{\bf Topology of $\mathcal{W}^N_\ell(z)$: }We know that $(\mathcal{E}_\ell)_{z}\cong\textnormal{R}\Gamma_c(\mathcal{W}^N_\ell(z),\bK)$ if $ -N\ell/4<z \leq 0$ (\autoref{loop sheaf of GF model of rotation 2}). For a fixed $z \in (-N\ell/4,0]$, the function $Q_z(q_1,\dots,q_{N\ell})=Q(z,q_1,\dots,q_{N\ell})$ is a quadratic form by \eqref{definition of Q}. Therefore, it is easy to study the topology of $\mathcal{W}^{N}_{\ell}(z)=\{(q_1,\dots,q_{N\ell})\in\bR^{N\ell}:Q_z\geq 0\}$. The matrix of $Q_z$ in the standard basis is a circulant matrix
\begin{center}
$A_z=\left(\begin{matrix}
     -\cos(\frac{2\pi z}{N\ell})       & \frac{1}{2}       & 0 &\cdots     & \frac{1}{2} \\
     \frac{1}{2} & -\cos(\frac{2\pi z}{N\ell})       & \frac{1}{2} &\cdots     & 0 \\
     0 & \frac{1}{2} & -\cos(\frac{2\pi z}{N\ell}) &\cdots     & 0 \\
      \vdots & \vdots &\vdots &                 & \vdots \\
      \frac{1}{2}       & 0       & 0 &\cdots     & -\cos(\frac{2\pi z}{N\ell}) 
\end{matrix}\right).$
\end{center}
So one can diagonalize $A_z$ unitarily using the discrete Fourier transform \[(\omega^{(i-1)(j-1)})_{i,j=\bZ/N\ell},\]
where $\omega$ is a primitive $N\ell^{th}$ root of unity. Therefore, the eigenvalues of $A_z$ are
\begin{equation}\label{eigenvalue of $A_z$}
\lambda_k(z)=\text{Re}\left(\exp\left(\frac{2\pi k \sqrt{-1}}{N\ell}\right)\right)-\cos\left(\frac{2\pi z}{N\ell}\right)=\cos\left(\frac{2\pi k}{N\ell}\right)-\cos\left(\frac{2\pi z}{N\ell}\right),    
\end{equation}
where $k\in \mathbb{Z}/N\ell$.

We always have $\lambda_0(z)=1-\cos\left(\frac{2\pi z}{N\ell}\right)\geq 0$. It is direct to see that $\lambda_{k}(z)=\lambda_{N\ell-k}(z)$ for $k=1,\dots,N\ell-1$. So, for $k\geq 1$, we need to consider two situations:
\begin{enumerate}[fullwidth, label={(\alph*)}]
\item If $N\ell$ is odd. For $k=1,\dots,(N\ell-1)/2$, $\lambda_{k}(z)\geq 0$ if $k\leq \big\lfloor {-z} \big\rfloor$. Therefore, in this case, $A_z$ admits $\#\{k\in \mathbb{Z}/N\ell: \lambda_k  \geq 0\}=1+2\big\lfloor {-z} \big\rfloor$ non-negative eigenvalues.

\item If $N\ell$ is even. The eigenvalue $\lambda_{N\ell/2}(z)=-1-\cos\left(\frac{2\pi z}{N\ell}\right)<0$ since $z>-N\ell/4$. For $k=1,\dots,(N\ell/2)-1$, $\lambda_{k}(z)\geq 0$ if $k\leq \big\lfloor {-z} \big\rfloor$. Therefore, in this case, $A_z$ also admits $\#\{k\in \mathbb{Z}/N\ell: \lambda_k  \geq 0\}=1+2\big\lfloor {-z} \big\rfloor$ non-negative eigenvalues.
\end{enumerate}

In any case, we have that $A_z$ admits $\#\{k\in \mathbb{Z}/N\ell: \lambda_k  \geq 0\}=1+2\big\lfloor {-z} \big\rfloor$ non-negative eigenvalues.

Therefore, $\mathcal{W}^N_\ell(z)=\{ Q_z\geq 0\}$ is a quadratic cone of index $1+2\big\lfloor {-z} \big\rfloor$. In particular, $\mathcal{W}^N_\ell(z)=\{ Q_z\geq 0\}$ is properly homotopic to a vector space $\bR^{1+2\big\lfloor {-z}\big\rfloor}$.

Now we can describe the non-equivariant structure of $\mathcal{E}_\ell|_{(-\infty,0]}$. Here, we forget its equivariant structure and use the same notation $\mathcal{E}_\ell|_{(-\infty,0]}$. In particular,  $\mathcal{E}_\ell|_{(-\infty,0]}\cong  \mathcal{E}_1|_{(-\infty,0]}$ non-equivariantly.  Consider $\pi_{\underline{q}}: \mathcal{W}_{\ell}^N \rightarrow (-N\ell/4,0]$ for $N\ell$ big enough, it restricts to a proper homotopical fiber bundle with fiber $\bR^{1+2n}$ over each interval $(-n-1,-n]$ for $n\in \bN_{\geq 0}$, and $n+1<N \ell/4$. Therefore, we conclude that ${\mathcal{E}_\ell}|_{(-n-1,-n]} \cong \bK_{(-n-1,-n]}[-1-2n]$. On the other hand, in the non-equivariant derived category, $\bK_{(x,y]}$ and $\bK_{(z,w]}[2]$ has no non-trivial extension if $\bK$ is a field. Therefore, $(\mathcal{E}_\ell)_{(-n-1,-n]}$ has no non-trivial extension for different $n$. In conclusion, we have
\begin{Prop}\label{non-equivariant decomposition}For all fields $\bK$ and for all $\ell\in \bN$, we have the decomposition in the non-equivariant derived category $D( (-\infty,0])$:
\[\mathcal{E}_\ell|_{(-\infty,0]}\cong \bigoplus_{n\in \bN_{\geq 0}}\bK_{(-n-1,-n]}[-1-2n].\]
\end{Prop}

To describe the ${\bZ/\ell}$-action on $\mathcal{W}^N_\ell(z)$, it is better to consider the diagonal form of $Q_z$.

Let $x_k= (q_1,\dots,q_{N\ell})(1,\omega^k,\omega^{2k},\dots,\omega^{(N\ell-1)k})^t\in \bC$, $k\in \bZ/N\ell$. They are coordinates after diagonalization using the discrete Fourier transform. As $\omega$ is a root of unity, we have that $x_k=\overline{x_{N\ell-k}}$. In particular, $x_0$ is a real number. Also recall that $\lambda_{k}(z)=\lambda_{N\ell-k}(z)$. Then the diagonal form of $Q_z$ is
\begin{align}\label{diagonal form definition}
\begin{aligned}
&Q_z(x_0,x_1,\dots,x_{N\ell-1})=\lambda_0(z)x_0^2+\sum_{k=1}^{N\ell-1} \lambda_k(z)|x_k|^2,\\ &(x_0,x_1,\dots,x_{N\ell-1})\in \bR\times \bC^{{N\ell-1}}. \end{aligned} \end{align}
Notice that the discrete Fourier transform that we applied is a complex linear transform, it is easier to work in complex coordinates. However, the constrains $x_k=\overline{x_{N\ell-k}}$ shows that actually we only have half independent complex coordinates, so the real dimension here is still $N\ell$. To our convenience in formulating the action, we still use the complex coordinates. We also need to discuss parity of $N\ell$. Since $N$ is chosen arbitrarily, we can always assume $N$ is odd. Then the parity of $N\ell$ is the parity of $\ell$.

If $\ell$ is odd, then the diagonal form is 
\begin{align}\label{diagonal form of Q}
\begin{aligned}
&Q_z(x_0,x_1,\dots,x_{(N\ell-1)/2})=\lambda_0(z)x_0^2+2\sum_{k=1}^{(N\ell-1)/2} \lambda_k(z)|x_k|^2,\\ &(x_0,x_1,\dots,x_{(N\ell-1)/2})\in \bR\times \bC^{({N\ell-1)/2}}\cong \bR^{N\ell}.         
\end{aligned}
\end{align}
If $\ell$ is even, then the diagonal form is
\begin{align}\label{diagonal form of Q odd}
\begin{aligned}
&Q_z(x_0,x_1,\dots,x_{N\ell/2-1},x_{N\ell/2})=\lambda_0(z)x_0^2+2\sum_{k=1}^{N\ell/2-1} \lambda_k(z)|x_k|^2+\lambda_{N\ell/2}(z)|x_{N\ell/2}|^2,\\ &(x_0,x_1,\dots,x_{N\ell/2-1},x_{N\ell/2})\in \bR\times \bC^{{N\ell/2}-1} \times \bR \cong \bR^{N\ell}.         
\end{aligned}
\end{align}
Now, the action is easier to describe under the diagonal form. By definition of $x_k$, we have 
$x_k=\sum_{i\in \bZ/N\ell} q_{i+1}\omega^{ik}.$
The $\bZ/\ell$-action is given by $(q_i)\mapsto (q_{i-N})$. Then we have
\[x_k=\sum_{i\in \bZ/N\ell} q_{i+1}\omega^{ik}\mapsto \sum_{i\in \bZ/N\ell} q_{i+1-N}\omega^{ik}=\omega^{kN}\sum_{i\in \bZ/N\ell} q_{i+1-N}\omega^{(i-N)k}=\omega^{kN}x_k.\]
Therefore, the ${\bZ/\ell}$-action on the diagonal form is as follows: if we take $\mu=\omega^N$ a primitive $\ell^{th}$ root of unity, then \begin{equation}\label{action formula under diagonal form}
 \mu\cdot(x_k)_k=(\mu^kx_k)_k,   
\end{equation}
where $k=0,1,\dots, N\ell/2-1$ if $\ell$ is odd and $k=0,1,\dots, N\ell/2$ if $\ell$ is even. 

Consequently, the fixed point sets $\left( \mathcal{W}_{\ell}^N (z) \right)^{\bZ/\ell}$ is again a quadratic cone, whose index is $1+2\big\lfloor {-z/\ell} \big\rfloor$. The diagonal $\Delta_{\bR^{N\ell}}$ is given by $\{(x_0,0,\dots,0):x_0\in \bR\}$ in diagonal form, it is a subset of $\left( \mathcal{W}_{\ell}^N (z) \right)^{\bZ/\ell}$.

Finally, we return to the isomorphism of \autoref{non-equivariant decomposition}. Take $z'\leq  z\leq 0$. Since $SS(\mathcal{E}_\ell)\subset \{\zeta\leq 0\}$, the microlocal Morse lemma (\autoref{microlocal morse}) shows that $\textnormal{R}\Gamma(\bR,(\mathcal{E}_\ell)_{[z,0]})\cong (\mathcal{E}_\ell)_{z}$. Then there is a natural morphism
\[(\mathcal{E}_\ell)_{z'}\cong \textnormal{R}\Gamma(\bR,(\mathcal{E}_\ell)_{[z',0]})\rightarrow \textnormal{R}\Gamma(\bR,(\mathcal{E}_\ell)_{[z,0]})\cong (\mathcal{E}_\ell)_{z}.\]
On the other hand, the isomorphism in \autoref{loop sheaf of GF model of rotation 2} shows that $(\mathcal{E}_\ell)_{z}\cong \textnormal{R}\Gamma_c(\mathcal{W}_{\ell}^N (z),\bK)\cong  \textnormal{R}\Gamma_c(\bR^{1+2\lfloor {-z} \rfloor},\bK)$. Then the natural morphism above is given by
\[\textnormal{R}\Gamma_c(\bR^{1+2\lfloor {-z'} \rfloor},\bK) \rightarrow \textnormal{R}\Gamma_c(\bR^{1+2\lfloor {-z} \rfloor},\bK).\]
The decomposition, \autoref{non-equivariant decomposition}, tells us that the natural morphism is $0$ in the non-equivariant category. 

In the equivariant category, the morphism is induced from a vector bundle 
\[\bR^{1+2\lfloor {-z'} \rfloor} \times_{\bZ/\ell} S^\infty \rightarrow \bR^{1+2\lfloor {-z} \rfloor}\times_{\bZ/\ell} S^\infty,\]
which is a lifting of the following vector bundle
\[\bR^{1+2\lfloor {-z'}\rfloor}\times_{S^1} S^\infty \rightarrow \bR^{1+2\lfloor {-z}\rfloor}\times_{S^1} S^\infty\]
via the natural restriction $\bZ/\ell\subset S^1$.

So, in the $S^1$-equivariant derived category, the morphism is given by the mod $\bK$ reduction of the ($\bZ$-coefficient) top Chern class for the second vector bundle, which is $(\lfloor {-z'}\rfloor!/\lfloor {-z}\rfloor!)u^{\lfloor {-z'}\rfloor-\lfloor {-z}\rfloor}\in \textnormal{Ext}^{*}_{S^1}(\bK,\bK)$, which is non-zero. After restricting to the $\bZ/\ell$-equivariant derived category, the morphism is non-zero for a suitable reduction in a finite field $\bK$. For example, we could require $0<\lfloor {-z'} \rfloor<\text{char}(\bK)$ to make sure that the morphism is non-zero. 

{\bf The higher dimension ($d\geq 2$) case: }Now, we start to discuss the higher dimension situation. We already know that $\mathcal{CL}_\ell(\mathcal{T}) \cong   \mathcal{CL}_\ell(\mathcal{S}) ^{\boxstar d}$. Then \autoref{chiu result on loop sheaf} shows that\begin{equation}\label{loop sheaf of torus action}
\mathcal{CL}_\ell(\mathcal{T}) \cong {\mathcal{E}_{\ell}}^{\dboxtimes d}\dboxtimes {\bK}_{\{t\geq 0\}}.
\end{equation}
As the decomposition indicated in \autoref{non-equivariant decomposition}, ${\mathcal{E}_{\ell}}^{\dboxtimes d}|_{\{z\leq 0\}}$ has a decomposition on $\{z\leq 0\}$ indexed by lattice points. Besides, we also have a topological description of ${\mathcal{E}_{\ell}}^{\dboxtimes d}|_{\{z\leq 0\}}$. Let us first discuss the topological description and then state the decomposition. Since we have $d$ copies of $\mathcal{E}_{\ell}$, it is convenient to denote $\bq=(q^1,\dots,q^d)\in \bR^d \eqqcolon V_{\bq}$.
Then \autoref{loop sheaf of GF model of rotation 2} shows us  
\begin{equation}\label{E^d sheaf of torus action}
\left.{\mathcal{E}_{\ell}}^{\dboxtimes d}\right|_{(-N\ell/4,0]^d} \cong {\textnormal{R}}\pi_{\underline{\bq}!}{\bK}_{\prod_{i=1}\mathcal{W}^N_{\ell,i}},   
\end{equation}
where $\mathcal{W}^N_{\ell,i} $ means the $i$th copy of one $\mathcal{W}^N_\ell$, $i\in [d]=\{1,\dots,d\}$,  $\underline{\bq}=(\bq_1,\dots,\bq_{N\ell})$ and $\bq_k=(q_k^1,\dots,q_k^d)$. Let $z=(z_1,\dots,z_d)$, we also define
\begin{align*}
    ^d\mathcal{W}^N_{\ell}\coloneqq &   \prod_{i=1}^d {} \mathcal{W}^N_{\ell,i}
    =\{(z,\bq_1,\dots,\bq_{N\ell})\in (-N\ell/4,0]^d\times V^{N\ell}: Q_{z_i}((q_k^i)_{k\in [N\ell]})\geq 0, \,i\in [d]\},\\
     ^d\mathcal{W}^N_{\ell}(z)\coloneqq&\prod_{i=1}^{d}{}        \mathcal{W}^N_{\ell,i}(z_i)
    =\{(\bq_1,\dots,\bq_{N\ell})\in   V^{N\ell}: Q_{z_i}((q_k^i)_{k\in [N\ell]})\geq 0, \,i\in [d]\}.
\end{align*}
The group ${\bZ/\ell}$ acts on each $\mathcal{W}^N_\ell$ via $(q_k^i)_{k\in [N\ell]}\mapsto (q_{k-N}^i)_{k\in [N\ell]}$. Therefore, ${\bZ/\ell}$ acts diagonally on $^d\mathcal{W}^N_{\ell}$ via $(\bq_k)_{k\in [N\ell]}\mapsto (\bq_{k-N})_{k\in [N\ell]}$. 

The diagonalization applies for each $i\in [d]$, and then on $^d\mathcal{W}^N_{\ell}(z)$. We set $\bx_k=(x^1_k,\dots,x^d_k)$ and $\bx^i=(x^i_1,\dots,x^i_{N\ell})$, then the coordinates of $^d\mathcal{W}^N_{\ell}(z)$ after diagonalization are $(x_k^i)_{i,k}=(\bx_k)_k=(\bx^i)_i$, where $ k=0,1,\dots, (N\ell-1)/2$ if $\ell$ is odd and $k=0,1,\dots, N\ell/2$ if $\ell$ is even.

So for each $z=(z_1,\dots,z_d)\in (-N\ell/4,0]^d$, the space $^d\mathcal{W}^N_{\ell}(z)$ is a product of quadratic cones of indices $1+2\big\lfloor {-z_i}\big\rfloor$ respectively, and then $^d\mathcal{W}^N_{\ell}(z)$ is properly homotopic to a quadratic cone of index $d+2I(z)$, where $I(z)=\sum_{i=1}^d \big\lfloor {-z_i} \big\rfloor$. Therefore, $^d\mathcal{W}^N_{\ell}(z)$ is properly homotopic to $\bR^{d+2I(z)}$ and a refinement of this fact will be proven in \autoref{inductionstep of minimaldegree}. 

The fixed point sets $\left(^d \mathcal{W}_{\ell}^N (z) \right)^{\bZ/\ell}$ is also properly homotopic to a quadratic cone of index $d+2I(z/\ell)$. The diagonal $\Delta_{V^{N\ell}}$ is given by $\{(x_k^i)_{i,k}:\forall i,\,\forall k\neq 0,\,x^i_k=0,\,x_0^i\in \bR\}$ in diagonal form, it is a subset of $\left(^d \mathcal{W}_{\ell}^N (z) \right)^{\bZ/\ell}$.

To be clear, let us set some higher dimensional interval notation. For $x,y\in \bR^d$, we let $(x,y]=\prod_{i=1}^d(x_i,y_i]$ be the half-open cube from $x$ to $y$. We can define half-open cubes $[x,y)$, open cubes $(x,y)$, and closed cubes $[x,y]$ in the same way. Recall that we use $\overline{xy}$ to denote the segment between $x,y$; only when $d=1$, we have $\overline{xy}=[x,y]$. 
Also, recall $O\in \bR^d$ is the origin, and we set $\mathbbm{1}=(1,\dots,1)$ and $e_i=(\delta_{ij})_{j=1}^d$ where $\delta_{ij}$ stands for the Kronecker symbol. 

Then either our topology description of $^d\mathcal{W}^N_{\ell}$ or the decomposition result \autoref{non-equivariant decomposition} shows that 
\begin{Lemma}\label{lattice description}For each $z\leq 0$, we have the equivariant isomorphism\[({\mathcal{E}_{\ell}}^{\dboxtimes d})_{z}\cong \textnormal{R}\Gamma_c(\bR^{d+2I(z)},\bK)\cong \bK[-d-2I(z)].\]
In the non-equivariant derived category, we have a decomposition as follows: 
\begin{align*}
    {\mathcal{E}_{\ell}}^{\dboxtimes d}|_{\{z\leq 0\}} \cong  \bigoplus_{v\in \bN_0^d}\bK_{(-v-\mathbbm{1},-v]}[-d-2I(-v)].
\end{align*}
In the equivariant derived category, for $z',z\in (-\infty,0]^d$, if $z'_i\leq z_i$ for all $i\in [d]$, the natural morphism, 
\[{\mathcal{E}_{\ell}}^{\dboxtimes d}|_{z'}\cong \bK[-d-2I(z')] \rightarrow {\mathcal{E}_{\ell}}^{\dboxtimes d}|_{z}\cong \bK[-d-2I(z)], \]
is induced by the mod-$\bK$ reduction of the top Chern class of the vector bundle \[\bR^{d+2I(z')} \times_{S^1} E S^\infty \rightarrow \bR^{d+2I(z)}\times_{S^1} S^\infty,\]
where $S^1$ acts on $\bR^d$ trivially, and acts on $\bR^{2I(z)}$ by the weight $((1,\dots,\lfloor -z_i\rfloor))_{i\in [d]}.$\end{Lemma}

{\bf Propagation and $\gamma$-topology} Finally, let us describe a propagation phenomena of $\mathcal{E}_{\ell}$. It is simple but crucial for our later application. Notice that, for a given $z\in (-N\ell/4,0]$, the map $z \mapsto \mathcal{W}^N_\ell(z)$ is a decreasing map with respect to the inclusion order. Microlocally, it means that $SS(\mathcal{E}_{\ell})\subset \{\zeta \leq 0\}$, which is already known to us as a general fact from the microsupport estimate (by \autoref{microlocal morse} for example). We have, for $z\leq 0$, that
\[(\mathcal{E}_{\ell})_{z}\cong \textnormal{R}\Gamma_c(\bR, (\mathcal{E}_{\ell})_{[z,0]})\cong \bK[-1-2\big\lfloor {-z} \big\rfloor].\]
In higher dimension, the same thing still happens. 
For $z\in (-\infty,0]^d$, we can compute directly, using \autoref{loop sheaf of GF model of rotation 2}, to see that
\begin{equation}\label{simpler propagating iso}
(\mathcal{E}_{\ell}^{\dboxtimes d})_{z}\cong \textnormal{R}\Gamma_c(\bR^d_z, (\mathcal{E}^{\dboxtimes d}_{\ell})_{[z,O]})\cong \bK[-d-2I(z)].    
\end{equation}

However, as $SS(\mathcal{E}_{\ell}^{\dboxtimes d}) \subset \bR^d_z\times (-\infty,0]^d_\zeta$ and $[z,O]=(\{z\}+[0,\infty)^d)\cap (-\infty,0]^d$, the isomorphisms \eqref{simpler propagating iso} can also be obtained pure microlocally.

For a closed proper convex cone $\gamma\subset \bR^d$, we can consider the $\gamma$-topology on $\bR^d$. We refer to \cite[Section 3.5, Section 5.2]{KS90} and \cite{KS2018} for more about the definition and sheaf theory related to $\gamma$-topology. A closed subset $Z\subset \bR^d$ is $\gamma$-closed if $Z=Z-\gamma$. Now, consider the induced topology of the $\gamma$-topology on $\gamma$. Then the notation $\Sigma_\gamma=(\Sigma-\gamma)\cap \gamma$ is exactly the closure of the $\gamma$-topology for a closed set $\Sigma\subset \gamma$. So, for a closed subset $\Sigma\subset \gamma$, we say $\Sigma_\gamma$ the {\em $\gamma$-closure} of $\Sigma$ and we say $\Sigma$ is {\em $\gamma$-closed} if $\Sigma_\gamma=\Sigma$.

Now, let us take a sheaf $F\in D(\bR^d)$ satisfying $SS(F)\subset \bR^d\times (-\gamma)$. Then we claim that if $\Sigma$ is compact and convex, we have that\begin{equation}\label{gamma closed cohomology}
 \textnormal{R}\Gamma_c(\bR^d_z,F_{\Sigma})\cong \textnormal{R}\Gamma_c(\bR^d_z,F_{\Sigma_\gamma}).   
\end{equation}

We can give a {\em proof} of $\eqref{gamma closed cohomology}$ as follows: The microsupport  $SS(F)\subset \bR^d\times (-\gamma)$ together with the microlocal cut-off lemma \cite[Proposition 5.2.3]{KS90} shows that $F_{\gamma}$ is a $-\gamma$-sheaf\  on $\bR^d_{z}$, i.e. $F_{\gamma}$ is pullbacked from a sheaf on $\bR^d_{z}$ equipped with the $-\gamma$-topology. Then its global section over $\Sigma$ is isomorphic to the global section over the $\gamma$-closure $\Sigma_\gamma$ by \cite[Proposition 3.5]{KS90}.

Now, as $SS(\mathcal{E}_{\ell}^{\dboxtimes d}) \subset \bR^d_z\times (-\infty,0]^d_\zeta$, we can take $\gamma=(-\infty,0]^d$, which is a proper convex cone. So, we can talk about $\Sigma_\gamma$ for a closed subset $\Sigma\subset \gamma$. For example, $\{z\}_\gamma=[z,O]$ for $z\in \gamma$. Then we apply \eqref{gamma closed cohomology} to $\mathcal{E}_{\ell}^{\dboxtimes d}$ to obtain the first isomorphism of \eqref{simpler propagating iso} and the stronger result: for a compact and convex set $\Sigma$, we have
\begin{equation}\label{stronger propagating iso}
\textnormal{R}\Gamma_c(\bR^d_z, (\mathcal{E}_{\ell})_{\Sigma}).
\cong \textnormal{R}\Gamma_c(\bR^d_z, (\mathcal{E}_{\ell})_{\Sigma_\gamma}).
\end{equation}

\subsection{Proof of \autoref{structure toric domain module}}In this subsection, we will prove the structure theorem. 

{\bf Idea and sketch of the proof: }
We present $(F_{\ell}(X_{\Omega},\bK))_T$ as ${\textnormal{R}}\Gamma_c\left(\bR^d, ({\mathcal{E}_{\ell}}^{\dboxtimes d} )_{\Omega_T^\circ}   \right)$, to which we can apply the results in \autoref{Cohomology sheaf section}. Now, consider the inclusion sequence $\{O\}\subset \overline{ZO}\subset \Omega_T^\circ$, then we have a commutative diagram.
\begin{center}
 \begin{tikzcd}
{{\textnormal{R}}\Gamma_c\left(\bR^d, ({\mathcal{E}_{\ell}}^{\dboxtimes d} )_{\Omega_T^\circ}   \right)} \arrow[rd] \arrow[r] & {{\textnormal{R}}\Gamma_c\left(\bR^d, ({\mathcal{E}_{\ell}}^{\dboxtimes d} )_{\overline{ZO}}   \right)} \arrow[d] \arrow[r, "\cong"] & {\bK[-d-2I(Z)]} \arrow[d, "k_Z u^{I(Z)}"] \\
                                                                                                                             & {{\textnormal{R}}\Gamma_c\left(\bR^d, ({\mathcal{E}_{\ell}}^{\dboxtimes d} )_{{O}}  \right)} \arrow[r, "\cong"]                       & {\bK[-d]}                                   
\end{tikzcd}
\end{center}
By definition, the inclined arrow composed with the bottom isomorphism gives the fundamental class, and we call the upper horizontal arrow (up to constant) $\Lambda_{Z,\ell}$. \autoref{lattice description} shows that (up to a constant $k_Z$) the vertical morphism is $u^{I(Z)}$. Eventually, we absorb the constant into $\Lambda_{Z,\ell}$ since the constant is uniquely determined by $Z$ and $\ell$. The commutative diagram induces a decomposition $\eta^{\bZ/\ell}_T(X_{\Omega},\bK)=u^{I(Z)}\Lambda_{Z,\ell}$. In particular, the presence of $\Lambda_{Z,\ell}$ shows us the minimal cohomology degree is smaller than $-2I(Z)$ for all $Z\in \Omega_T^\circ$.

To achieve the non-torsioness, we need to prove that the fundamental class $\eta^{\bZ/\ell}_T(U,\bK)$, a degree $0$ morphism, is non-zero. We have two approaches. The easiest one is to take a small ball $B\subset U$, and then we apply the computation for balls (which can be derived directly from \autoref{lattice description}). The harder one is that we study its cocone, which is computed by homology of a union of finite dimensional manifolds. 

I will discuss the harder approach since it provides us with more structural results, for example, rank and degree distribution of torsion elements. We will argue by a localization trick. In particular, we show that $H^*C^{\bZ/\ell}_T(X_\Omega,\bF_{p_\ell})$ is a finitely generated module over $\bF_{p_\ell}[u]$ whose free part is of rank $2$. Then the argument also shows that torsion cannot happen in non-negative degrees. 

Finally, we study further the cocone of the fundamental class to show that the minimal cohomology degree is greater than $-2I(\Omega_T^\circ)$.

Therefore, our technical discussion will focus on the formula for  ${\textnormal{R}}\Gamma_c\left(\bR^d, ({\mathcal{E}_{\ell}}^{\dboxtimes d} )_{W}   \right)$ for a locally closed set $W\subset \Omega_T^\circ$, and its minimal degree estimate. We will organize our arguments in the following way:
\begin{itemize}[fullwidth]
\item We first compute $(F_{\ell}(X_{\Omega},\bK))_T$ using its isomorphism with ${\textnormal{R}}\Gamma_c\left(\bR^d, ({\mathcal{E}_{\ell}}^{\dboxtimes d} )_{\Omega_T^\circ}   \right)$, where ${\mathcal{E}_{\ell}}^{\dboxtimes d}$ is discussed in the last subsection. Consequently, we derive a similar formula for the cocone of the fundamental class, i.e. $ {\textnormal{R}}\Gamma_c\left(\bR^d, ({\mathcal{E}_{\ell}}^{\dboxtimes d} )_{\Omega_T^\circ \setminus O}   \right)$. 
Then the result of the last subsection will reduce them to a cohomology of a topological space ${\mathcal{W}_{\ell}^N}(\Omega_T^\circ)$ (see \eqref{definition of U(C)} later for its definition). We will achieve the targets in \autoref{cohomology of E^d}.

\item Recall the lattice decomposition (\autoref{non-equivariant decomposition}) of the sheaf $\mathcal{E}^{\dboxtimes d}_\ell$. We hope to utilize the lattice description to obtain a minimal degree estimate for the cocone of the fundamental class. A problem here is that we are computing cohomology of sheaves over $\Omega_T^\circ$, while $\Omega_T^\circ$ is usually curved. So, our idea is to decompose $\Omega_T^\circ$ into ``almost cubes'', which are introduced in \autoref{cornerlemme}. Next, we will study the proper homotopy type of ${\mathcal{W}_{\ell}^N}(\Omega_T^\circ)$ in the case that $\Omega_T^\circ$ is an almost cube. This is \autoref{inductionstep of minimaldegree}. 

\item Finally, we use the computation for almost cubes as an induction step to obtain the minimal degree estimate in general. This is done using the Mayer–Vietoris sequence in \autoref{minimaldegree}. After that, we will finish the proof of \autoref{structure toric domain module}.

\begin{RMK} A technical fact is that in the induction process of the minimal degree estimate, we have to deal with some sets that are not necessarily convex. However they are $\gamma$-closed. So, we will present the result for $\gamma$-closed set $\Sigma$, not only $\Omega_T^\circ$, from the beginning in the following.
\end{RMK}
\end{itemize}

{\bf Preliminary lemmas: }For a convex toric domain $X_\Omega$, by the discussion following \eqref{GFmodel convex case}, we have ${\Omega^\circ}\subset \gamma^d\times [0,\infty)$. 
Then \eqref{reformulation of F} and \eqref{loop sheaf of torus action} show that
\begin{align}\label{F of convex toric domains 2}
\begin{aligned}
F_{\ell}(X_{\Omega},\bK)&\cong {\textnormal{R}}\pi_{{z}!}{\textnormal{R}}s_{t!}^2\left( {\mathcal{E}_{\ell}}^{\dboxtimes d}\dboxtimes {\bK}_{\{t_1\geq 0\}} \dotimes  \pi_{t_1}^{-1}  {\bK}_{\Omega^{\circ}}  \right)
  \cong {\textnormal{R}}\pi_{{z}!} \left[ ({\mathcal{E}_{\ell}}^{\dboxtimes d} \dboxtimes {\bK}_{\{t\geq 0\}})_{ \Omega^\circ}\right].
\end{aligned}
 \end{align}
Therefore, we conclude that
\begin{equation}\label{F of convex toric domains}
(F_{\ell}(X_{\Omega},\bK))_T\cong {\textnormal{R}}\Gamma_c\left(\bR^d, ({\mathcal{E}_{\ell}}^{\dboxtimes d} )_{\Omega_T^\circ}   \right).\end{equation}
In particular, for $X_{\bR^d}=T^*V$, we have 
\[(F_{\ell}(T^*V,\bK))_T\cong {\textnormal{R}}\Gamma_c\left(\bR^d, ({\mathcal{E}_{\ell}}^{\dboxtimes d} )_{O}   \right)\cong \bK[-d].\]
Then, by definition, the fundamental class is
\[ {\textnormal{R}}\Gamma_c\left(\bR^d, ({\mathcal{E}_{\ell}}^{\dboxtimes d} )_{\Omega_T^\circ}   \right)\rightarrow  {\textnormal{R}}\Gamma_c\left(\bR^d, ({\mathcal{E}_{\ell}}^{\dboxtimes d} )_{O}   \right)\cong \bK[-d].\]
For $Z\in {\Omega_T^\circ}$, we apply \eqref{stronger propagating iso} for the segment $\Sigma=\overline{ZO}$, then we have (recall that $[Z,O]$ denotes a cube here)
\[ \textnormal{R}\Gamma_c(\bR^d_z,(\mathcal{E}_{\ell}^{\dboxtimes\ell})_{\overline{ZO}})\cong \textnormal{R}\Gamma_c(\bR^d_z,(\mathcal{E}_{\ell}^{\dboxtimes\ell})_{[Z,O]})\cong \bK[-d-2I(Z)],\]
since $[Z,O]=\overline{ZO}_\gamma$.
Now, we can embed the fundamental class into an excision triangle: 
\[{\textnormal{R}}\Gamma_c\left(\bR^d, ({\mathcal{E}_{\ell}}^{\dboxtimes d} )_{{\Omega}^\circ_T\setminus O }   \right) \rightarrow {\textnormal{R}}\Gamma_c\left(\bR^d , ({\mathcal{E}_{\ell}}^{\dboxtimes d} )_{{\Omega}^\circ_T} \right)\xrightarrow{\eta^{\bZ/\ell}_T(X_\Omega,\bK)} {\textnormal{R}}\Gamma_c\left(\bR^d, ({\mathcal{E}_{\ell}}^{\dboxtimes d})_{O}   \right)  \xrightarrow{+1}.\]

Both ${\Omega_T^\circ}$ and ${\overline{ZO}}$ are compact convex. We would like to apply the isomorphism \eqref{E^d sheaf of torus action} to compute the cohomology of $\mathcal{E}_{\ell}^{\dboxtimes d}$ in term of $^d\mathcal{W}^N_\ell$. 

{\bf Assumption:} For any compact subset $\Sigma \subset \gamma$, we will fix an odd integer $N=N(\Sigma)>0$ and a positive number $\varepsilon>0$ such that $\Sigma \subset [-N\ell/4-\varepsilon,0]^d$. The existence of $N$ and $\varepsilon$ is ensured by the compactness of $\Sigma$.

\begin{Lemma}\label{cohomology of E^d}For a compact set $\Sigma \subset \gamma$ such that $\Sigma\cap [x,y]$ is empty or contractible for all $x\leq y,\,x,y\in \gamma$ (recall here, $[x,y]$ means the closed cube from $x$ to $y$). We have
\begin{equation}\label{section of E^d}
{\textnormal{R}}\Gamma_c\left(\bR^d, ({\mathcal{E}_{\ell}}^{\dboxtimes d})_{\Sigma}   \right) \cong  {\textnormal{R}}\Gamma_c\left(\mathcal{W}_\ell^N(\Sigma), {\bK}    \right),    
\end{equation}
where 
\begin{equation}\label{definition of U(C)}
{\mathcal{W}_\ell^N}(\Sigma)={\bigcup_{{z} \in \Sigma}}  {^d}\mathcal{W}^N_\ell(z)=\pi_z \left({^d}\mathcal{W}^N_\ell \cap (\Sigma \times V^{N\ell})\right).
\end{equation} 
As $\Sigma=\Omega_T^\circ$ is convex for $T \geq 0$, we have, in particular \begin{align}\label{stalks of F of toric domain}
\begin{aligned}
&(F_{\ell}(X_{\Omega},\bK))_T\cong {\textnormal{R}}\Gamma_c\left(\bR^d, ({\mathcal{E}_{\ell}}^{\dboxtimes d} )_{\Omega_T^\circ}   \right)\cong  {\textnormal{R}}\Gamma_c\left({\mathcal{W}_{\ell}^N}(\Omega_T^\circ), {\bK}    \right),\\
& {\textnormal{R}}\Gamma_c\left(\bR^d, ({\mathcal{E}_{\ell}}^{\dboxtimes d} )_{\Omega_T^\circ\setminus O )}   \right)\cong  {\textnormal{R}}\Gamma_c\left({\mathcal{W}_{\ell}^N}(\Omega_T^\circ)\setminus \Delta_{V^{N\ell}}, {\bK}    \right).\\
\end{aligned}
 \end{align}
\end{Lemma}
\begin{proof}For $N=N(\Sigma)>0$ and $\varepsilon>0$ such that $\Sigma \subset [-N\ell/4-\varepsilon,0]^d$, we have the isomorphism \eqref{E^d sheaf of torus action} ${\mathcal{E}_{\ell}}^{\dboxtimes d}|_{[-N\ell/4-\varepsilon,0]^d} \cong {\textnormal{R}}\pi_{\underline{\bq}!}{\bK}_{^d\mathcal{W}^N_\ell \cap ([-N\ell/4-\varepsilon,0]^d\times V^{N\ell})} $, and then we obtain
\begin{align*}
   {\textnormal{R}}\Gamma_c\left(\bR^d, ({\mathcal{E}_{\ell}}^{\dboxtimes d})_{\Sigma}    \right) \cong  {\textnormal{R}}\pi_{ {z}!} \left( ({\textnormal{R}}\pi_{\underline{\bq}!}{\bK}_{^d\mathcal{W}^N_\ell})_{\Sigma}\right)
   \cong {\textnormal{R}}\pi_{ {z}!}  {\textnormal{R}}\pi_{\underline{\bq}!}{\bK}_{^d\mathcal{W}^N_\ell\cap(\Sigma\times V^{N\ell}  )}
   \cong {\textnormal{R}}\pi_{\underline{\bq}!}{\textnormal{R}}\pi_{ {z}!}  {\bK}_{^d\mathcal{W}^N_\ell\cap(\Sigma\times V^{N\ell} )}.
\end{align*}
Claim: When restricted to $^d\mathcal{W}^N_\ell\cap(\Sigma\times V^{N\ell} )$, the fiber of $\pi_{ {z}}$ is compact and contractible if it is non-empty. Indeed, Chiu  proved, in the Lemma 4.10 of \cite{chiu2017}, that the fibers of the restriction of $\pi_{z_i}$ on $\mathcal{W}^N_\ell \cap ([-N\ell/4-\varepsilon,0]\times \bR^{N\ell})$ are closed intervals. So the fibers of the restriction of $\pi_{z}$ on $^d\mathcal{W}^N_\ell \cap ([-N\ell/4-\varepsilon,0]\times \bR^{N\ell})$ are closed cubes. Hence, the fibers of the restriction of $\pi_{z}$ on $^d\mathcal{W}^N_\ell\cap(\Sigma\times V^{N\ell} )$ are intersections of closed cubes and $\Sigma$, which are either empty or compact and contractible by assumption. 

Consequently, the Vietoris-Begel theorem implies
\[{\textnormal{R}}\pi_{{z}!}  {\bK}_{^d\mathcal{W}^N_\ell\cap(\Sigma\times V^{N\ell} )}\cong {\bK}_{\pi_{{z}}(^d\mathcal{W}^N_\ell\cap(\Sigma\times V^{N\ell}  ))}={\bK}_{{\mathcal{W}_{\ell}^N}(\Sigma)}.\]
Therefore, ${\textnormal{R}}\Gamma_c\left(\bR^d, ({\mathcal{E}_{\ell}}^{\dboxtimes d} )_\Sigma   \right)={\textnormal{R}}\pi_{\underline{\bq}!}({\bK}_{{\mathcal{W}_{\ell}^N}(\Sigma)})\cong  {\textnormal{R}}\Gamma_c\left({\mathcal{W}_{\ell}^N}(\Sigma), {\bK}    \right).$

The statements involve $\Sigma=\Omega_T^\circ$ follow from the discussion above the lemma.
\end{proof}

\begin{RMK}\label{rmkgammatopology}The condition in the lemma is true for compact convex sets $\Sigma$. For our last applications, we need to, in adition, consider $\gamma$-closed sets for $\gamma=(-\infty,0]^d$. For a closed set $\Sigma\subset \gamma$, the $\gamma$-closure is defined as $\Sigma_\gamma=(\Sigma-\gamma)\cap \gamma$. We say $\Sigma$ is {\em $\gamma$-closed} if $\Sigma_\gamma=\Sigma$. For example, $\gamma\setminus(\mathring{\gamma}+z)$ is $\gamma$-closed for $z\in \gamma $, and the intersection of two $\gamma$-closed sets is $\gamma$-closed. The $\gamma$-closed sets satisfy the condition of \autoref{cohomology of E^d}. Indeed, for a closed cube $[x,y]$ with $x,y\in \gamma$, a $\gamma$-closed $\Sigma$, and any $z\in \Sigma \cap [x,y]$, we have $\overline{xz}\subset \Sigma \cap [x,y]$. Therefore, $\Sigma \cap [x,y]$ is star-shaped and then contractible. 

As $z\mapsto \mathcal{W}(z)$ is a decreasing map, one can see that
${\mathcal{W}_{\ell}^N}(\Sigma)={\mathcal{W}_{\ell}^N}(\Sigma_\gamma)$
for all compact subset $\Sigma\subset \gamma$. In particular, if $\Sigma$ satisfies the condition of \autoref{cohomology of E^d}, then the lemma implies that
\[{\textnormal{R}}\Gamma_c\left(\bR^d , ({\mathcal{E}_{\ell}}^{\dboxtimes d} )_{\Sigma_\gamma} \right)\xrightarrow{\cong}{\textnormal{R}}\Gamma_c\left(\bR^d , ({\mathcal{E}_{\ell}}^{\dboxtimes d} )_{\Sigma} \right),\]
which can be seen as a generalization of \eqref{stronger propagating iso} (which is only true for compact convex subsets) for compact sets satisfying the condition of \autoref{cohomology of E^d}. Later, we will mainly focus on $\gamma$-closed sets $\Sigma$.
\end{RMK}

Now, to understand the cohomology of ${\mathcal{W}_{\ell}^N}(\Sigma)$ (see \eqref{definition of U(C)}), we start from a special case that $\Sigma$ is an ``almost closed cube'', which will be defined in
\autoref{cornerlemme}. Let us recall some notation and introduce some new ones.

First, recall that, for $x,y\in \bR^d$, we let $(x,y]=\prod_{i=1}^d(x_i,y_i]$ be the half-open cube from $x$ to $y$. Similarly, we define open cubes and closed cubes in this way. Also, recall $O\in \bR^d$ is the origin, and we set $\mathbbm{1}=(1,\dots,1)$ and $e_i=(\delta_{ij})_{j=1}^d$ where $\delta_{ij}$ stands for the Kronecker symbol. For simplicity, we also denote $C_x=[x,x+\mathbbm{1})$ for $x\in \bR^d$.  

Next, for a compact $\gamma$-closed set $\Sigma \subset \gamma$, we set
\begin{align*}
J_\Sigma &=(-\Sigma) \cap \bZ^d_{\geq 0}=\lbrace v\in \bZ^d_{\geq 0}: (-\Sigma)\cap C_v \neq \varnothing \rbrace , \\
\partial J_\Sigma &=\{v\in J_\Sigma: \forall i, v+e_i \not\in J_\Sigma\}= \{v\in J_\Sigma: (-\Sigma)\cap (\overline{C_v}\setminus C_v)=\varnothing\} .     
\end{align*}
The compactness of $\Sigma$ shows that both $J_\Sigma$ and $\partial J_\Sigma$ are finite sets.

\begin{Lemma}\label{cornerlemme} Let  $\Sigma \subset \gamma$ be a compact $\gamma$-closed set. Then $\partial J_\Sigma=\{v\}$ for some $v\in \bZ^d_{\geq 0}$ if and only if $[O,v] \subset -\Sigma \subset  [O,v+\mathbbm{1})$ for the same $v\in \bZ^d_{\geq 0}$. 

We say that $\Sigma$ is an {\em almost cube} if it satisfies these equivalent conditions.
\end{Lemma}
\begin{proof}When $[O,v] \subset -\Sigma \subset  [O,v+\mathbbm{1})$, taking the intersection with $\overline{C_w}\setminus C_w$ for all $w\in J_\Sigma$, we obtain\[ [O,v] \cap (\overline{C_w}\setminus C_w)\subset (-\Sigma) \cap(\overline{C_w}\setminus C_w) \subset  [O,v+\mathbbm{1}) \cap (\overline{C_w}\setminus C_w).  \]
Then we can obtain $\partial J_\Sigma=\{v\}$ from that $[O,v] \cap (\overline{C_w}\setminus C_w)=\varnothing$ only when $v=w$.

Conversely, when $\partial J_\Sigma=\{v\}$, we have $-v\in \Sigma$. So $\Sigma=\Sigma_\gamma$ implies $[-v,O]=\{-v\}_\gamma \subset \Sigma$. Now, suppose $-\Sigma \nsubseteq [O,v+\mathbbm{1})$, then there is a $z\in \Sigma$ such that $-z_i=v_i+1$ for some $i\in [d]$. Therefore, $v+e_i\in J_\Sigma$. If $v+e_i\notin \partial J_\Sigma$, the argument repeats and there exists another $j\in[d]$ such that $v+e_i+e_{j}\in J_{\Sigma}$. We can continue until we obtain a index set $I$ (with possible multiplicities) such that $v+ \sum_{I} e_i\in \partial J_\Sigma$. Since $J_\Sigma$ is a finite set, then the index set must be finite. However, $\partial J_\Sigma=\{v\}$, then $v+ \sum_{I} e_i \notin  \partial J_\Sigma$. Hence we get a contradiction. Then $-\Sigma \subset [O,v+\mathbbm{1})$.\end{proof}
Here, we are going to prove a refinement of the fact that $^d \mathcal{W}^N_{\ell}(-v)$ is properly homotopic to $\bR^{d+2I(-v)}$ as noticed before \autoref{lattice description}.
\begin{Lemma}\label{inductionstep of minimaldegree}For a compact $\gamma $-closed set $\Sigma \subset \gamma$ with $\partial J_\Sigma=\{v\}$, i.e. $\Sigma$ is an almost cube, the subspace $\bR^{d}\times \bC^{I(-v)} $ is a strong deformation retract of ${\mathcal{W}_{\ell}^N}(\Sigma)$ under a proper deformation retraction. Moreover, $\Delta_{V^{N\ell}}\cong \bR^d\times\{0\} \subset \bR^{d}\times \bC^{I(-v)}$ is invariant under the retraction.
\end{Lemma}
\begin{proof}Here, we use the diagonal form of $Q_z$ we introduced in \eqref{diagonal form definition}. Then the coordinate system on $\left(\bR\times \bC^{\frac{N\ell-1}{2}}\right)^d$ is $(x_k^i)_{i,k}=(\bx_k)_{k}=(\bx^i)_{i}$ with $\bx_k=(x^1_k,\dots,x^d_k)$ and $\bx^i=(x^i_0,\dots,x^i_k)$, where $i\in [d]=\{1,\dots,d\}$, $ k=0,1,\dots (N\ell-1)/2$ if $\ell$ is odd and $k=0,1,\dots N\ell/2$ if $\ell$ is even. For shortness, we only deal with the $\ell$ odd case. The $\ell$ even case has the same proof with minor corrections on the notation. Recall that
\begin{align*}\label{definition of V}
& {\mathcal{W}_{\ell}^N}(\Sigma)=\left\lbrace (\bx^i)_{i}=(x_{0}^i,x_{k}^i)_{i,k}: \exists z\in \Sigma, \forall i,\, Q_{z_i}(\bx^i)\geq 0  \right\rbrace, \\
& \Delta_{V^{N\ell}}=\left\lbrace (\bx^i)_{i}=(x_{0}^i,x_{k}^i)_{i,k}: \forall k  \geq 1, i\in [d],\text{ such that } x_{k}^i=0   \right\rbrace.
 \end{align*} 
For $0\leq m \leq (N\ell-1)/2$, $i\in [d]$, consider $h_m^i:\bR\times \bC^{\frac{N\ell-1}{2}}\times [0,1] \rightarrow \bR\times \bC^{\frac{N\ell-1}{2}}$, 
\[h_{m}^i(x_0^i,x_+^i,x_-^i,t)=h^i_{{m},t}(x^i_0,x^i_+,x^i_-)=(x^i_0,x^i_+,t x^i_-),\]
where $x^i_+=(x^i_1,\dots,x^i_{m})$, $x^i_-=(x^i_{{m}+1},x^i_{{m}+2},\dots,x^i_{(N\ell-1)/2})$.

By assumption of the lemma, $v\in -\Sigma\subset [0, N\ell/4)^d$. Then we have $0\leq v_i<N\ell/4 \leq (N\ell-1)/2$.
Now, define $H_v:\left(\bR\times \bC^{\frac{N\ell-1}{2}}\right)^d \times [0,1] \rightarrow \left(\bR\times \bC^{\frac{N\ell-1}{2}}\right)^d$ by
\[H_{v,t}=h_{v_1,t}^1\times \cdots \times h^d_{v_d,t}. \]
Then we have $H_{v,1}$ is the identity map. Next, we have the following:
\begin{itemize}[fullwidth]
    \item $H_{v,t}({\mathcal{W}_{\ell}^N}(\Sigma))\subset {\mathcal{W}_{\ell}^N}(\Sigma)$. 
Indeed, $(\bx^i)_i\in {\mathcal{W}_{\ell}^N}(\Sigma)$ implies there exists $z\in \Sigma$ such that for all $i\in [d]$, we have $Q_{z_i}(\bx^i) \geq 0$. So, in the diagonal form \eqref{diagonal form of Q}, we have for all $i\in [d]$, 
\[\lambda_0(z_i)|x_{0}^i|^2+2\sum_{k=1}^{v_i}\lambda_k(z_i)|x_{k}^i|^2 \geq 2\sum_{k\geq v_i+1}(-\lambda_k(z_i))|x_{k}^i|^2.\]
Now $-\Sigma \subset  [O,v+\mathbbm{1})$ implies that $z_i <v_{i}+1$ for all $i\in [d]$, hence $\lambda_k(z_i)<0$ for $k\geq v_i+1$ and for all $i\in [d]$. So \[\lambda_0(z_i)|x_{0}^i|^2+2\sum_{k=1}^{v_i}\lambda_k(z_i)|x_{k}^i|^2 \geq 2\sum_{k\geq v_i+1}(-\lambda_k(z_i))|x_{k}^i|^2\geq 2t^2\sum_{k\geq v_i+1}(-\lambda_k(z_i))|x_{k}^i|^2,\]
i.e., $Q_{z_i}(h_{v_i,t}(\bx^i))\geq 0$ for all $i\in [d]$. Hence $H_{v,t}(\bx^1,\dots,\bx^d)\in {\mathcal{W}_{\ell}^N}(\Sigma)$.

\item $\left.H_v\right|_{{\mathcal{W}_{\ell}^N}(\Sigma)}$ is proper. Indeed, taking $(\bx^i)_i\in {\mathcal{W}_{\ell}^N}(\Sigma)$ such that $H_v((\bx^i)_i)\in [-M,M]^d$, we have that  $\sum_{k=0}^{v_i}|x_{k}^i|^2+\sum_{k\geq v_i+1}|tx_{k}^i|^2 \leq M$, for all $i\in [d]$.
    
Obviously, $\sum_{k=0}^{v_i}|x_{k}^i|^2 \leq M$, for all $i\in [d]$, and
\begin{align*}
2\max_{\substack{k=0,\dots,v_i\\z\in \Sigma}}|\lambda_k(z_i)|M&\geq \lambda_0(z_i)|x_{0}^i|^2+2\sum_{k=0}^{v_i}\lambda_k(z_i)|x_{k}^i|^2 \\ &\geq 2\sum_{k\geq v_i+1}(-\lambda_k(z_i))|x_{k}^i|^2 \\ &\geq 2\min_{\substack{k\geq v_i+1\\z\in \Sigma}}|\lambda_k(z_i)|\sum_{k\geq v_i+1}|x_{k}^i|^2.   
\end{align*}

Since $\lambda_k(z_i)<0$ for $k\geq v_i+1$, and $z\in \Sigma$, we have $\min_{\substack{k\geq v_i+1\\z\in \Sigma}}|\lambda_k(z_i)|>0$. Consequently,
\[\sum_{k\geq v_i+1}|x_{k}^i|^2 \leq \frac{\max\limits_{\substack{k=1,\dots,v_i\\z\in \Sigma}}|\lambda_k(z_i)|}{\min\limits_{\substack{k\geq v_i+1\\z\in \Sigma}}|\lambda_k(z_i)|} M \eqqcolon KM.\]
It means that $\sum_{k=0}^{v_i}|x_{k}^i|^2+\sum_{k\geq v_i+1}|x_{k}^i|^2 \leq (1+K)M$, for all $i\in [d]$, where $K=K(\Sigma)$ is a constant only depending on ${\mathcal{W}_{\ell}^N}(\Sigma)$. 
    
So, we have shown that the pre-image of a bounded set under $\left.H_v\right|_{{\mathcal{W}_{\ell}^N}(\Sigma)}$ is bounded. It means that $\left.H_v\right|_{{\mathcal{W}_{\ell}^N}(\Sigma)}$ is proper. 
    
Hence $\left.H_v\right|_{{\mathcal{W}_{\ell}^N}(\Sigma)}$ is a proper homotopy with $\left.H_{v,1}\right|_{{\mathcal{W}_{\ell}^N}(\Sigma)}=\text{Id}_{{\mathcal{W}_{\ell}^N}(\Sigma)}$
\item $\bR^d\times \bC^{I(-v)} \times \{0\} \subset {\mathcal{W}_{\ell}^N}(\Sigma)$. Let $(\bx^i)_i\in \bR^d\times \bC^{I(-v)} \times \{0\}$. This means that for all $i\in [d]$, $\bx^i=(x_0^i,x_+^i,x_-^i)$ satisfies $x^i_-=0$. 
  Since $z=-v\in \Sigma$, by assumption, it is enough to check that $Q_{-v_i}(\bx^i) \geq 0$ for all $i\in [d]$. Now $\lambda_k(-v_i)\geq 0$ for $k=0,1,\dots, v_i$. Then for all $i\in [d]$, we have
\[Q_{-v_i}(x_0^i,x^i_+,0)=\lambda_0(-v_i)|x_{0}^i|^2+2\sum_{k=1}^{v_i}\lambda_k(-v_i)|x_{k}^i|^2 \geq 0.\] 
So $(\bx^i)_i\in{\mathcal{W}_{\ell}^N}(\Sigma)$ and then $\bR^d\times \bC^{I(-v)} \times \{0\} \subset {\mathcal{W}_{\ell}^N}(\Sigma)$. 
\item $H_{v,0}({\mathcal{W}_{\ell}^N}(\Sigma))\subset \bR^d\times \bC^{I(-v)} \times \{0\}$. Indeed for $(\bx^i)_i\in {\mathcal{W}_{\ell}^N}(\Sigma)$, we have  $h_{v_i,0}^i(\bx^i)=(x^i_0,x^i_+,0)$ for all $i\in [d]$. Then $H_{v,0}({\mathcal{W}_{\ell}^N}(\Sigma))\subset \bR^d\times \bC^{I(-v)} \times \{0\}$. 

On the other hand, by definition of $h_{v_i}^i$, we have \[h_{v_i}^i(x_0^i,x_+^i,0,t)=(x_0^i,x_+^i,t0)=(x_0^i,x_+^i,0).\]
So $H_{v,t}|_{\bR^d\times \bC^{I(-v)} \times \{0\}}=\text{Id}_{\bR^d\times \bC^{I(-v)} \times \{0\}}$ for all $t\in [0,1]$. Therefore, $\bR^d\times \bC^{I(-v)} \times \{0\}$ is a proper strong deformation retract of ${\mathcal{W}_{\ell}^N}(\Sigma)$ under $H_{v,t}|_{{\mathcal{W}_{\ell}^N}(\Sigma)}$.
\end{itemize}\end{proof}

Below, we will frequently use the equivariant global section functor ${\textnormal{R}}\Gamma_c\left(\bR^d, ({\mathcal{E}_{\ell}}^{\dboxtimes d} )_W   \right)$ for locally closed set $W\subset \gamma$. Then we denote it by
\begin{equation}\label{simpler notation global section}
\Gamma\mathcal{E}(W)\coloneqq {\textnormal{R}}\Gamma_c\left(\bR^d, ({\mathcal{E}_{\ell}}^{\dboxtimes d} )_{W}   \right),    
\end{equation}
for shortening the length of notation until the end of the subsection.

\begin{Lemma}\label{minimaldegree} Let $\Sigma \subset \gamma$ be a compact $\gamma $-closed set such that $\Sigma\subset (-{p_\ell\mathbbm{1}},O]$. Recall the notation $I(\Sigma)$ at \eqref{definition of I and infty}. Then
\[
\textnormal{Ext}^{q-d}_{\bZ/\ell}\left(\Gamma\mathcal{E}({\Sigma\setminus O } ),\bF_{p_\ell} \right) \cong 0
\qquad\text{if }q\notin [-2I(\Sigma),-1] .
\]
The $\bF_{p_\ell}$-vector space $\textnormal{Ext}^{q-d}_{\bZ/\ell}\left(\Gamma\mathcal{E}({\Sigma\setminus O } ),\bF_{p_\ell} \right) $ is finite dimensional.
\end{Lemma}
\begin{proof} 
We proceed by induction on $|J_\Sigma|$. We notice that the maximum $I(\Sigma)$ can be achieved by some $v$ since $\Sigma\cap \bZ^d$ is finite. Moreover, if $v\in J_\Sigma$ satisfies $I(-v)=I(\Sigma)$, then $v\in \partial J_\Sigma$. 
We will use the excision distinguished triangle
\begin{equation}\label{excision sequence toric}
\Gamma\mathcal{E}({\Sigma\setminus O } ) \rightarrow \Gamma\mathcal{E}({\Sigma } )\xrightarrow{\eta_{\Sigma}}\Gamma\mathcal{E}({ O } ) \xrightarrow{+1}.    
\end{equation}

If $|J_\Sigma|=1$, that is $J_\Sigma=\{0\}$, then 
\autoref{inductionstep of minimaldegree} shows that $\eta_{\Sigma}$ is an isomorphism in the derived category. Then  $\Gamma\mathcal{E}({\Sigma\setminus O } ) \cong 0$ by \eqref{excision sequence toric} and the result follows.

Now, we suppose the result is true for all $\Sigma'$ such that $|J_{\Sigma'}|< |J_\Sigma|$ and we
distinguish the cases $|\partial J_\Sigma| = 1$ and $|\partial J_\Sigma| > 1$.

\begin{enumerate}[fullwidth]
    \item\label{1stpossibility}If $\partial J_\Sigma = \{v\}$ is a singleton, i.e. $\Sigma$ is an almost cube. The case $v=0$ is already done and we assume $I(-v)>0$. Then the excision sequence \eqref{excision sequence toric}, \autoref{cohomology of E^d} and \autoref{inductionstep of minimaldegree} together show the
    isomorphisms in the equivariant derived category
    \[\Gamma\mathcal{E}({\Sigma\setminus O } ) \cong {\textnormal{R}}\Gamma_c({\mathcal{W}_{\ell}^N}(\Sigma)\setminus \Delta_{V^{N\ell}},\bF_{p_\ell})\cong {\textnormal{R}}\Gamma(S^{2I(-v)-1},\bF_{p_\ell})[-d-1],\] 
where the action of $\bZ/\ell$ on $S^{2I(-v)-1}$ is given in \eqref{action formula under diagonal form}. We have
\begin{align*}
 \text{Ext}^{*-d}_{\bZ/\ell}\left(\Gamma\mathcal{E}({\Sigma\setminus O } ),\bF_{p_\ell} \right) &\cong\text{Ext}^{*+1}_{\bZ/\ell}\left({\textnormal{R}}\Gamma(S^{2I(-v)-1},\bF_{p_\ell}),\bF_{p_\ell}\right)\\
 &\cong \text{Ext}^{*+1}\left({(\bF_{p_\ell})}_{S^{2I(-v)-1}},\omega_{S^{2I(-v)-1}}^!\right)\\
 &\cong  H_{-*-1}^{\bZ/\ell}(S^{2I(-v)-1},{\bF_{p_\ell}}),
\end{align*}
where we used the equivariant Poincar\'e duality, which holds since $S^{2I(-v)-1}$ is compact and orientable.

Under the assumption $\Sigma\subset (-{p_\ell\mathbbm{1}},O]$, the $\bZ/\ell$-action is free by \eqref{action formula under diagonal form}. Hence $H_{-*-1}^{\bZ/\ell}(S^{2I(-v)-1},{\bF_{p_\ell}})$ computes the usual cohomology of the quotient $Q_{\bZ/\ell}=S^{2I(-v)-1}/(\bZ/\ell)$, which is the lens space of dimension $2I(-v)-1$.

Then, we have
\begin{equation*}
H_q(Q_{\bZ/\ell})=\begin{cases} {\bF_{p_\ell}},&q\in [0,2I(-v)-1],\\
0, &q\notin [0,2I(-v)-1].
\end{cases}
\end{equation*}
Converting to cohomology degree, we obtain: $\text{Ext}^{*-d}_{\bZ/\ell}\left(\Gamma\mathcal{E}({\Sigma\setminus O } ),{\bF_{p_\ell}} \right)$ is concentrated in $[-2I(-v),-1]$ and finite dimensional.

The proof of this part is independent of our induction, so it can be applied to the second case.
\item If $|\partial J_\Sigma|\geq 2$, take $v\in \partial J_\Sigma$ such that $I(-v)=I(\Sigma)$. Then we can take $1>\delta>0$ such that $\Sigma\cap (\gamma+(\delta \mathbbm{1}-v)) \subset (-v,0]$. 
This is possible due to the compactness of $\Sigma$. Let us define:
\begin{align}\label{construcionofAB}
\begin{aligned}
    &A=[\Sigma\cap (\gamma+(\delta \mathbbm{1}-v))]_\gamma,\\
    &B=\Sigma\cap[\gamma\setminus (\mathring{\gamma} +(\delta \mathbbm{1}-v))].
\end{aligned}
\end{align}
Then we have a closed covering $\Sigma=A\cup B$. Moreover, both $A$ and $B$ are compact $\gamma$-closed sets, then so is $A\cap B$ (see \autoref{ConstructionofC}). 
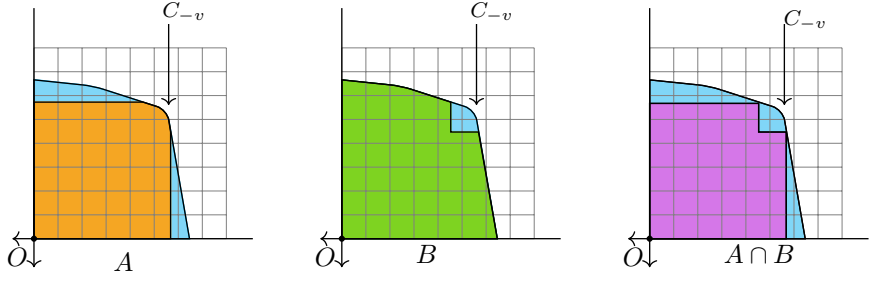
\begin{figure}[htbp]
\begin{center}

    \tikzset{every picture/.style={line width=0.5pt}} 
\begin{tikzpicture}[x=0.75pt,y=0.75pt,yscale=-0.8,xscale=0.8]


\draw  [fill=cyan  ,fill opacity=0.5, cyan, xshift=-2.625,yshift=-3.75] (48.5,70) {[rounded corners]--(85.5,74) -- (131.5,89) }-- (145.5,170) -- (48.5,170) --  cycle ;
\draw  [fill={rgb, 255:red, 245; green, 166; blue, 35 }  ,fill opacity=1, color={rgb, 255:red, 245; green, 166; blue, 35 }, xshift=-2.625,yshift=-3.75 ] (116.5,84.13) {[rounded corners]--(131.5,88.93)} -- (133.7,102.93) -- (133.7,170) -- (48.5,170) -- (48.5,84.13) --  cycle ;
\draw[step=15, help lines](45,165) grid (165,45);
\draw[xshift=-2.625,yshift=-3.75]  (48.5,70.13) {[rounded corners]--(85.5,74) -- (131.5,88.93) }-- (145.5,170) -- (48.5,170) --  cycle ;
\draw[xshift=-2.625,yshift=-3.75]  (116.5,84.13) {[rounded corners]--(131.5,88.93)} -- (133.7,102.93) -- (133.7,170) -- (48.5,170) -- (48.5,84.13) --  cycle ;
\draw[->,xshift=-2.625,yshift=-3.75]    (132.5,35) -- (132.5,86) ;
\draw[xshift=-2.625,yshift=-3.75] (127,20) node [anchor=north west][inner sep=0.75pt]  [font=\footnotesize]  {$C_{-v}$};

\draw[->,xshift=-2.625,yshift=-3.75]    (48.5,25.0) -- (48.5,188) ;
\draw[->,xshift=-2.625,yshift=-3.75]    (185,170) -- (35,170) ;
\draw [fill=black,xshift=-2.625,yshift=-3.75](48.5,170) circle (1.5) ;
\draw[xshift=-2.625,yshift=-3.75] (30,174) node [anchor=north west][inner sep=0.75pt]    {$O$};

\draw (93,172.5) node [anchor=north west][inner sep=0.75pt]    {$A$};


\draw  [fill=cyan  ,fill opacity=0.5 ,xshift=141.375,yshift=-3.75, cyan] (48.5,70.13) {[rounded corners]--(85.5,74) -- (131.5,88.93) }-- (145.5,170) -- (48.5,170) --  cycle ;
\draw  [fill={rgb, 255:red, 126; green, 211; blue, 33 }  ,fill opacity=1, color={rgb, 255:red, 126; green, 211; blue, 33 },xshift=-2.675,yshift=-3.75 ] (308.5,102.93) -- (325.8,102.93) -- (337.5,170) -- (240.6,170) -- (240.6,70.13) {[rounded corners]-- (277.5,74)} -- (308.5,84.13) -- cycle ;
\draw[step=15, help lines,xshift=144](45,165) grid (165,45);
\draw  [xshift=141.375,yshift=-3.75] (48.5,70.13) {[rounded corners]--(85.5,74) -- (131.5,88.93) }-- (145.5,170) -- (48.5,170) --  cycle ;
\draw  [xshift=-2.675,yshift=-3.75 ] (308.5,102.93) -- (325.8,102.93) -- (337.5,170) -- (240.6,170) -- (240.6,70.13) {[rounded corners]-- (277.5,74)} -- (308.5,84.13) -- cycle ;

\draw[->,xshift=141.375,yshift=-3.75]    (132.5,35) -- (132.5,86) ;
\draw[xshift=141.375,yshift=-3.75] (127,20) node [anchor=north west][inner sep=0.75pt]  [font=\footnotesize]  {$C_{-v}$};

\draw[->,xshift=141.375,yshift=-3.75]    (48.5,25.0) -- (48.5,188) ;
\draw[->,xshift=141.375,yshift=-3.75]    (185,170) -- (35,170) ;
\draw [fill=black,xshift=141.375,yshift=-3.75](48.5,170) circle (1.5) ;
\draw[xshift=141.375,yshift=-3.75] (30,174) node [anchor=north west][inner sep=0.75pt]    {$O$};

\draw[xshift=141.375,yshift=-3.75] (93,172.5) node [anchor=north west][inner sep=0.75pt]    {$B$};


\draw  [fill=cyan  ,fill opacity=0.5 ,xshift=285.375,yshift=-3.75] (48.5,70.13) {[rounded corners]--(85.5,74) -- (131.5,88.93) }-- (145.5,170) -- (48.5,170) --  cycle ;
\draw  [fill={rgb, 255:red, 213; green, 119; blue, 232 }  ,fill opacity=1 ,xshift=-2.675,yshift=-3.75] (500.5,102.93) -- (517.7,102.93) -- (517.7,170) -- (432.5,170) -- (432.5,133.5) -- (432.5,84.93) -- (500.5,84.93) -- cycle ;
\draw[step=15, help lines, xshift=288](45,165) grid (165,45);
\draw  [xshift=285.375,yshift=-3.75] (48.5,70.13) {[rounded corners]--(85.5,74) -- (131.5,88.93) }-- (145.5,170) -- (48.5,170) --  cycle ;
\draw[xshift=-2.625,yshift=-3.75]   (500.5,102.93) -- (517.7,102.93) -- (517.7,170) -- (432.5,170) -- (432.5,133.5) -- (432.5,84.93) -- (500.5,84.93) -- cycle ;

\draw[->,xshift=285.375,yshift=-3.75]    (132.5,35) -- (132.5,86) ;
\draw[xshift=288] (127,20) node [anchor=north west][inner sep=0.75pt]  [font=\footnotesize]  {$C_{-v}$};

\draw[->,xshift=285.375,yshift=-3.75]    (48.5,25.0) -- (48.5,188) ;
\draw[->,xshift=285.375,yshift=-3.75]    (185,170) -- (35,170) ;
\draw [fill=black,xshift=285.375,yshift=-3.75](48.5,170) circle (1.5) ;
\draw[xshift=285.375,yshift=-3.75] (30,174) node [anchor=north west][inner sep=0.75pt]    {$O$};

\draw[xshift=285.375,yshift=-3.75] (93,172.5) node [anchor=north west][inner sep=0.75pt]    {$A\cap B$};

\end{tikzpicture}
  \caption{The picture illustrate the construction of $A,B$. $\Sigma$ is the background blue set.}
    \label{ConstructionofC}
    \end{center}
\end{figure}

Then we have the Mayer-Vietoris triangle, 
\[\Gamma\mathcal{E}({\Sigma\setminus O } )\rightarrow \Gamma\mathcal{E}({A\setminus O } ) \oplus \Gamma\mathcal{E}({B\setminus O } )  \rightarrow \Gamma\mathcal{E}({(A\cap B)\setminus O } ) \xrightarrow{+1}.\]
Next, we apply the $\text{Ext}^{*-d}_{\bZ/\ell}\left(-,{\bF_{p_\ell}}\right)\cong\text{Ext}^{*}_{\bZ/\ell}\left(-,{\bF_{p_\ell}}[-d]\right)$ to obtain a long exact sequence
\begin{align}\label{exact sequence}
    \text{Ext}^{*-d}_{\bZ/\ell}\left(\Gamma\mathcal{E}({(A\cap B)\setminus O } ),{\bF_{p_\ell}}\right)\rightarrow\,
    \begin{gathered}
        \text{Ext}^{*-d}_{\bZ/\ell}\left(\Gamma\mathcal{E}({A\setminus O } ),{\bF_{p_\ell}}\right)\\\bigoplus\\ \text{Ext}^{*-d}_{\bZ/\ell}\left(\Gamma\mathcal{E}({B\setminus O } ),{\bF_{p_\ell}}\right)
    \end{gathered}
    \rightarrow  \text{Ext}^{*-d}_{\bZ/\ell}\left(\Gamma\mathcal{E}({\Sigma\setminus O } ),{\bF_{p_\ell}}\right)\xrightarrow{+1}.
\end{align}

By our construction \eqref{construcionofAB}, we have
\begin{itemize}[fullwidth]
    \item $|\partial J_A|=1$. Hence we can apply the result of (1). So that $\text{Ext}^{*-d}_{\bZ/\ell}\left(\Gamma\mathcal{E}(A\setminus O  ),{\bF_{p_\ell}}\right)$ is concentrated in $[-2I(A),-1]\subset [-2I(\Sigma),-1]$.
    \item $|J_B|<|J_\Sigma|$. We can use the induction hypothesis, hence $\text{Ext}^{*-d}_{\bZ/\ell}\left(\Gamma\mathcal{E}({B \setminus O } ),{\bF_{p_\ell}}\right)$ is concentrated in $[-2I(B),-1]\subset [-2I(\Sigma),-1]$.
    \item $|J_{A\cap B}|<| J_\Sigma|$, since $J_{A\cap B} \subset J_A$ but $v\notin J_{A\cap B}$. Then we can use the induction hypothesis, that $\text{Ext}^{*-d}_{\bZ/\ell}\left(\Gamma\mathcal{E}({(A\cap B)\setminus O } ),{\bF_{p_\ell}}\right)$ is concentrated in $[-2I(A\cap B),-1]$. Moreover, in $J_A$, $v$ is the only lattice point such that $I(-v)=I(\Sigma)$, then for all $v'\in J_{A\cap B}\subset J_A\setminus \{v\}$, we have $I(-v')<I (\Sigma)$. Then $|I(A\cap B)| < I (\Sigma)$, and $[-2I(A\cap B),-1]\subset [-2I(\Sigma)+2,-1]$.
\end{itemize}
\end{enumerate}

Therefore, it follows from the long exact sequence \eqref{exact sequence} that $\text{Ext}^{*-d}_{\bZ/\ell}\left(\Gamma\mathcal{E}({\Sigma\setminus O } ),{\bF_{p_\ell}}\right)$ is concentrated in $[-2I(\Sigma),-1]$ and finite dimensional.
\end{proof}

Now, we are in the position to prove \autoref{structure toric domain module}. 
\begin{proof}[{\bf Proof of \autoref{structure toric domain module}}]
The equation \eqref{stalks of F of toric domain} says that $(F_{\ell}(X_{\Omega},\bF_{p_\ell}))_T \cong \Gamma\mathcal{E}(\Omega_T^\circ).$
Now, consider the inclusion sequence $\{O\}\subset \overline{ZO}\subset \Omega_T^\circ$ of closed sets. Then we have a commutative diagram:
\begin{center}
\begin{tikzcd}
\Gamma\mathcal{E}(\Omega_T^\circ) \arrow[rd] \arrow[r] & \Gamma\mathcal{E}(\overline{ZO}) \arrow[d] \arrow[r, "\cong"] & {\bF_{p_\ell}[-d-2I(Z)]} \arrow[d, "k_Z u^{I(Z)}"] \\
                                                       & \Gamma\mathcal{E}(O) \arrow[r, "\cong"]                       & {\bF_{p_\ell}[-d]}                                
\end{tikzcd}
\end{center}
By definition, the inclined arrow compose with the bottom isomorphism gives the fundamental class $\eta^{\bZ/\ell}_T(X_\Omega,\bF_{p_\ell})$. The terms $\Gamma\mathcal{E}(\overline{ZO})$ and $\Gamma\mathcal{E}({O})$ are computed using \autoref{lattice description} and \eqref{stronger propagating iso}. \autoref{lattice description} also shows that the vertical morphism is $k_Zu^{I(Z)}$, where $k_Z$ is a constant only depends on $Z$. We absorb the constant into the horizontal arrow, then we call it $\Lambda_{Z,\ell}$. Therefore, the commutative diagram induces a decomposition $\eta^{\bZ/\ell}_T(X_{\Omega},\bF_{p_\ell})=u^{I(Z)}\Lambda_{Z,\ell}$.

Now, let us embed the fundamental class $\eta^{\bZ/\ell}_T(X_\Omega,\bF_{p_\ell})$ into the excision triangle (the triangle \eqref{excision sequence toric} for $\Sigma={\Omega}^\circ_T$)
\begin{equation}\label{dt1}
\Gamma\mathcal{E}({{\Omega}^\circ_T\setminus O} ) \rightarrow \Gamma\mathcal{E}({{\Omega}^\circ_T} )\xrightarrow{\eta^{\bZ/\ell}_T(X_\Omega,\bF_{p_\ell})}\Gamma\mathcal{E}({ O } ) \xrightarrow{+1}.    
\end{equation}

So, after applying $\RHOM_{\bZ/\ell}(-,\bF_{p_\ell}[-d])$, we get the distinguished triangle:
\begin{equation}\label{tautological dt for X_Omega}
 \textnormal{R}\Gamma(V,\omega_V^{\bZ/\ell})\rightarrow C^{\bZ/\ell}_T(X_\Omega,\bF_{p_\ell}) \xrightarrow{\RHOM_{\bZ/\ell}(\eta^{\bZ/\ell}_T(X_{\Omega},\bF_{p_\ell}),\bF_{p_\ell}[-d])} \RHOM_{\bZ/\ell}(\Gamma\mathcal{E}({{\Omega}^\circ_T\setminus O} ),\bF_{p_\ell}[-d])\xrightarrow{+1}.
\end{equation}
Here $V\cong \bR^d$ and it is equipped with the trivial $\bZ/\ell$-action. Taking cohomology for the distinguished triangle, we get a long exact sequence of the Chiu-Tamarkin cohomology:
\begin{equation}\label{long exact seq for X_Omega}
 H^*_{\bZ/\ell}(V,\bF_{p_\ell})\rightarrow H^*C^{\bZ/\ell}_T(X_\Omega,\bF_{p_\ell}) \xrightarrow{\textnormal{Ext}_{\bZ/\ell}^{*-d}(\eta^{\bZ/\ell}_T(X_{\Omega},\bF_{p_\ell}),\bF_{p_\ell})} \textnormal{Ext}_{\bZ/\ell}^{*-d}(\Gamma\mathcal{E}({{\Omega}^\circ_T\setminus O} ),\bF_{p_\ell})\xrightarrow{+1}.
\end{equation}
When $0\leq T <p_\ell/\|\Omega^\circ_1\|_{\infty}$, we have $\Omega^\circ_T\subset (-p_\ell\mathbbm{1},O]$. 

Then we can apply \autoref{minimaldegree},  $ \textnormal{Ext}_{\bZ/\ell}^{*-d}(\Gamma\mathcal{E}({{\Omega}^\circ_T\setminus O} ),\bF_{p_\ell})$ is a finite dimensional graded $\bF_{p_\ell}$ vector space which is concentrated in degrees $[-2I(\Omega^\circ_T),-1]$. Then, it is torsion as a $\bF_{p_\ell}[u]$-module.

One the other hand, $H^*_{\bZ/\ell}(V,\bF_{p_\ell})\cong \textnormal{Ext}_{\bZ/\ell}^*(\bF_{p_\ell}[-d],\bF_{p_\ell}[-d]) \cong {\bF_{p_\ell}}[u,\theta]$ is concentrated in $[0,\infty)$. 

Therefore, after tensoring with $\bF_{p_\ell}((u))$, $\textnormal{Ext}_{\bZ/\ell}^{*-d}(\eta^{\bZ/\ell}_T(X_{\Omega},\bF_{p_\ell}),\bF_{p_\ell})\otimes_{\bF_{p_\ell}[u]}\bF_{p_\ell}((u))$ is an isomorphism of $\bF_{p_\ell}((u))$-vector spaces. Then, we conclude that $\textnormal{Ext}_{\bZ/\ell}^{*-d}(\eta^{\bZ/\ell}_T(X_{\Omega},\bF_{p_\ell}),\bF_{p_\ell})\neq 0$ and so $\eta^{\bZ/\ell}_T(X_{\Omega},\bF_{p_\ell})\neq 0$. Moreover, we have that $H^*C^{\bZ/\ell}_T(X_\Omega,\bF_{p_\ell})$ is a finitely generated $\bF_{p_\ell}[u]$ module whose rank is 2 and the torsion part of $H^*C^{\bZ/\ell}_T(X_\Omega,\bF_{p_\ell})$ is $\textnormal{Ext}_{\bZ/\ell}^{*-d}(\Gamma\mathcal{E}({{\Omega}^\circ_T\setminus O} ),\bF_{p_\ell})$. 

Now, for the $\bF_{p_\ell}[u]$ module $H^*C^{\bZ/\ell}_T(X_\Omega,\bF_{p_\ell})$, its free part is concentrated in $[0,\infty)$ and its torsion part are concentrated in degrees $[-2I(\Omega^\circ_T),-1]$. Then the minimal degree of $H^*C^{\bZ/\ell}_T(X_\Omega,\bF_{p_\ell})$ is at least $-2I(\Omega^\circ_T)$, and torsion elements of $H^*C^{\bZ/\ell}_T(X_\Omega,\bF_{p_\ell})$ only appear in degree $[-2I(\Omega^\circ_T),-1]$.

On the other hand, this estimate is sharp. Indeed, we take $Z\in \Omega_T^\circ$ such that $I(Z)=I(\Omega^\circ_T)$. Then the decomposition $\eta^{\bZ/\ell}_T(X_{\Omega},\bF_{p_\ell})=u^{I(Z)}\Lambda_{Z,\ell}$ shows that we have a degree equation: $0=2|I(Z)|+|\Lambda_{Z,\ell}|$. Then $|\Lambda_{Z,\ell}|=-2I(\Omega^\circ_T)$ realizes the minimal degree $-2I(\Omega^\circ_T)$. 

For the ellipsoid $E(a)=X_{\Omega_{E(a)}}$ (see \autoref{example toric domains}-(2)), let $Z=(-T/a_1,\dots,-T/a_d)$, then $\left(\Omega_{E(a)}\right)^\circ_T=\overline{ZO}$ is a segment. So, we can compute $H^*C^{\bZ/\ell}_T(X_\Omega,\bF_{p_\ell})$ directly from \autoref{lattice description}, and $\Lambda_{Z,\ell}$ we defined above induces an isomorphism of $A$ module. So $H^*C^{\bZ/\ell}_T(X_\Omega,\bF_{p_\ell})$ is torsion free as a $\bF_{p_\ell}[u]$-module. \end{proof}

\section{Contact Invariants}\label{section: contact inv}
I will explain how the Chiu-Tamarkin complex works for the contact geometry of (contact) admissible open sets in the prequantized space $T^*X\times S^1$. 

For any open set $\mathscr{U}\subset T^*X\times S^1$, we can lift it to a $\bZ$-invariant set $\widetilde{\mathscr{U}}\subset J^1X$ in the sense $T'_{k}(\widetilde{\mathscr{U}})=\widetilde{\mathscr{U}}$, where $T_k'(\bq,\bp,t)=(\bq,\bp,t+k)$ for $k\in \bZ$. In this way, we can discuss sheaves microsupported in $J^1X\setminus \Tilde{\mathscr{U}}$. Then $\mathcal{D}_{J^1X\setminus \widetilde{\mathscr{U}}}(X)$ and its left semi-orthogonal complement are all well-formulated. Specifically, for $\mathscr{Z}=J^1X\setminus \widetilde{\mathscr{U}}$, we define 
\begin{align*}
    \cD^c_\mathscr{Z}(X)=&\{F\in \cD(X): \mu s_L(F) \subset \mathscr{Z}\},\\
    \cD^c_{\mathscr{U}}(X)=&{^\perp}\cD^c_\mathscr{Z}(X), \text{ the left orthogonal complement of }\cD^c_\mathscr{Z}(X).
\end{align*}
Same as the symplectic case, we can define the notion of admissibility and microlocal kernels. To make it compatible with the Hamiltonian action of contact isotopy as we discussed in \autoref{GKSsection}, we will use composition functors rather than convolution functors. On the other hand, in the symplectic case, we require that microlocal kernels are objects in the Tamarkin category. Now, we need a (2-variable) variant version of the Tamarkin category for contact microlocal kernels. Let $\mathscr{D}(X^2)$ be the full triangulated subcategory $\{F\in D(X^2\times \bR^2): 
F \circ \bK_{\{t_2\geq t_1\}}\xrightarrow{\cong} F\}$ of $D(X^2\times \bR^2)$. Then we define 
\begin{Def}\label{contactdefadmissibledomains} We say ${\mathscr{U}}$ is {\em $\bK$-admissible} if there is a distinguished triangle 
\[ \sP_{\mathscr{U}}\rightarrow {\bK}_{\Delta_{X^2} \times\{t_2\geq t_1\}} \rightarrow \sQ_{\mathscr{U}} \xrightarrow{+1},\]
in $\mathscr{D}(X^2)$ such that the composition functor $\circ \sP_{\mathscr{U}} $ is right adjoint to $\mathcal{D}^c_{{\mathscr{U}}}(X) 
\hookrightarrow \mathcal{D}(X) $ and $\circ \sQ_{\mathscr{U}}$
is left adjoint to  $\mathcal{D}^c_{\mathscr{Z}}(X) 
\hookrightarrow \mathcal{D}(X) $, i.e., 
\[\mathcal{D}^c_{\mathscr{Z}}(X)  \xleftarrow{\circ \sQ_{\mathscr{U}}}\mathcal{D}(X) \xrightarrow{\circ \sP_{\mathscr{U}}}\mathcal{D}^c_{{\mathscr{U}}}(X),    \]
are two microlocal projectors.  

Such a pair of sheaves $(\sP_{\mathscr{U}},\sQ_{\mathscr{U}})$ together with the distinguished triangle give an orthogonal decomposition of $\mathcal{D}(X)$ by \autoref{semiorthcomplement}. We call the pair $(\sP_{\mathscr{U}},\sQ_{\mathscr{U}})$ {\em microlocal kernels} associated with ${\mathscr{U}}\subset T^*X\times S^1$, and the distinguished triangle as the defining triangle of ${\mathscr{U}}$. 

We say ${\mathscr{U}}$ is {\em admissible} if ${\mathscr{U}}$ is $\bZ$-admissible.
\end{Def}
The uniqueness and functoriality has the same proof, just need to replace convolution by composition. We have the existence of kernels for the prequantized open set $U\times S^1$ where $U\subset T^*X$ is a symplectic admissible open set. Precisely, we have the following proposition.

\begin{Prop}\label{product kernel in contact case}If $U\subset T^*X$ is (symplectic) admissible by the following distinguished triangle:
\[ P_U\rightarrow {\bK}_{\Delta_{X^2} \times\{t\geq 0\}} \rightarrow Q_U \xrightarrow{+1}.\]
Then $U\times S^1\subset T^*X\times S^1$ is (contact) admissible by the following distinguished triangle:
\[ \sP_{U\times S^1}\rightarrow {\bK}_{\Delta_{X^2} \times\{t_2\geq t_1\}} \rightarrow \sQ_{U\times S^1} \xrightarrow{+1},\]
where $\sP_{U\times S^1}=m^{-1}P_U$, $\sQ_{U\times S^1}=m^{-1}Q_U$ and $m(t_1,t_2)=t_2-t_1$. 
\end{Prop}
Notice that we have ${\bK}_{\Delta_{X^2} \times\{t_2\geq t_1\}}=m^{-1}{\bK}_{\Delta_{X^2} \times[0,\infty)}$.
\begin{proof}The second distinguished triangle comes from applying $m^{-1}$ to the first one and we have $m^{-1}F\in \mathscr{D}(X^2)$ for $F\in \cD(X^2)$. On the other hand, as we mentioned in (1) of \autoref{remark convolution and composition comparision}, we have
\[F\star P_U \cong F\circ \sP_{U\times S^1},\quad F\star Q_U\cong F\circ \sQ_{U\times S^1},\]
for $F\in \cD(X)$. Finally, as $\widetilde{U\times S^1}=U\times \bR$, we have that $\mu s_L(F)\subset J^1X\setminus \widetilde{U\times S^1}$ if and only if $\mu s (F)\subset T^*X\setminus U$. Then the result follows.
\end{proof}
Now, we can define the contact Chiu-Tamarkin complex for admissible open sets ${\mathscr{U}}\subset T^*X\times S^1$. As in the symplectic case, let us introduce the adjoint pair first:
\begin{center}
    \begin{tikzcd}
F\in D_{\bZ/\ell}((X^2\times \bR_t^2)^\ell) \arrow[rr, "\alpha^c_{\ell,T,X}", shift left] &  & D_{\bZ/\ell}(\pt) \ni G, \arrow[ll, "\beta^c_{\ell,T,X}"]
\end{tikzcd}
\end{center}
defined by:
\begin{align}\label{definition of adjoint loop functor contact}
\begin{aligned}
    &\alpha^c_{\ell,n,X}(F)=(i_{n}^\ell)^{-1}{\textnormal{R}}\pi_{\underline{\bq}!}   \Tilde{\Delta}_X^{-1}{\textnormal{R}}\widetilde{m}_!\left(F\right),\\
    &\beta^c_{\ell,n,X}(G)=\widetilde{m}^{!}\Tilde{\Delta}_{X*} \pi_{\underline{\bq}}^!i_{n*}^\ell G[-1], 
\end{aligned}
\end{align}
where
\begin{align*}
   &\widetilde{m}: (X^{2}\times \bR^{2})^\ell\rightarrow X^{2\ell}\times \bR^{\ell},\\
   &\widetilde{m}(\underline{\bq},t^1_1,t^2_1,\dots,t^1_\ell,t^2_\ell)=(\underline{\bq},t^2_\ell-t^1_1,t^2_1-t^1_2,\dots,t^2_{\ell-1}-t^1_{\ell});\\
    &\widetilde{\Delta}_X: X^\ell \times \bR^\ell\rightarrow X^{2\ell} \times \bR^\ell,\\
    &\widetilde{\Delta}_X(\bq_1,\dots,\bq_\ell,\underline{t})=(\bq_\ell,\bq_1,\bq_1,\dots,\bq_{\ell-1},\bq_{\ell-1},\bq_n,\underline{t});\\
    &\pi_{\underline{\bq}}: X^\ell \times \bR^\ell\rightarrow \bR^\ell;\\
    &i_{n}^\ell(\pt)=(n,\dots,n)\in \bR^\ell,
\end{align*}
where $\underline{\bq}=(\bq_1,\dots,\bq_\ell)$ and $\underline{t}=(t_1,\dots,t_\ell)$.
\begin{Def}With the notation above, for $\ell \in \bN$ and $n\in \bN_0$, we define the {\em contact Chiu-Tamarkin complex} as follows:
\begin{align*}
    \mathscr{C}^{\bZ/\ell}_{n\ell}({\mathscr{U}},\bK)&=\RHOM_{\bZ/\ell}\left(\alpha^c_{\ell,n,X}(\sP_{\mathscr{U}}^{\dboxtimes \ell}),\bK[-d]  \right)\\
    &\cong\RHOM_{\bZ/\ell}\left(\sP_{\mathscr{U}}^{\dboxtimes \ell},\beta^c_{\ell,n,X}\bK[-d]  \right).
\end{align*}
\end{Def}
Compare to the symplectic case, the parameter $T$ is replaced by a discrete parameter $T=n\ell$. First, let us compare $\mathscr{C}^{\bZ/\ell}_{n\ell}(U\times S^1,\bK)$ and $C^{\bZ/\ell}_{n\ell}(U,\bK )$ if $U\subset T^*X$ is symplectic admissible. By \autoref{product kernel in contact case}, the prequantized open set $U\times S^1$ is contact admissible.
\begin{Prop}\label{contact sympl comparision}For a symplectic admissible open set $U\subset T^*X$, for $\ell\in \bN$, $n\in\bN_0$, we have
\[C^{\bZ/\ell}_{n\ell}(U,\bK )\cong \mathscr{C}^{\bZ/\ell}_{n\ell}(U\times S^1,\bK).\]
\end{Prop}
\begin{proof}Since $\sP_{U\times S^1}\cong m^{-1}P_U$, we have
\[\sP_{U\times S^1}^{\dboxtimes \ell}\cong (m^{\ell})^{-1}P_U^\dboxtimes,\]
where $m^{\ell}(\underline{\bq},t^1_1,t^2_1,\dots,t^1_\ell,t^2_\ell)=(\underline{\bq},t^2_1-t^1_1,\dots,t_\ell^{2}-t_\ell^{1})$. Then we have 
\[\mathscr{C}^{\bZ/\ell}_{n\ell}(U\times S^1,\bK)\cong \RHOM_{\bZ/\ell}\left(P_U^{\dboxtimes \ell},m^\ell_*\beta^c_{\ell,n,X}\bK[-d]\right).\]
So, we only need to verify that 
\[m^\ell_*\beta^c_{\ell,n,X}\bK\cong \beta_{\ell,n\ell,X}\bK.\]
By proper base change, we only need to assume $X=\pt$ and then show that $m_*^\ell\widetilde{m}^!i_{n*}^\ell\bK[-1]\cong s_t^{\ell !}i_{n\ell*}\bK$.
Direct computation shows that both sides are isomorphic to $\bK_{\{(t_1,\cdots,t_\ell):t_1+\cdots+t_\ell=n\ell\}}[\ell-1]$.
\end{proof}

On the other hand, the constraint $T/\ell\in \bN_0$ is adapt to the problem of invariance. As the lifting of a contact isotopy is merely $\bZ$-equivariant, the sheaf quantization will only be $\bZ$-equivariant (see \autoref{contact isotopy quantization rmk}). So our discussion on invariance for symplectic version does not applies directly. However a slight modification for the proof of the symplectic invariance works.
\begin{Thm}[ {\cite[Theorem 4.7]{chiu2017}}]\label{contactinvariance1} Let ${\mathscr{U}},{\mathscr{U}}_1,{\mathscr{U}}_2$ be contact admissible open sets and let ${\mathscr{U}}_1\xhookrightarrow{i} {\mathscr{U}}_2$ be an inclusion. Then one has, for $\ell\in \bN$, $n\in \bN_0$,
\begin{enumerate}[fullwidth]
    \item There is a morphism $\mathscr{C}^{\bZ/\ell}_{n\ell}({\mathscr{U}}_2,\bK) \xrightarrow{i^*} \mathscr{C}^{\bZ/\ell}_{n\ell}({\mathscr{U}}_1,\bK)$, which is natural with respect to inclusions of admissible open sets.
    \item For a compactly supported contact isotopy $\varphi:I\times T^*X\times S^1 \rightarrow T^*X\times S^1$,. We have an isomorphism, in the equivariant category, $\Phi^{\bZ/\ell,c}_{z,n\ell}:\sC^{\bZ/\ell}_{n\ell}({\mathscr{U}},\bK) \xrightarrow{\cong} \sC^{\bZ/\ell}_{n\ell}\left(\varphi_z({\mathscr{U}}),\bK\right)$, for all $z\in I$. The isomorphism $\Phi^{\bZ/\ell,c}_{z,n\ell}$ is functorial with respect to restriction morphisms in (1). When ${\mathscr{U}}=T^*X\times S^1$, we have $\Phi^{\bZ/\ell,c}_{z,n\ell}=\id$.
\end{enumerate}
\end{Thm}
The proof for (1) is the same as the symplectic case. Let us present the proof for invariance, which is slightly different from the symplectic one. 
\begin{proof}[Proof of {\autoref{contactinvariance1} (2)}]For the contact isotopy $\varphi$, we take the GKS quantization $K(\widehat{\varphi'})$ as we discussed in \autoref{GKSsection}. Let $K=K(\widehat{\varphi'})_{z}$,  $K_{\ell}=K^{\dboxtimes \ell}$ and $K_{\ell}^{-1}=(K^{-1})^{\dboxtimes \ell}$.

Recall the proof of \autoref{invariance1} (2). In the contact case, we still have an isomorphism
\[ \sP_{\varphi_z({\mathscr{U}})} \cong K^{-1}\circ \sP_{\mathscr{U}} \circ K, \]
as well as the auto-equivalence $\kappa(F)\coloneqq K_{\ell}^{-1}\circ F \circ K_{\ell}$ of $D_{\bZ/\ell}((X\times \bR)^{2\ell})$. 

So, we only need to construct an isomorphism
\begin{equation}\label{proof contact inv. commutative.}
  \kappa(\beta^c_{n}\bK)=K_\ell^{-1}\circ \beta^c_{n}\bK \circ K_\ell \cong \beta^c_{n}\bK,  
\end{equation}
where $\beta^c_{n}=\beta^c_{\ell,n,X}$. As in \autoref{invariance1}, we only need to find an isomorphism 
\[\beta_n^c\bK \circ K_\ell \cong K_\ell \circ \beta_n^c\bK.\]
To emphasize the difference between the contact case and the symplectic case, let us present the construction precisely. Let $W=X\times\bR$, $f \colon W^\ell \to W^\ell$, $(w_1,\ldots,w_\ell) \mapsto (w_2,\ldots,w_\ell,w_1)$ where $w_i=(\bq_i,t_i)$ and $\tnT_{c}^\ell:W^\ell \rightarrow W^\ell$, $(w_1,\ldots,w_\ell)\mapsto (\tnT_{c}(w_1),\ldots,\tnT_{c}(w_\ell))$, where $c\in\bR$ and $\tnT_{c}(w_i)=\tnT_{c}(\bq_i,t_i)=(\bq_i,t_i+c)$. Set $Y=W^\ell $ and identify $Y^2 = (W^2)^\ell$ by $(w^1_1,\ldots,w^1_\ell, w^2_1,\ldots,w^2_\ell) \mapsto (w^1_1,w^2_1,\ldots,w^1_\ell,w^2_\ell)$. Then $\beta^c_{n}\bK$ is, up to orientation and shift, the
constant sheaf on the graph of the composition $f\circ \tnT^\ell_n=\tnT^\ell_n\circ f$. Precisely, we have \[\beta_n^c\bK \cong\bK_{\Gamma_f}\circ \bK_{\Gamma_{\tnT_{n}^\ell}} \circ E \cong E\circ \bK_{\Gamma_f}\circ \bK_{\Gamma_{\tnT_{n}^\ell}} ,\] where $E = \delta_{Y^2!}(\omega_Y)$, with $\omega_Y$ the dualizing sheaf and $\delta_{Y^2}$ the usual diagonal embedding. The relation $f\circ \tnT^\ell_n=\tnT^\ell_n\circ f$ implies $ \bK_{\Gamma_f}\circ \bK_{\Gamma_{\tnT_{n}^\ell}} \cong \bK_{\Gamma_{\tnT_{n}^\ell}}\circ\bK_{\Gamma_f}$. Moreover we have $E\circ- \cong -\circ E$.

Now we have the general fact $G \circ \bK_{\Gamma_g} \cong (\id_Y \times g )_!(G)$ for any $G$ and any map $g$.  This formula has the symmetric form $\bK_{\Gamma'_g} \circ G\cong (g\times \id_Y)_!(G)$ where $\Gamma'_g$ is the switched graph $\Gamma'_g =\{(g(y),y): y\in Y\}$. When $g$ is invertible, we have $\Gamma_{g^{-1}}= \Gamma'_g$. So we obtain 
\[K_{\ell} \circ \beta^c_n \bK \cong K_{\ell} \circ \bK_{\Gamma_{\tnT_{n}^\ell}}\circ \bK_{\Gamma_f}  \circ E \cong   (\id_Y \times f )_!(K_{\ell} \circ \bK_{\Gamma_{\tnT_{n}^\ell}})\circ E,\]
and
\[\beta^c_n\bK \circ K_\ell \cong E\circ \bK_{\Gamma_f}\circ \bK_{\Gamma_{\tnT_{n}^\ell}}\circ K_\ell=E\circ \bK_{\Gamma'_{f^{-1}}}   \circ \bK_{\Gamma_{\tnT_{n}^\ell}}\circ K_\ell\cong E \circ (f^{-1} \times \id_Y  )_!(\bK_{\Gamma_{\tnT_{n}^\ell}}\circ K_\ell).\]
Now, recall the GKS quantization $K(\widehat{\varphi'})$ satisfies the $\bZ$-equivariant condition \eqref{Z-equi for contact isotopy}, so the restriction on $z$-slices, $K=K(\widehat{\varphi'})_{z}$, also satisfies 
\[ K\circ \bK_{\Delta_{X^2}\times \{(t,t+n):t\in \bR\}} \cong \bK_{\Delta_{X^2}\times \{(t,t+n):t\in \bR\}}  \circ K.\]
Notice that $\Delta_{X^2}\times\{(t,t+n):t\in \bR\}=\Gamma_{\tnT_{n}}$ where $\tnT_{n}(x,t)=(x,t+n)$. Therefore we have
\[K_\ell \circ \bK_{\Gamma_{\tnT_{n}^\ell}}\cong \bK_{\Gamma_{\tnT_{n}^\ell}}\circ K_\ell.\]
Hence we have \[K_{\ell} \circ \beta^c_n \bK   \cong   (\id_Y \times f )_!(K_{\ell} \circ \bK_{\Gamma_{\tnT_{n}^\ell}})\circ E\cong E\circ (\id_Y \times f )_!( \bK_{\Gamma_{\tnT_{n}^\ell}}\circ K_{\ell} ).\]
Then the isomorphism \eqref{proof contact inv. commutative.} follows from \[(\id_Y \times f )_!( \bK_{\Gamma_{\tnT_{n}^\ell}}\circ K_{\ell} )\cong (f \times f )_!(f^{-1} \times \id_Y  )_!( \bK_{\Gamma_{\tnT_{n}^\ell}}\circ K_{\ell} )\cong (f^{-1} \times \id_Y  )_!( \bK_{\Gamma_{\tnT_{n}^\ell}}\circ K_{\ell} ).\]
The isomorphism $\Phi^{\bZ/\ell,c}_{z,n\ell}$ follows.
\end{proof}
\begin{RMK}The significant difference between $C^{\bZ/\ell}_T$ and $\mathscr{C}^{\bZ/\ell}_{n\ell}$ is that the definition of $\alpha^c$ also twist $t$ variables while $\alpha$ only twist $\bq$ variables. This is crucial for the contact invariance in \autoref{contactinvariance1}. However \autoref{contact sympl comparision} shows that when we consider the admissible sets of the form $U\times S^1$ for $U\subset T^*X$, the Chiu-Tamarkin complex itself is not affected by the difference. This is helpful for our computations.
\end{RMK}

Now, we assume $\ell\in \bO$ and ${\mathscr{U}}\subset T^*X\times S^1$ is admissible. Then $H^*\mathscr{C}^{\bZ/\ell}_{n\ell}({\mathscr{U}},\bK)$ is a module of $A=\textnormal{Ext}^*_{\bZ/\ell}(\bK,\bK)$. For an orientable manifold $X$ and a field $\bK$, the {\em fundamental class} $\eta^{\bZ/\ell,c}_{n\ell}({\mathscr{U}})$ is defined as the image of the fundamental class $[X]^{\bZ/\ell}=[X]\otimes 1$ under the morphism
$H_d^{BM}(X,\bK)\otimes \textnormal{Ext}^0_{\bZ/\ell}(\bK,\bK)\cong H^0\mathscr{C}^{\bZ/\ell}_{n\ell}(T^*X\times S^1,\bK) \xrightarrow{i_{\mathscr{U}}^*} H^0\mathscr{C}^{\bZ/\ell}_{n\ell}({\mathscr{U}},\bK)$. Similarly to \autoref{functorial fundamental class}, \autoref{contactinvariance1} shows that the fundamental class is preserved under inclusion and contact isotopy.

For the definition of capacities, it is reasonable to require a discrete spectrum. Let $\mathbb{P}$ denote the set of all prime numbers.
\begin{Def}For an admissible open set ${\mathscr{U}}\subset T^*X\times S^1$, $k\in \bN$. Define 
\begin{equation*} [\textnormal{Spec}]({\mathscr{U}},k) \coloneqq
\{p \in \mathbb{P}:\eta^{\bZ/p,c}_{p}({\mathscr{U}},\bF_{p})\in u^kH^{*}\sC^{\bZ/p}_{p}({\mathscr{U}},\bF_{p})\}
\end{equation*}
and
\[[c]_k({\mathscr{U}})\coloneqq \min [\textnormal{Spec}]({\mathscr{U}},k) \in \mathbb{P}.\]
For a general open set $O$, we define
\[[c]_k({\mathscr{U}})=\sup\{[c]_k(O): O\subset {\mathscr{U}},\,O\text{ is admissible}\}.\]
\end{Def}
Let us discuss the properties of $[c]_k$. The invariance and monotonicity are true with the same proof as in the symplectic case. The proof of representing property is invalid now. 
The positivity for open sets is obviously true by definition. However it is possible that $[c]_k$ is always $2$, which is treated as the trivial situation here. To avoid this situation, we must address some restrictions on the size of domains. Consider the constrain given by the structure theorem, we assume $\ell$ be a prime number; and moreover, the computation of ball indicate we should take $a>1$ as a necessary size constraint for $B_a\times S^1$. This fits into the framework of \cite{eliashberg2006geometry} that a small contact ball can be squeezed into smaller contact balls. Therefore, we define
\begin{Def}\label{big contact open sets}For an open set ${\mathscr{U}}\subset T^*\bR^d\times S^1$, we say it is {\em big} if there is a prequantized ball $B_a\times S^1 \xhookrightarrow{contact} {\mathscr{U}}$ such that $a>1$. 
\end{Def}
In summary, we organize our discussions as the following theorem. In the contact case, the spectrum sets could provide us more interesting obstructions. So we state results of spectrum sets as well.

\begin{Thm}\label{contact capacities} The functions $[c]_k:\text{Open}(T^*X\times S^1)\rightarrow \mathbb{P}$ satisfy the following:
\begin{enumerate}[fullwidth]
    \item $[c]_k \leq [c]_{k+1}$ and $[\textnormal{Spec}]({\mathscr{U}},k+1)\subset [\textnormal{Spec}]({\mathscr{U}},k)$, for all $k\in \bN$. 
    \item For two open sets ${\mathscr{U}}_1 \subset {\mathscr{U}}_2$, then $[c]_k({\mathscr{U}}_1) \leq [c]_k({\mathscr{U}}_2)$ and $[\textnormal{Spec}]({\mathscr{U}}_2,k)\subset [\textnormal{Spec}]({\mathscr{U}}_1,k)$.
    \item For a compactly supported contact isotopy $\varphi:I\times T^*X\times S^1 \rightarrow T^*X\times S^1$, we have $[c]_k({\mathscr{U}})=[c]_k(\varphi_z({\mathscr{U}}))$ and $[\textnormal{Spec}]({\mathscr{U}},k)= [\textnormal{Spec}](\varphi_z^H({\mathscr{U}}),k)$. 
    \item When $X=\bR^d$ and ${\mathscr{U}}\subset T^*\bR^d\times S^1$ is big, then it cannot happen that $[c]_k({\mathscr{U}})=2$ for all $k\in \bN$.
\end{enumerate}
\end{Thm}

Finally, let us discuss the prequantized toric domains, i.e., $X_{\Omega}\times S^1$ for a symplectic toric domain $X_{\Omega}$. We say that $X_{\Omega}\times S^1$ is convex if $X_{\Omega}$ is convex. 

Actually, we do not need to change the arguments much here because we already set everything up well. Using \autoref{contact sympl comparision}, we only need to slightly change the statement of the structural theorem.
\begin{Thm}\label{structure contact toric domian module}Let $X_\Omega \times S^1 \subset T^*V\times  S^1$ be a big prequantized convex toric domain (that means $\|\Omega^\circ_1\|_{\infty}<1$, see \autoref{big contact open sets}) and $\ell\in \bO$. If $n\in \bN_0$ and $n\ell\leq p_\ell/\|\Omega^\circ_1\|_{\infty}$, we have:
\begin{itemize}[fullwidth]

\item For each $Z\in \Omega^\circ_{n\ell}$, the inclusion $\overline{ZO} \subset \Omega^\circ_{n\ell}$ induces a decomposition $\eta^{\bZ/\ell,c}_{n\ell}(X_{\Omega}\times S^1,\bF_{p_\ell})=u^{I(Z)}\Lambda_{Z,\ell}$ for a non-torsion element $\Lambda_{Z,\ell}\in H^{-2I(Z)}\sC^{\bZ/\ell}_{n\ell}(X_\Omega\times S^1,\bF_{p_\ell})$. In particular, $\eta^{\bZ/\ell,c}_{n\ell}(X_{\Omega}\times S^1,\bF_{p_\ell})$ is non-zero.

\item The minimal cohomology degree of $H^*\sC^{\bZ/\ell}_{n\ell}(X_\Omega\times S^1,\bF_{p_\ell})$ is exactly $-2I(\Omega^\circ_{n\ell})$, i.e.,
    \[ H^*\sC^{\bZ/\ell}_{n\ell}(X_\Omega\times S^1,\bF_{p_\ell})\cong  H^{\geq -2I(\Omega^\circ_{n\ell})}\sC^{\bZ/\ell}_{n\ell}(X_\Omega\times S^1,\bF_{p_\ell}) \] 
    and
\[ H^{-2I(\Omega^\circ_{n\ell})}\sC^{\bZ/\ell}_{n\ell}(X_\Omega\times S^1,\bF_{p_\ell})\neq 0.  \] 
\item $H^*\sC^{\bZ/\ell}_{n\ell}(X_\Omega\times S^1,\bF_{p_\ell})$ is a finitely generated $\bF_{p_\ell}[u]$-module. The free part is isomorphic to $A=\bF_{p_\ell}[u,\theta]$, so $H^*\sC^{\bZ/\ell}_{n\ell}(X_\Omega\times S^1,\bF_{p_\ell})$ is of rank $2$ over $\bF_{p_\ell}[u]$. 

The torsion part is located in cohomology degree $[-2I(\Omega^\circ_{n\ell}),-1]$. $H^*\sC^{\bZ/\ell}_{n\ell}(X_\Omega\times S^1,\bF_{p_\ell})$ is torsion free when $X_\Omega$ is an open ellipsoid.
\end{itemize}
\end{Thm}

\begin{Thm}\label{computation of capacities of contact convex toric domian}For a big prequantized convex toric domain $X_\Omega \times S^1 \subsetneqq T^*V \times S^1$, we have:
\begin{align*}
[c]_k(X_\Omega\times S^1)=\min\left\{p \in \mathbb{P}: \,\exists  {z}\in \Omega_{p}^\circ, I( {z})\geq k\right\}
    =\min\left\{p\in\mathbb{P}: p \geq c_k(X_\Omega) \right\}.
\end{align*}
\end{Thm}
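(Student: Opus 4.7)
This follows the same pattern as \autoref{computation of capacities of convex toric domian}, substituting the contact structural result \autoref{structure contact toric domian module} for the symplectic one and setting $T=\ell$. The equality of the two displayed sets is a direct consequence of \autoref{computation of capacities of convex toric domian}: since $T\mapsto I(\Omega^\circ_T)$ is a non-decreasing, right-continuous, integer-valued step function and $c_k(X_\Omega)=\inf\{T\geq 0 : \exists z\in \Omega^\circ_T,\, I(z)\geq k\}$, the condition $\exists z\in\Omega^\circ_\ell,\, I(z)\geq k$ is equivalent to $\ell\geq c_k(X_\Omega)$. Write $\ell^\star$ for the common minimum.

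For the inequality $[c]_k(X_\Omega\times S^1)\leq\ell^\star$, I show $\ell^\star\in[\textnormal{Spec}](X_\Omega\times S^1,k)$. Given $\ell\in\bO$ with $p_\ell\geq p_{\ell^\star}$ and $\ell/p_\ell<1/\|\Omega^\circ_1\|_\infty$, the monotonicity of $\ell\mapsto I(\Omega^\circ_\ell)$ together with the definition of $\ell^\star$ supplies a point $Z\in\Omega^\circ_\ell$ with $I(Z)\geq k$; the last bullet of \autoref{structure contact toric domian module} applied to the inclusion $\overline{OZ}\subset\Omega^\circ_\ell$ then produces a non-torsion class $\gamma_{Z,\ell}\in H^{-2I(Z)}(C_\ell(X_\Omega\times S^1,\bF_{p_\ell}))$ with $\eta_\ell=u^{I(Z)}\gamma_{Z,\ell}=u^k\cdot(u^{I(Z)-k}\gamma_{Z,\ell})$, and $\gamma_\ell=u^{I(Z)-k}\gamma_{Z,\ell}$ is the required class. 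Conversely, for $\ell<\ell^\star$ lying in the range $\ell/p_\ell<1/\|\Omega^\circ_1\|_\infty$, the definition of $\ell^\star$ forces $I(\Omega^\circ_\ell)<k$; the first bullet of \autoref{structure contact toric domian module} then makes the minimal cohomological degree of $H^*(C_\ell(X_\Omega\times S^1,\bF_{p_\ell}))$ equal to $-2I(\Omega^\circ_\ell)>-2k$, so $H^{-2k}=0$. Combined with the non-vanishing of $\eta_\ell$ in degree $0$ provided by the third bullet, the equation $\eta_\ell=u^k\gamma_\ell$ admits no solution, whence $\ell\notin[\textnormal{Spec}]$ and $[c]_k\geq\ell^\star$.

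The step I expect to cause the most trouble is the treatment of composite $\ell$ with $\ell/p_\ell\geq 1/\|\Omega^\circ_1\|_\infty$, which are unavoidable among the $\ell$ satisfying $p_\ell\geq p_{\ell^\star}$ (one may always multiply $\ell^\star$ by extra large prime factors). In this range the hypothesis $\Omega^\circ_\ell\subset Z_{p_\ell\mathbbm{1}}$ of \autoref{minimaldegree} fails, the $G$-action on the positive eigenspace of the quadratic form $Q_z$ may lose freeness, and \autoref{structure contact toric domian module} is not directly available. The technical heart of the upper-bound step will therefore consist either in showing that $\eta_\ell$ vanishes in this degenerate regime—so that $\gamma_\ell=0$ trivially satisfies the equation—or in refining the minimal-degree calculation of \autoref{minimaldegree} to the non-free lens-space quotients that appear when the $G$-action acquires nontrivial fixed loci on the relevant sphere.
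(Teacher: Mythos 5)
The paper offers no proof of this theorem at all: it is stated as the contact companion of \autoref{computation of capacities of convex toric domian}, to be read off from \autoref{structure contact toric domian module} in the same way, so your plan does coincide with the intended argument, and the difficulty you isolate in your final paragraph is precisely the point the paper leaves unaddressed. The quantifier in $[\textnormal{Spec}](U,k)$ runs over all odd $\ell$ with $p_\ell\geq p_{\ell_0}$ (and, to be consistent with the paper's numerical table, also $\ell\geq \ell_0$); among these there are always composite $\ell$ with $\ell/p_\ell\geq 1/\|\Omega^\circ_1\|_\infty$, and for those \autoref{structure contact toric domian module} --- whose statement tacitly carries the hypothesis $\ell/p_\ell<1/\|\Omega^\circ_1\|_\infty$ inherited from \autoref{structure toric domian module}, even though it is not written --- says nothing. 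So your proposal is an honest reduction to a genuinely missing step, not a complete proof, and nothing in the paper supplies that step.

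The gap is worse than your plan suggests, because it also invalidates the lower bound, which you present as finished: your ``Conversely'' paragraph only rules out $\ell_0<\ell^\star$ lying in the good range, and a bad $\ell_0<\ell^\star$ is left untouched. Moreover the two halves make incompatible demands on the bad regime. For an ellipsoid the isomorphism ${\textnormal{R}}\Gamma_c(\bR^d,({\mathcal{E}_{\ell}}^{\boxtimes d})_{\overline{OZ}})\cong\bK[-d-2I(Z)]$, hence $H^*(C_\ell(E\times S^1,\bF_{p_\ell}))\cong u^{-I(\Omega^\circ_\ell)}\bF_{p_\ell}[u,\theta]$, holds for \emph{every} odd $\ell$ (the proper contraction of $\prod_i\mathcal{W}_i(Z_i)$ onto $\bR^d\times\bC^{I(Z)}$ uses no hypothesis on $p_\ell$); since $u^k\colon H^{-2k}\to H^0$ is then surjective whenever $I(\Omega^\circ_\ell)\geq k$, the class $\eta_\ell$ --- whatever it is --- admits the decomposition for every odd $\ell\geq c_k(E)$, good or bad. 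Taking $E=B_{2.6}\subset\bR^2$ and $k=8$ gives $c_8=20.8$, so every odd $\ell\geq 21$ admits the decomposition and $21\in[\textnormal{Spec}]$, whence $[c]_8\leq 21$; but both displayed formulas give $23$, the smallest \emph{good} $\ell\geq 20.8$, since $21=3\cdot 7$ has $21/3=7\geq 2.6$. Your first proposed fix (vanishing of $\eta_\ell$ in the bad regime, which an equivariant Euler-class computation indeed supports: $\eta_\ell$ factors through multiplication by $\prod_i\prod_{j=1}^{\lfloor -Z_i\rfloor}(j\bmod p_\ell)\cdot u^{I(Z)}$) only sharpens the problem, as it makes every bad $\ell$ satisfy the defining condition of $[\textnormal{Spec}]$ vacuously. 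So the deferred step is not a technicality: either $[\textnormal{Spec}]$ must be re-read as quantifying only over $\ell$ with $\ell/p_\ell<1/\|\Omega^\circ_1\|_\infty$, or the stated formula must admit the bad competitors in $[c_k(X_\Omega),\ell^\star)$; as written, the proof cannot be completed and the statement itself needs to be pinned down first.
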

\begin{proof}If $p\in [\textnormal{Spec}](X_\Omega\times S^1,k) $, then ${p} <\frac{p}{\|\Omega^\circ_1\|_{\infty}} $ and we can use the structure theorem by the bigness condition. Then we use the minimal degree result of \autoref{structure contact toric domian module} to show that $I(z)\geq k$ for some $z\in \Omega_p^\circ$. In particular, we have $p\geq c_k(X_\Omega)$.

Conversely, if prime number $p$ satisfies the condition $p\geq c_k(X_\Omega)$. We can find a $z\in \Omega_p^\circ$, and the existence of decomposition of the fundamental class in \autoref{structure contact toric domian module} implies that $\eta^{\bZ/p,c}_{p}({X_\Omega\times S^1},\bF_{p})\in u^kH^{*}\sC^{\bZ/p}_{p}({X_\Omega\times S^1},\bF_{p})$.
\end{proof}
The result is much weaker than the symplectic case, while it is still interesting. For example, when we consider ellipsoids, we have 
\begin{align*}
    [c]_k(E(a)\times S^1)=&\min\left\{p \in \mathbb{P}: \sum_{i=1}^d \left\lfloor\frac{p}{a_i}\right\rfloor \geq k\right\}\\
    =&\min\left\{p \in \mathbb{P}: p \geq c_k(E(a)) \right\},
\end{align*}
where $a=(a_1,\dots,a_d)$ and $1<a_1\leq a_2\leq\cdots\leq a_d$.

\bibliographystyle{bingyu}
\clearpage
\phantomsection
\bibliography{bibtex}

\noindent
\parbox[t]{28em}
{\scriptsize{
\noindent
Bingyu Zhang\\
Centre for Quantum Mathematics, University of Southern Denmark\\
Campusvej 55, 5230 Odense, Denmark\\
Email: {bingyuzhang@imada.sdu.dk}
}}

\end{document}